\newtheoremstyle{mystyle}
  {}
  {}
  {}
  {}
  {\bfseries}
  {.}
  { }
  {}
\theoremstyle{mystyle}
\theoremstyle{definition}
\newtheorem{theorem}{Theorem}[section]
\newtheorem{proposition}{Proposition}[section]
\newtheorem{lemma}{Lemma}[section]
\newtheorem{corollary}{Corollary}[section]
\newtheorem{example}{Example}[section]
\newtheorem{definition}{Definition}[section]
\newtheorem{remark}{Remark}[section]
\numberwithin{equation}{section}
\begin{document}
\title[GICAR algebras and dynamics on DPP]{GICAR algebras and dynamics on determinantal point processes: discrete orthogonal polynomial ensemble case}
\author[R. Sato]{Ryosuke SATO}
\address{Department of Physics, Factualy of Science and Engineering, Chuo Univsersity, Kasuga, Bunkyo-ku, Tokyo 112-8551, Japan}
\email{r.sato@phys.chuo-u.ac.jp}

\begin{abstract}
    It is known that determinantal point processes have an intimate relation to operator algebras. In the paper, we extend this relationship to encompass dynamical aspects. Especially, we delve into two types of determinantal point processes. One is related to discrete orthogonal polynomials of hypergeometric type, and another is $z$-measures, which arise in the asymptotic representation theory of the symmetric groups. Within the framework of operator algebras, we investigate both unitary and stochastic dynamics on these point processes.
\end{abstract}

\maketitle

\allowdisplaybreaks{
\section{Introduction}
A point process on a (locally compact) space $\mathfrak{X}$ is a probability measure on the space $\mathcal{C}(\mathfrak{X})$ of (locally finite) point configurations in $\mathfrak{X}$. In the study of point processes, correlation functions play a role similar to that of moments in the study of random variables. Specially, they serve to characterize the statistical properties of point processes (see e.g., \cite{Lenard73}). If correlation functions are represented as determinants of a kernel defined on $\mathfrak{X}\times \mathfrak{X}$, then the point process is said to be \emph{determinantal}. Determinantal point processes arise in diverse branches of mathematics and physics, including random matrix theory, interacting particle systems, combinatorics, asymptotic representation theory, and so on. For surveys on these topics, see \cite{Soshnikov00, Lyons03, Johansson05, HKPV06, Borodin10, BG16} etc.

It is known that determinantal point processes have an intimate relation to operator algebras (see \cite{Lytvynov02,LM07, Olshanski20,Koshida21}). For arbitrary Hilbert space $\mathcal{H}$, a universal $C^*$-algebra with generators satisfying the canonical anti-commutation relation (CAR in short) over $\mathcal{H}$ is called the \emph{CAR algebra}, denoted by $\mathfrak{A}(\mathcal{H})$. The CAR algebra $\mathfrak{A}(\mathcal{H})$ admits a so-called gauge action by the torus group $\mathbb{T}$, and its fixed point subalgebra $\mathfrak{I}(\mathcal{H})$ is called the \emph{gauge invariant} (GI)CAR algebra. If $\mathcal{H}=\ell^2(\mathfrak{X})$ for a countable space $\mathfrak{X}$, then there exists a commutative $C^*$-subalgebra of $\mathfrak{I}(\mathcal{H})$ that is isomorphic to the $C^*$-algebra $C(\mathcal{C}(\mathfrak{X}))$ of continuous functions on $\mathcal{C}(\mathfrak{X})$. Hence, every state of $\mathfrak{I}(\mathcal{H})$ (i.e., positive normalized linear functional on $\mathfrak{I}(\mathcal{H})$) produces a point process on $\mathfrak{X}$. Particularly, so-called \emph{quasi-free} states of $\mathfrak{I}(\mathcal{H})$ induce determinantal point processes on $\mathfrak{X}$ (see Section \ref{sec:DPPGICAR}). Namely, there exists a correspondence from quasi-free states of $\mathfrak{I}(\mathcal{H})$ to determinantal point processes on $\mathfrak{X}$. However, there is no inverse correspondence. Olshanski \cite{Olshanski20} introduced a class of determinantal point processes, called \emph{perfect} measures, that are canonically associated with quasi-free states. In the paper \cite{Olshanski20}, a canonical representation of a GICAR algebra on the $L^2$-space with respect to a point process is discussed, and it is asked whether a vector state given by the constant function $\mathbbm{1}$ is quasi-free. On the other hand, by the so-called GNS (Gelfand--Naimark--Segal) construction, a quasi-free state of $\mathfrak{I}(\mathcal{H})$ provides a representation of $\mathfrak{I}(\mathcal{H})$ on a Hilbert space, and the quasi-free state can be described as a vector state given by a unit vector in the representation space. Hence, the $L^2$-space with respect to a perfect measure admits the GNS representation for the associated quasi-free state. In the paper, we also explore an isomorphism between the $L^2$-space with respect to a determinantal point process and the GNS representation space for the associated quasi-free state. Based on the isomorphism, we will apply an operator algebraic framework to study determinantal point processes.

The purpose of the paper is to extend the interaction between determinantal point processes and quasi-free states of GICAR algebras to encompass dynamical aspects. In several literature, many researchers investigate stochastic dynamics on determinantal point processes, i.e., stochastic dynamics on a configuration space with invariant measures given as determinantal point processes. For stochastic dynamics related to interacting particle systems (on discrete spaces), see \cite{SY02, CY09, LO13} etc. Moreover, for stochastic dynamics with representation-theoretic origin, we refer the reader to \cite{BO06b, Olshanski11, BO12, BO13, Petrov13} etc. In the paper, we will focus on a determinantal point process with the following properties:
\begin{itemize}
    \item A correlation kernel is given as a spectral projection of a self-adjoint operator.
    \item The $L^2$-space with respect to a determinantal point process is isomorphic to the GNS-representation space of the associated quasi-free state of a GICAR algebra.
\end{itemize}
We remark that the first property implies that a determinantal point process is provided from a quasi-free state of a GICAR algebra. Then, we can construct dynamics on the GICAR algebra by the self-adjoint operator in the first property, and dynamics on the $L^2$-space with respect to a determinantal point process can be constructed using the second property.

The organization of the paper is the following: In Section \ref{sec:DPPGICAR}, we summarize the basic facts of determinantal point processes on discrete spaces. We also introduce GICAR algebras and their quasi-free states. The relationship between determinantal point processes and GICAR algebra is also discussed. In Section \ref{sec:hyper_dop}, we present two types of determinantal point processes. One is related to discrete orthogonal polynomials of hypergeometric type, and another is $z$-measures, which arise in the asymptotic representation theory of the symmetric groups. One of their features is that their correlation kernels are given as spectral projections of self-adjoint difference operators. Moreover, $L^2$-spaces of these point processes are isomorphic to GNS-representation spaces of corresponding quasi-free states. These properties are used in Sections \ref{sec:unitary_dynamics} and \ref{sec:stochastic_dynamics} to construct dynamics on GICAR algebras, and they give rise to dynamics on determinantal point processes. See Propositions \ref{prop:second_quantization}, \ref{prop:constract_semigroup}. Based on the operator algebraic framework, we can describe the infinitesimal generators of these dynamics by creation and annihilation operators in CAR algebras. See Theorem \ref{thm:hamiltonian} and Corollaries \ref{cor:orth_poly_cre_ann}, \ref{cor:hyp_cre_ann}, \ref{cor:love}.

\section{Discrete point processes and GICAR algebras}\label{sec:DPPGICAR}
\subsection{Discrete point processes}
In the paper, we deal with point processes on only discrete spaces. Let $\mathfrak{X}$ be a countable set with the discrete topology, which is referred as state space. The set of (\emph{simple}) \emph{point configurations} $\mathcal{C}(\mathfrak{X})$ on $\mathfrak{X}$ is defined as $\mathcal{C}(\mathfrak{X}):=\{0, 1\}^\mathfrak{X}$ equipped with the product topology. Every $\omega=(\omega_x)_{x\in\mathfrak{X}}\in\mathcal{C}(\mathfrak{X})$ naturally represents a state that $x\in\mathfrak{X}$ is occupied by a particle if $\omega_x=1$; otherwise, $x$ is empty. A Borel probability measure on $\mathcal{C}(\mathfrak{X})$ is called a \emph{point process} on $\mathfrak{X}$. If the support of a point process is in $\mathcal{C}_N(\mathfrak{X}):=\{\omega\in\mathcal{C}(\mathfrak{X})\mid \sum_{x\in\mathfrak{X}}\omega_x=N\}$, then we call it a \emph{$N$-point process}. Let $M$ be a point process on $\mathfrak{X}$. For every $n\geq1$ we define the \emph{$n$-th correlation function} $\rho_M^{(n)}\colon\mathfrak{X}^n\to \mathbb{R}$ by
\[\rho_M^{(n)}(x_1, \dots, x_n):=\begin{cases}M(\{\omega\in\mathcal{C}(\mathfrak{X})\mid \omega_{x_i}=1\text{ for }i=1, \dots, n\})&x_i\text{'s are pairwise distinct},\\0&\text{otherwise.}\end{cases}\]
If there exits a function $K$ on $\mathfrak{X}\times \mathfrak{X}$ such that
\[\rho_M^{(n)}(x_1, \dots, x_n)=\det[K(x_i, x_j)]_{i, j=1}^n\]
holds for every $n\geq1$, then $M$ is said to be \emph{determinantal}, and $K$ is called a \emph{correlation kernel}. We remark that a correlation kernel is \emph{not} unique even if it exists.

We introduce an important example of determinantal point processes that relate to orthogonal polynomials. Let us assume that $\mathfrak{X}\subset \mathbb{R}$ and a function $w\colon \mathfrak{X}\to \mathbb{R}_{>0}$ has finite moments of orders $0, 1, \dots, 2N-2$ for some $N\geq1$. Namely, $\sum_{x\in\mathfrak{X}}x^nw(x)<\infty$ holds for $n=0, \dots, 2N-2$. By the assumption, we have
\begin{equation}\label{eq:normalized_constant}
    0<Z_{w, N}:=\sum_{\omega\in\mathcal{C}_N(\mathfrak{X})}V_N^2(\omega)\prod_{x\in\mathfrak{X}:\omega_x=1}w(x)<\infty,
\end{equation}
where $V_N^2(\omega):=\prod_{1\leq i<j\leq N}(x_i-x_j)^2$ and $x_1, \dots, x_N$ are $N$-points in $\mathfrak{X}$ satisfying $\omega_{x_i}=1$. We remark that $V_N(x_1, \dots, x_N):=\prod_{1\leq i<j \leq N}(x_i-x_j)$ is anti-symmetric in the order of $x_1, \dots, x_N$, but $V_N(x_1, \dots, x_N)^2$ is independent of their order. Hence $V_N^2(\omega)$ is well defined as a function on $\mathcal{C}_N(\mathfrak{X})$. The \emph{discrete orthogonal polynomial ensemble} $M_{w, N}$ (with weight $w$) is defined by
\[M_{w, N}(\omega):=\frac{1}{Z_{w, N}}V_N^2(\omega)\prod_{x\in\mathfrak{X}: \omega_x=1}w(x) \quad (\omega\in\mathcal{C}_N(\mathfrak{X})).\]
It is well known that $M_{w, N}$ is determinantal, and its correlation kernel is expressed by discrete orthogonal polynomials with respect to $w$. By the assumption on $w$, the functions $1, x, \dots, x^{N-1}$ on $\mathfrak{X}$ belong to $L^2(\mathfrak{X}, w)$. Hence, by orthogonalizing them, we obtain monic orthogonal polynomials $\tilde p_0(x)\equiv1, \tilde p_1(x), \dots, \tilde p_{N-1}(x)$. Moreover, let $p_n(x):=\tilde p_n(x) w(x)^{1/2}/\|\tilde p_n\|_{L^2(\mathfrak{X}, w)}$. Then $p_0(x), \dots, p_n(x)$ are orthonormal in $\ell^2(\mathfrak{X})$. We define a kernel $K^{CD}_{w, N}$ on $\mathfrak{X}\times \mathfrak{X}$ by
\begin{equation}\label{eq:CD}
    K^{CD}_{w, N}(x, y):=\sum_{n=0}^{N-1}p_n(x)p_n(y) \quad (x, y\in\mathfrak{X}),
\end{equation}
called the \emph{normalized Christoffel--Darboux kernel} with weight $w$. It is known that $K^{CD}_{w, N}$ is a correlation kernel of $M_{w, N}$ (see e.g., \cite[Lemma 2.8]{Konig}). We remark that the associated integral operator on $\ell^2(\mathfrak{X})$ is nothing but the orthogonal projection onto the linear span of $p_0(x), \dots, p_{N-1}(x)$.

\subsection{GICAR algebras and quasi-free states}\label{subsec:GICAR}
Here we provide a brief summary about CAR algebras, GICAR algebras, and their quasi-free states. See e.g. \cite{BratteliRobinson2, EK:book} for more details.

Let $\mathcal{H}$ be a separable complex Hilbert space. The \emph{CAR algebra } $\mathfrak{A}(\mathcal{H})$ is the universal unital $C^*$-algebra generated by $a(h)$ ($h\in \mathcal{H}$) such that the mapping $h\in \mathcal{H}\mapsto a^*(h):=a(h)^*\in\mathfrak{A}(\mathcal{H})$ is linear and the canonical anticommutation relation (CAR) holds true, that is,
\[a(h)a(k)+a(k)a(h)=0, \quad a^*(h)a(k)+a(k)a^*(h)=\langle h, k\rangle 1\quad (h, k\in \mathcal{H}),\]
where the inner product is linear in the first argument. By the universality of $\mathfrak{A}(\mathcal{H})$, a unitary operator $U\colon \mathcal{H}\to \mathcal{H}$ provides the $*$-automorphism $\gamma_U$ on $\mathfrak{A}(\mathcal{H})$, called the \emph{Bogoliubov transform}, by $\gamma_U(a(h))=a(Uh)$ for every $h\in\mathcal{H}$. In particular, the so-called \emph{gauge action} $\gamma\colon \mathbb{T}\curvearrowright\mathfrak{A}(\mathcal{H})$ is defined as $\gamma_\lambda:=\gamma_{\lambda1}$ for any $\lambda\in \mathbb{T}$. The fixed point subalgebra
\[\mathfrak{I}(\mathcal{H}):=\{x\in\mathfrak{A}(\mathcal{H})\mid \gamma_\lambda(x)=x\text{ for all }\lambda\in\mathbb{T}\}\]
is called the \emph{gauge invariant} CAR algebra (the GICAR algebra in short). We remark that $\mathfrak{I}(\mathcal{H})$ is a $C^*$-subalgebra of $\mathfrak{A}(\mathcal{H})$ and generated by elements of the form $a^*(h)a(k)$ for any $h, k\in\mathcal{H}$.

The most fundamental representation of $\mathfrak{A}(\mathcal{H})$ is defined on the \emph{anti-symmetric Fock space} $\mathcal{F}_a(\mathcal{H}):=\bigoplus_{n=0}^\infty\bigwedge^n\mathcal{H}$, where $\bigwedge^n\mathcal{H}$ is the $n$-th anti-symmetric tensor product of $\mathcal{H}$ and $\bigwedge^0\mathcal{H}$ is the linear span of a distinguished vector $\Omega$, called a \emph{vacuum} vector. The \emph{Fock representation} $(\pi, \mathcal{F}_a(\mathcal{H}))$ of $\mathfrak{A}(\mathcal{H})$ is defined by
\[\pi(a^*(h))\Omega:=h, \quad \pi(a^*(h))h_1\wedge \cdots \wedge h_n:=h\wedge h_1\wedge \cdots \wedge h_n,\]
or equivalently
\[\pi(a(h))\Omega:=0, \quad \pi(a(h))h_1\wedge \cdots \wedge h_n:=\sum_{i=1}^n(-1)^{i-1}\langle h_i, h\rangle h_1\wedge \cdots \check{h_i}\cdots \wedge h_n,\]
where $\check{h_i}$ means that $h_i$ is omitted. We remark that $\mathfrak{A}(\mathcal{H})$ and $\pi(\mathfrak{A}(\mathcal{H}))$ are $*$-isomorphic. In what follows, $\mathfrak{A}(\mathcal{H})$ is identified with $\pi(\mathfrak{A}(\mathcal{H}))$, and we use the same symbol $a(h)$ and $a^*(h)$ to denote $\pi(a(h))$ and $\pi(a^*(h))$, respectively. The vector state $\varphi_0$ on $\mathfrak{A}(\mathcal{H})$ given by $\Omega$ is called the \emph{Fock state}, that is, the Fock state $\varphi_0$ on $\mathfrak{A}(\mathcal{H})$ is defined by $\varphi_0(x)=\langle x\Omega, \Omega\rangle$.

A state $\varphi$ on $\mathfrak{A}(\mathcal{H})$ is said to be \emph{quasi-free} if we have
\[\varphi(a^*(h_n)\cdots a^*(h_1)a(k_1)\cdots a(k_m))=\delta_{n, m}\det[\varphi(a^*(h_i)a(k_j))]_{i, j=1}^n\]
for every $h_1, \dots, h_n, k_1, \dots, k_m\in\mathcal{H}$ and $n, m\geq1$. By the Riesz representation theorem, there is a unique bounded linear operator $K$ on $\mathcal{H}$ such that $0\leq K\leq 1$ and $\varphi(a^*(h)a(k))=\langle Kh, k\rangle$ for every $h, k\in\mathcal{H}$. Conversely, for any positive contraction operator $K$ on $\mathcal{H}$ there is the associated quasi-free state, denoted by $\varphi_K$. In particular, if $K=0$, then the associated quasi-free state is nothing but the Fock state $\varphi_0$. We remark that a quasi-free state $\varphi_K$ is gauge invariant, that is, $\varphi_K\circ\gamma_\lambda=\varphi_K$ for any $\lambda\in\mathbb{T}$. Hence, we obtain a unique unitary representation $(\Gamma, \mathcal{F}_a(\mathcal{H}))$ of $\mathbb{T}$ such that for any $\lambda\in\mathbb{T}$ and $x\in\mathfrak{A}(\mathcal{H})$
\[\Gamma_\lambda\Omega=\Omega, \quad \Gamma_\lambda x = \gamma_\lambda(x)\Gamma_\lambda.\]
See e.g., \cite[Corollary 2.3.17]{BratteliRobinson1}. Here we recall an explicit realization of quasi-free states for given positive contraction operator $K$ on $\mathcal{H}$. Let $S:=(1-K)^{1/2}$ and $T:=K^{1/2}$. By the universality of $\mathfrak{A}(\mathcal{H})$, we obtain a unital $*$-homomorphism $\rho_K\colon \mathfrak{A}(\mathcal{H})\to B(\mathcal{F}_a(\mathcal{H})\otimes \mathcal{F}_a(\overline{\mathcal{H}}))$ by
\[\rho_K(a(h)):=a(Sh)\otimes \bar{\Gamma}+1\otimes a^*(\overline{T h}) \quad (h\in\mathcal{H}),\]
where $\Gamma:=\Gamma_{-1}$ and $\bar\Gamma$ is its conjugate operator. For any $h_1, \dots, h_n, k_1, \dots, k_m\in\mathcal{H}$ we have
\begin{align*}
     & \langle \rho_K(a^*(h_n)\cdots a^*(h_1)a(k_1)\cdots a(k_m))\Omega\otimes \bar\Omega, \Omega\otimes\bar\Omega\rangle                                 \\
     & =\langle \Omega\otimes \overline{Tk_1}\wedge \cdots \wedge\overline{Tk_n}, \Omega\otimes \overline{Th_1}\wedge \cdots \wedge\overline{Th_n}\rangle \\
     & =\delta_{n, m}\det[\langle \overline{Tk_j}, \overline{Th_i}\rangle]_{i, j=1}^n                                                                     \\
     & =\delta_{n, m}\det[\langle Kh_i, k_j\rangle]_{i, j=1}^n                                                                                            \\
     & =\varphi_K(a^*(h_n)\cdots a^*(h_1)a(k_1)\cdots a(k_m)).
\end{align*}
Thus, the GNS representation of $\varphi_K$ is given by (a subrepresentation of) $(\rho_K, \mathcal{F}_a(\mathcal{H})\otimes \mathcal{F}_a(\overline{\mathcal{H}}))$. The space of the GNS representation associated with $\varphi_K|_{\mathfrak{I}(\mathcal{H})}$ is given as follows:

\begin{lemma}\label{lem:GNS_space}
    The closed linear span of $\rho_K(\mathfrak{I}(\mathcal{H}))\Omega\otimes\bar\Omega$ coincides with $\mathcal{H}_K$ defined as
    \[\mathcal{H}_K:=\bigoplus_{n=0}^\infty\bigwedge^n\overline{\mathrm{Ran}}(S)\otimes \bigwedge^n\overline{\overline{\mathrm{Ran}}(T)},\]
    where $\overline{\mathrm{Ran}}(S)$ and $\overline{\mathrm{Ran}}(T)$ are the closures of the ranges of $S$ and $T$, respectively.
\end{lemma}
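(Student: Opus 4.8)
The plan is to establish the two inclusions $\mathcal{V}\subseteq\mathcal{H}_K$ and $\mathcal{H}_K\subseteq\mathcal{V}$ separately, where $\mathcal{V}$ denotes the closed linear span of $\rho_K(\mathfrak{I}(\mathcal{H}))\Omega\otimes\bar\Omega$; both rest on a single computation. Writing $\mathcal{H}_K=\bigoplus_{n\geq0}\mathcal{H}_K^{(n)}$ with $\mathcal{H}_K^{(n)}:=\bigwedge^n\overline{\mathrm{Ran}}(S)\otimes\bigwedge^n\overline{\overline{\mathrm{Ran}}(T)}$, I would first take the adjoint of the defining formula to get $\rho_K(a^*(h))=a^*(Sh)\otimes\bar\Gamma+1\otimes a(\overline{Th})$, and then expand the generator, using $\bar\Gamma^2=1$,
\begin{align*}
\rho_K(a^*(h)a(k))=\ &a^*(Sh)a(Sk)\otimes 1+a^*(Sh)\otimes\bar\Gamma a^*(\overline{Tk})\\
&+a(Sk)\otimes a(\overline{Th})\bar\Gamma+1\otimes a(\overline{Th})a^*(\overline{Tk}).
\end{align*}
Reading off the particle numbers in the two tensor factors, the four terms carry $\mathcal{H}_K^{(n)}$ into $\mathcal{H}_K^{(n)}$, $\mathcal{H}_K^{(n+1)}$, $\mathcal{H}_K^{(n-1)}$, and $\mathcal{H}_K^{(n)}$ respectively; in particular each generator preserves the diagonal bigrading, and the annihilation operators do not leave $\bigwedge^\bullet\overline{\mathrm{Ran}}(S)$ or $\bigwedge^\bullet\overline{\overline{\mathrm{Ran}}(T)}$. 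Since $\Omega\otimes\bar\Omega\in\mathcal{H}_K^{(0)}$, $\mathcal{H}_K$ is closed, and $\mathfrak{I}(\mathcal{H})$ is the closed span of products of generators $a^*(h)a(k)$, the continuity of $\rho_K$ yields the inclusion $\mathcal{V}\subseteq\mathcal{H}_K$.

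For the reverse inclusion I would prove by induction on $n$ that $\mathcal{H}_K^{(n)}\subseteq\mathcal{V}$, using first that $\mathcal{V}$ is invariant under $\rho_K(\mathfrak{I}(\mathcal{H}))$ (as $\rho_K$ is a homomorphism and $\mathcal{V}$ is $\rho_K(\mathfrak{I}(\mathcal{H}))$-cyclic for $\Omega\otimes\bar\Omega$). The base case $n=0$ is immediate. Assuming $\mathcal{H}_K^{(j)}\subseteq\mathcal{V}$ for all $j\leq n$, I would apply $\rho_K(a^*(h)a(k))$ to a simple vector $\xi\otimes\bar\eta\in\mathcal{H}_K^{(n)}\subseteq\mathcal{V}$; by the expansion above the result lies in $\mathcal{V}$ and splits as $w_{n+1}+w_n+w_{n-1}$ with $w_j\in\mathcal{H}_K^{(j)}$, the top component being the double creation $w_{n+1}=(-1)^{n+1}(Sh\wedge\xi)\otimes(\overline{Tk}\wedge\bar\eta)$ coming from the second term. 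Since $w_n\in\mathcal{H}_K^{(n)}$ and $w_{n-1}\in\mathcal{H}_K^{(n-1)}$ already lie in $\mathcal{V}$ by the induction hypothesis, subtraction shows $w_{n+1}\in\mathcal{V}$. Letting $h,k$ range over $\mathcal{H}$ and $\xi\otimes\bar\eta$ over $\mathcal{H}_K^{(n)}$, and using that $Sh$ (resp. $\overline{Tk}$) is dense in $\overline{\mathrm{Ran}}(S)$ (resp. $\overline{\overline{\mathrm{Ran}}(T)}$), I would conclude that these vectors span a dense subspace of $\mathcal{H}_K^{(n+1)}$, completing the induction and hence the inclusion $\mathcal{H}_K\subseteq\mathcal{V}$.

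The elementary input here is the standard fact that $\{u\wedge\zeta:u\in\mathcal{W},\ \zeta\in\bigwedge^n\mathcal{W}\}$ has closed linear span $\bigwedge^{n+1}\mathcal{W}$ for a separable Hilbert space $\mathcal{W}$, applied to $\mathcal{W}=\overline{\mathrm{Ran}}(S)$ and $\mathcal{W}=\overline{\overline{\mathrm{Ran}}(T)}$, combined with a density/continuity argument to replace $u$ by vectors of the form $Sh$ and $\overline{Tk}$. I expect the main obstacle to be purely bookkeeping: tracking the grading operators $\bar\Gamma$ and the complex-conjugation convention on $\overline{\mathcal{H}}$ carefully enough to be certain that the four terms land in exactly the claimed summands and that the annihilation operators never produce components outside $\mathcal{H}_K$. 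Once the diagonal bigrading and the range conditions are verified, both inclusions are forced by the induction above.
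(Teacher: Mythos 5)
Your proof is correct and follows essentially the same route as the paper: one inclusion from the invariance of $\mathcal{H}_K$ under $\rho_K(\mathfrak{I}(\mathcal{H}))$, the other by an induction on the particle number that uses $\rho_K(a^*(h)a(k))$ to raise the degree by one and absorbs the lower-degree contributions via the induction hypothesis. The paper only sketches that induction (it asserts the existence of elements $X_K(h_1,\dots,h_n;k_1,\dots,k_n)\in\mathfrak{I}(\mathcal{H})$ sending the cyclic vector to $Sh_1\wedge\cdots\wedge Sh_n\otimes\overline{Tk_1}\wedge\cdots\wedge\overline{Tk_n}$), so your expansion of the generator into its four graded components, the sign bookkeeping with $\bar\Gamma$, and the closing density argument supply exactly the details left implicit there.
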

\begin{proof}
    Since $\mathcal{H}_K$ is invariant under the $\rho_K(\mathfrak{I}(\mathcal{H}))$, we have $\rho_K(\mathfrak{I}(\mathcal{H}))\Omega\otimes\bar\Omega\subset \mathcal{H}_K$. Conversely, by induction, we can show that for any $n\geq1$ and $h_1, \dots, h_n, k_1, \dots, k_n\in \mathcal{H}$ there exists a $X_K(h_1, \dots, h_n; k_1, \dots, k_n)\in\mathfrak{I}(\mathcal{H})$ such that
    \[\rho_K(X_K(h_1, \dots, h_n;k_1, \dots, k_n))\Omega\otimes\bar\Omega=Sh_1\wedge\cdots\wedge Sh_n\otimes \overline{Tk_1}\wedge \cdots \wedge \overline{Tk_n}.\]
    Hence $\mathcal{H}_K$ is contained in $\overline{\rho_K(\mathfrak{I}(\mathcal{H}))\Omega\otimes\bar\Omega}$. Namely, they coincide with each other.
\end{proof}

\begin{remark}
    By Lemma \ref{lem:GNS_space}, $(\rho_K, \mathcal{H}_K, \Omega\otimes\overline{\Omega})$ is the GNS triple associated with $\varphi_K$. Namely, $\Omega\otimes\overline{\Omega}$ is a cyclic vector for $\rho_K(\mathfrak{I}(\mathcal{H}))$, and $\langle \rho_K(x)\Omega\otimes \overline{\Omega}, \Omega\otimes \overline{\Omega}\rangle=\varphi_K(x)$ holds for any $x\in\mathfrak{I}(\mathcal{H})$.
\end{remark}

\subsection{Diagonalization of GICAR algebras}
GICAR algebras (and also CAR algebras) belongs to the class of AF (approximately finite dimensional)-algebras. Here we briefly summarize about the diagonalization of AF-algebras in the sense of \cite[Chapter 1]{SV:book}, specifically focusing on GICAR algebras. From this perspective, (quasi-free) states of GICAR algebras can be related to (determinantal) point processes.

Let us fix $\{e_x\}_{x\in\mathfrak{X}}$ an orthonormal basis for $\mathcal{H}$, where $\mathfrak{X}$ is a countable index set. For any finite subset $\mathfrak{F}\Subset\mathfrak{X}$ we denote by $\mathcal{H}_\mathfrak{F}$ the linear span of $\{e_x\}_{x\in\mathfrak{F}}$ and define $\mathfrak{A}_\mathfrak{F}:=\mathfrak{A}(\mathcal{H}_\mathfrak{F})$ and $\mathfrak{I}_\mathfrak{F}:=\mathfrak{A}_\mathfrak{F}\cap \mathfrak{I}(\mathcal{H})$. Under the Fock representation, the gauge action $\gamma^\mathfrak{F}\colon\mathbb{T}\curvearrowright\mathfrak{A}_\mathfrak{F}$ is given as $\mathrm{Ad}\Gamma^\mathfrak{F}_\lambda$, where $\Gamma^\mathfrak{F}_\lambda$ ($\lambda\in\mathbb{T}$) is defined by
\[\Gamma^\mathfrak{F}_\lambda\Omega=\Omega, \quad \Gamma^\mathfrak{F}_\lambda h_1\wedge \cdots \wedge h_n:=(\lambda h_1)\wedge\cdots \wedge (\lambda h_n) \quad (h_1,\dots, h_n\in\mathcal{H}_\mathfrak{F}).\]
Hence we have $\mathfrak{I}_\mathfrak{F}=\Gamma^\mathfrak{F}(\mathbb{T})'=\bigoplus_{n=0}^{|\mathfrak{F}|} B(\bigwedge^n\mathcal{H}_\mathfrak{F})$.

For any $\underline{x}=\{x_1,\dots, x_k\}\subseteq \mathfrak{F}$ let $p_{\underline{x}}\in\mathfrak{A}_\mathfrak{F}$ be the projection operator onto $\mathbb{C}e_{x_1}\wedge \cdots \wedge e_{x_k}$. We define $C_\mathfrak{F}:=\bigoplus_{\underline{x}\subseteq \mathfrak{F}}\mathbb{C}p_{\underline{x}}$, which is a maximal abelian subalgebra of $\mathfrak{I}_\mathfrak{F}$ and $C_\mathfrak{F}\cong C(\mathcal{C}(\mathfrak{F}))$. For any $\underline{x}=\{x_1, \dots, x_k\}$ we define
\[q_{\underline{x}}:=a_{x_1}^*a_{x_1}\cdots a_{x_k}^*a_{x_k}=a_{x_k}^*\cdots a_{x_1}^*a_{x_1}\cdots a_{x_k},\]
where $a_{x_i}:=a(e_{x_i})$. Then $q_{\underline{x}}$ belongs to $C_\mathfrak{F}$ and associates with the characteristic function of
\[\{\omega=(\omega_x)_{x\in\mathfrak{X}}\in\mathcal{C}(\mathfrak{F})\mid \omega_{x_i}=1 \text{ for }i=1, \dots, n\}.\]
Thus, $C_\mathfrak{F}$ is generated by the $q_{\underline{x}}$ for any $\underline{x}\subseteq \mathfrak{F}$.

If $\mathfrak{F}\subseteq \mathfrak{G}\Subset\mathfrak{X}$, then $\mathcal{H}_\mathfrak{F}\subseteq \mathcal{H}_\mathfrak{G}$ and $\mathfrak{I}_\mathfrak{F}$ is identified with a subalgebra of $\mathfrak{I}_\mathfrak{G}$. Moreover, $\mathfrak{I}(\mathcal{H})$ is generated by $\bigcup_{\mathfrak{F}\Subset\mathfrak{X}}\mathfrak{I}_\mathfrak{F}$, and hence $\mathfrak{I}(\mathcal{H})$ is an AF-algebra. Since $C_\mathfrak{F}$ is generated by the $q_{\underline{x}}$ for any $\underline{x}\subseteq \mathfrak{F}$, we have $C_\mathfrak{F}\subseteq C_\mathfrak{G}$. Under the identifications $C_\mathfrak{F}\cong C(\mathcal{C}(\mathfrak{F}))$ and $C_\mathfrak{G}\cong C(\mathcal{C}(\mathfrak{G}))$, the embedding $C_\mathfrak{F}\subseteq C_\mathfrak{G}$ is the dual of the mapping $(\omega_x)_{x\in\mathfrak{G}}\in\mathcal{C}(\mathfrak{G})\mapsto (\omega_x)_{x\in\mathfrak{F}}\in \mathcal{C}(\mathfrak{F})$. Therefore,
\[\varinjlim C_\mathfrak{F}\cong \varinjlim C(\mathcal{C}(\mathfrak{F}))\cong C(\varprojlim\mathcal{C}(\mathfrak{F}))\]
holds. Here $\varprojlim\mathcal{C}(\mathfrak{F})$ is the projective limit of the $\mathcal{C}(\mathfrak{F})$ with respect to the above maps, and it is homeomorphic to $\mathcal{C}(\mathfrak{X})$. We remark that the inductive limits and projective limit are taken in the categories of (unital commutative) $C^*$-algebras and topological spaces, respectively.

By the observation, $C(\mathcal{C}(\mathfrak{X}))$ is $*$-isomorphic to $\varinjlim C_\mathfrak{F}\subset\mathfrak{I}(\mathcal{H})$. Thus, by the Riesz--Markov--Kakutani theorem, for any state $\varphi$ on $\mathfrak{I}(\mathcal{H})$ we obtain a Borel probability measure $M_\varphi$ on $\mathcal{C}(\mathfrak{X})$, that is, a point process $M_\varphi$ on $\mathfrak{X}$, such that
\[\rho^{(n)}(x_1, \dots, x_n)=\varphi(q_{\{x_1, \dots, x_n\}})=\varphi(a^*_{x_n}\cdots a^*_{x_1}a_{x_1}\cdots a_{x_n})\]
holds for any $n\geq1$ and $x_1, \dots, x_n\in\mathfrak{X}$. In particular, the point process $M_{\varphi_K}$ associated with a quasi-free state $\varphi_K$ is determinantal, and its correlation kernel is given by $K(x, y):=\langle Ke_x, e_y\rangle$.

\begin{remark}
    Any quasi-free states on $\mathfrak{I}(\mathcal{H})$ (or equivalently, any positive contraction operator on $\mathcal{H}$) provides determinantal point processes, but this correspondence is not one-to-one. More precisely, there is no inverse correspondence from determinantal point processes to quasi-free states in general.
\end{remark}

\section{Determinantal point processes and self-adjoint difference operators}\label{sec:hyper_dop}
In this section, we discuss discrete determinantal point processes, where correlation kernels are derived as spectral projections of self-adjoint difference operators. Olshanski extensively examined such determinantal point processes in \cite{Olshanski08}. In addition to his work, we present several facts from the general theory of discrete orthogonal polynomials of hypergeometric type.

First, we recall necessary facts on discrete orthogonal polynomials of hypergeometric type. See \cite[Chapter 2]{NSU} for more details. Throughout this section, a state space $\mathfrak{X}$ is assumed to be $\mathbb{Z}+a$ or $\mathbb{Z}_{\geq 0}+ a$ for some $a\in \mathbb{R}$. We define two difference operators $\Delta$ and $\nabla$ on $\mathfrak{X}$ by
\[\Delta y(x):=y(x+1)-y(x), \quad \nabla y(x):=y(x)-y(x-1)\]
for any function $y$ on $\mathfrak{X}$, where $y(x-1):=0$ when $x- 1\not\in\mathfrak{X}$. Let $\sigma(x)$ and $\tau(x)$ be polynomials with $\deg \sigma(x)\leq 2$ and $\deg \tau(x)\leq1$. We consider the following difference equation
\begin{equation}\label{eq:hypergeometric}
    \sigma(x)\Delta\nabla y(x)+\tau(x)\Delta y(x)+\lambda y(x)=0
\end{equation}
of hypergeometric type, where $\lambda$ is a constant. Let $m_n:=\tau'n+\sigma''n(n-1)/2$ for any $n\geq0$. If $\lambda = -m_0=0$, then a constant function is a solution of Equation \eqref{eq:hypergeometric}. Moreover, if $m_n\neq m_k$ for $k=0, \dots, n-1$, then there exists a polynomial $y_n(x)$ of degree $n$ that is a solution of Equation \eqref{eq:hypergeometric} with $\lambda=-m_n$.

We additionally assume that $w\colon\mathfrak{X}\to \mathbb{R}_{>0}$ satisfies that $\Delta[\sigma(x)w(x)]=\tau(x)w(x)$ and the following boundary condition:
\begin{itemize}
    \item $x^n\sigma(x)w(x)\to 0$ as $x\to \pm \infty$ for $n=0, 1, \dots, 2N-1$ when $\mathfrak{X}=\mathbb{Z}+a$,
    \item $a^n\sigma(a)w(a)=0$ and $x^nw(x)\to 0$ as $x\to \infty$ for $n=0, 1, \dots, 2N-1$ when $\mathfrak{X}=\mathbb{Z}_{\geq 0}+a$.
\end{itemize}
Then $(y_n)_{n=0}^{N-1}$ lie in $L^2(\mathfrak{X}, w)$ and they are orthogonal (see \cite[Section 2.3]{NSU}). In this case, we call the $y_n(x)$ discrete orthogonal polynomials \emph{of hyepergeometic type}.

Let $\sigma$, $\tau$, and $w$ be the same as above, and $\sigma$ is assumed to be that $\mu_x:=\sigma(x)>0$ on $\mathfrak{X}$. Then $\lambda_x:=\sigma(x)+\tau(x)=\sigma(x+1)w(x+1)/w(x)>0$ holds for any $x\in \mathfrak{X}$. We define a operator $\mathcal{D}$ on $L^2(\mathfrak{X}, w)$ by
\begin{align*}
    [\mathcal{D}f](x)
     & :=\sigma(x)\Delta\nabla f(x)+\tau(x)\Delta f(x)    \\
     & =\lambda_xf(x+1)-(\mu_x+\lambda_x)f(x)+\mu_xf(x-1)
\end{align*}
with domain $\mathrm{dom}(\mathcal{D}):=\{f\in L^2(\mathfrak{X}, w)\mid \mathcal{D}y\in L^2(\mathfrak{X}, w)\}$.

\begin{remark}\label{rem:birth_death_process}
    The operator $\mathcal{D}$ has the form of infinitesimal generators of (\emph{bilateral}) \emph{birth death processes} on $\mathfrak{X}$ (see e.g., \cite{Liggett:book2010}). The connection between birth death processes and orthogonal polynomials was investigated in \cite{Schoutens:book}, for instance.
\end{remark}

We also consider $D:=\sqrt{w}\mathcal{D}\sqrt{w}^{-1}$ on $\ell^2(\mathfrak{X})$ with $\mathrm{dom}(D):=\{f\in \ell^2(\mathfrak{X})\mid Df\in\ell^2(\mathfrak{X})\}$. Then we have
\begin{align}\label{eq:differene_operator}
    [Df](x)
     & =\lambda_x\sqrt{\frac{w(x)}{w(x+1)}}f(x+1)-(\mu_x+\lambda_x)f(x)+\mu_x\sqrt{\frac{w(x)}{w(x-1)}}f(x-1) \nonumber \\
     & =\sqrt{\mu_{x+1}\lambda_x}f(x+1)-(\mu_x+\lambda_x)f(x)+\sqrt{\mu_x\lambda_{x-1}}f(x-1)
\end{align}
since $\sigma(x+1)w(x+1)=(\sigma(x)+\tau(x))w(x)$.

Let $\{e_x\}_{x\in\mathfrak{X}}$ denote the canonical orthonormal basis for $\ell^2(\mathfrak{X})$ and $\ell^2_0(\mathfrak{X})$ the linear span of them. By definition, we have $\ell^2_0(\mathfrak{X})\subset \mathrm{dom}(D)$, and hence $D$ is densely defined.

The following two lemmas might be known to experts (see e.g., \cite{Kreer}), but we give the proofs for the reader's convenience.
\begin{lemma}
    The operator $D$ is closed symmetric.
\end{lemma}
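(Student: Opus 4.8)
My plan is to treat $D$ as the maximal operator attached to a real symmetric Jacobi (tridiagonal) expression and to dispose of closedness and symmetry separately. First I read off the matrix of $D$ from \eqref{eq:differene_operator}. Writing $\alpha_x:=\sqrt{\mu_{x+1}\lambda_x}\ge 0$ and $\beta_x:=\mu_x+\lambda_x\in\mathbb{R}$, and using $\sqrt{\mu_x\lambda_{x-1}}=\alpha_{x-1}$, we have
\[
[Df](x)=\alpha_x f(x+1)-\beta_x f(x)+\alpha_{x-1}f(x-1).
\]
Hence $\langle De_x,e_{x+1}\rangle=\alpha_x=\langle De_{x+1},e_x\rangle$ and $\langle De_x,e_x\rangle=-\beta_x$, so the matrix of $D$ is real and symmetric. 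On the finitely supported sequences $\ell^2_0(\mathfrak{X})$ this already yields $\langle Df,g\rangle=\langle f,Dg\rangle$, since all the sums involved are finite.

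For closedness I would use that a three-term expression is continuous coordinatewise. If $f_n\to f$ and $Df_n\to g$ in $\ell^2(\mathfrak{X})$, then $f_n\to f$ coordinatewise, so $[Df_n](x)\to[Df](x)$ for each fixed $x$; on the other hand $\ell^2$-convergence of $Df_n$ forces $[Df_n](x)\to g(x)$ coordinatewise. Therefore $[Df](x)=g(x)$ for all $x$, and since $g\in\ell^2(\mathfrak{X})$ we conclude $f\in\mathrm{dom}(D)$ and $Df=g$. This proves $D$ closed; it uses nothing beyond $f,g\in\ell^2$, and is the standard reason a maximal Jacobi operator is closed.

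The real content is symmetry on the full maximal domain, which I would approach through the discrete Green (Lagrange) identity. Summing the three-term expression over a window $\mathfrak{X}_L$ (the points with $|x|\le L$, or $a\le x\le L$ on the half-line) and setting $\Phi(x):=\alpha_x\big(f(x+1)\overline{g(x)}-f(x)\overline{g(x+1)}\big)$, the diagonal terms cancel and the interior telescopes, giving
\[
\sum_{x\in\mathfrak{X}_L}\big([Df](x)\overline{g(x)}-f(x)\overline{[Dg](x)}\big)=\Phi(L)-\Phi(a-1)
\]
on the half-line (the bilateral case produces instead $\Phi(L)-\Phi(-L-1)$). The lower boundary term carries the factor $\alpha_{a-1}=\sqrt{\mu_a\lambda_{a-1}}$, and the boundary condition on $w$ forces $\sigma(a)=\mu_a=0$, so it vanishes; thus only the edge at $+\infty$ survives (both edges $\pm\infty$ in the bilateral case). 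Since $f,g,Df,Dg\in\ell^2$, the left-hand side converges to $\langle Df,g\rangle-\langle f,Dg\rangle$, so $\lim_L\Phi(L)$ exists, and symmetry is precisely the assertion that this Wronskian limit is $0$.

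I expect the vanishing of the boundary Wronskian at infinity to be the main obstacle, and it cannot be dodged: since $D=(D|_{\ell^2_0})^{*}$, symmetry of $D$ is equivalent to $D=\overline{D|_{\ell^2_0}}$, i.e. to $D$ being in the limit-point case, i.e. to triviality of the deficiency spaces $\ker(D\mp i)$. Because $\mu,\lambda$ have degree $\le 2$ the off-diagonal weights grow like $\alpha_x=O(x^2)$, which is exactly the regime where the Carleman series $\sum_x\alpha_x^{-1}$ may converge and limit-circle behaviour is a priori not excluded; so no estimate based on coordinate decay of $f,g$ alone can kill $\Phi(L)$. I would instead establish the limit-point property directly, by showing that $Df=\pm i f$ admits no nonzero $\ell^2$ solution. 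Concretely I would analyse the $\ell^2$-solutions of the three-term recurrence via the asymptotics of its two linearly independent solutions, or --- exploiting that $\mathcal{D}$ is the generator of a bilateral birth--death process with birth and death rates $\lambda_x,\mu_x$ (Remark \ref{rem:birth_death_process}) --- invoke a Reuter-type non-explosion criterion, which for polynomially growing rates gives divergence of the relevant series and hence vanishing deficiency indices. Once the deficiency indices are zero, $\lim_L\Phi(L)=0$ for all $f,g\in\mathrm{dom}(D)$, and combining this with the closedness established above completes the proof that $D$ is closed and symmetric.
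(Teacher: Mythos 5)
Your closedness argument is correct and is exactly the coordinatewise-convergence argument the paper uses, and your reduction of symmetry on the maximal domain to the vanishing of the boundary Wronskian $\Phi(L)$ --- equivalently to $D=\overline{D_0}$, where $D_0:=D|_{\ell^2_0(\mathfrak{X})}$, equivalently to the limit-point property --- is accurate. The problem is that you never prove this decisive step. You rightly note that with $\deg\sigma\le 2$ the off-diagonal entries $\alpha_x=\sqrt{\mu_{x+1}\lambda_x}$ can grow like $x^2$, so the Carleman test $\sum_x\alpha_x^{-1}=\infty$ may fail and limit-circle behaviour is not excluded a priori; but you then only announce that you ``would'' rule it out, either by asymptotics of the two solutions of the recurrence or by a Reuter-type non-explosion criterion, without carrying out either. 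Neither route is routine here: in the quadratic regime the limit-point/limit-circle alternative genuinely depends on the interplay between $\mu_x=\sigma(x)$, $\lambda_x=\sigma(x+1)w(x+1)/w(x)$ and the decay of the weight $w$, and the blanket assertion that ``polynomially growing rates'' force divergence of the relevant series is not true in general --- degree two is precisely where $\sum_x\lambda_x^{-1}<\infty$ and counterexamples to essential self-adjointness of Jacobi matrices live. So the core content of the lemma, symmetry of $D$ on its maximal domain, is left as a plan rather than a proof.

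For comparison, the paper takes a more elementary and more direct route: it proves $D=\overline{D_0}$ by truncating $f\in\mathrm{dom}(D)$ to windows $[l,k]$ and writing $\|D_0(f_{[l,k]})-Df\|^2$ as the tail of $\|Df\|^2$ outside $[l,k]$ plus boundary terms of the form $|\sqrt{\mu_l\lambda_{l-1}}f(l-1)|^2$ and $|\sqrt{\mu_{k+1}\lambda_k}f(k)|^2$, and then concludes $D=\overline{D_0}\subset D_0^*=D^*$. Note that this confronts exactly the same obstruction you identified --- one must know that these weighted boundary values tend to zero for $f$ in the maximal domain --- so your diagnosis of where the difficulty sits is sound; but a correct submission has to actually control $\Phi(L)$ (or the truncation boundary terms) using the specific hypotheses on $\sigma$, $\tau$ and $w$, and that is precisely the part your proposal defers.
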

\begin{proof}
    To show that $D$ is closed, we suppose that $f_n\in\mathrm{dom}(D)$ and $Df_n$ converge to $f$ and $g$ as $n\to \infty$, respectively. Thus, for any $x\in\mathfrak{X}$ we have
    \begin{align*}
        g(x)
         & =\lim_{n\to\infty}\sqrt{\mu_{x+1}\lambda_x}f_n(x+1)-(\mu_x+\lambda_x)f_n(x)+\sqrt{\mu_x\lambda_{x-1}}f_n(x-1) \\
         & =\sqrt{\mu_{x+1}\lambda_x}f(x+1)-(\mu_x+\lambda_x)f(x)+\sqrt{\mu_x\lambda_{x-1}}f(x-1)                        \\
         & =[Df](x).
    \end{align*}
    Namely, $g=Df$, and hence $D$ is closed. Next, we show that $D$ is symmetric. Let $D_0$ denote the restriction of $D$ to $\ell^2_0(\mathfrak{X})$. For any $l, k\in\mathfrak{X}$ with $l<k$, we define $[l, k]:=\{x\in\mathfrak{X}\mid l\leq x\leq k\}$. For any $f\in \mathrm{dom}(D)$ we have
    \begin{align*}
        \|D_0(f_{[l, k]})-Df\|^2
         & =\sum_{x\in\mathfrak{X}\backslash[l, k]}|Df(x)|^2+|\sqrt{\mu_l\lambda_{l-1}}f(l-1)|^2+|\sqrt{\mu_{k+1}\lambda_k}f(k)|^2
        \to 0
    \end{align*}
    as $l\to -\infty$ and $k\to \infty$. Thus, the graph of $D$ is contained in the closure of the graph of $D_0$, i.e., $D=\overline{D_0}$, and hence $D_0^*=(\overline{D_0})^*=D^*$. On the other hand, $\langle D_0f, g\rangle=\langle f, D_0g\rangle$ holds for any $f, g\in \ell^2_0(\mathfrak{X})$. Thus, we have $D_0\subset D_0^*=D^*$ and $D\subset D^*$. Namely, $D$ is symmetric.
\end{proof}

\begin{lemma}\label{lem:negative_semidefinite}
    The operator $D$ is negative semidefinite, that is, $\langle Df, f\rangle\leq 0$ for any $f\in\mathrm{dom}(D)$
\end{lemma}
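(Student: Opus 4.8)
The plan is to exhibit the quadratic form $\langle Df, f\rangle$ as a manifestly nonpositive sum of squares obtained by a discrete summation by parts, proving it first for finitely supported $f$ and then extending to all of $\mathrm{dom}(D)$ by the closedness established in the previous lemma. The key algebraic input is a detailed-balance relation hidden in the hypotheses: from $\sigma(x+1)w(x+1) = (\sigma(x)+\tau(x))w(x) = \lambda_x w(x)$ together with $\mu_{x+1} = \sigma(x+1)$ one reads off
\[
w(x)\lambda_x = w(x+1)\mu_{x+1}, \qquad \sqrt{\mu_{x+1}\lambda_x} = \lambda_x\sqrt{\frac{w(x)}{w(x+1)}} = \mu_x\sqrt{\frac{w(x)}{w(x-1)}}\bigg|_{x\to x+1},
\]
for every $x\in\mathfrak{X}$. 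This symmetry of the off-diagonal coefficients in \eqref{eq:differene_operator} is exactly what made $D$ symmetric, and it is also what will drive the factorization below.

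Next, for $f\in\ell^2_0(\mathfrak{X})$ I would compute, using \eqref{eq:differene_operator} and combining the two off-diagonal contributions by the index shift $x\mapsto x+1$ (legitimate since $f$ has finite support),
\[
\langle D f, f\rangle = \sum_{x\in\mathfrak{X}}\Bigl(2\sqrt{\mu_{x+1}\lambda_x}\,\mathrm{Re}[f(x+1)\overline{f(x)}] - (\mu_x+\lambda_x)|f(x)|^2\Bigr).
\]
Substituting the identity above and completing the square, the diagonal weight reorganizes through $\mu_x = \lambda_{x-1}w(x-1)/w(x)$ (another form of detailed balance) to give
\[
\langle D f, f\rangle = -\sum_{x\in\mathfrak{X}}\lambda_x\left|\,f(x) - \sqrt{\frac{w(x)}{w(x+1)}}\,f(x+1)\,\right|^2 \le 0,
\]
since $\lambda_x>0$ on $\mathfrak{X}$. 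Equivalently, this is the statement that $D = \sqrt{w}\,\mathcal{D}\sqrt{w}^{-1}$ is unitarily equivalent (via $f\mapsto \sqrt{w}^{-1}f$) to the birth–death generator $\mathcal{D}$ on $L^2(\mathfrak{X}, w)$, whose Dirichlet form is $-\langle \mathcal{D}g, g\rangle_{L^2(\mathfrak{X},w)} = \sum_{x}w(x)\lambda_x|g(x+1)-g(x)|^2$. When $\mathfrak{X}=\mathbb{Z}_{\ge 0}+a$ the left endpoint causes no trouble: the boundary condition with $n=0$ forces $\sigma(a)w(a)=0$, hence $\mu_a=\sigma(a)=0$, so the death term at $a$ is absent and the telescoping closes consistently with the convention $f(a-1)=0$.

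Finally, I would remove the finite-support assumption. Because $D=\overline{D_0}$ by the preceding lemma, every $f\in\mathrm{dom}(D)$ admits $f_n\in\ell^2_0(\mathfrak{X})$ with $f_n\to f$ and $Df_n\to Df$; then $\langle Df_n, f_n\rangle\to\langle Df, f\rangle$ while each $\langle Df_n, f_n\rangle\le 0$, whence $\langle Df, f\rangle\le 0$. (Alternatively one may use the cut-offs $f_{[l,k]}$ that already appear in the previous proof.) I expect the only delicate point to be the bookkeeping in completing the square — matching the diagonal coefficient $\mu_x+\lambda_x$ against $\lambda_x + \lambda_{x-1}w(x-1)/w(x)$ — which is precisely where the detailed-balance relation enters; everything else is routine.
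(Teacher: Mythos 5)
Your proof is correct and follows essentially the same route as the paper: evaluate the quadratic form on finitely supported $f$, sum by parts and complete the square using $\sqrt{\mu_{x+1}\lambda_x}=\lambda_x\sqrt{w(x)/w(x+1)}$ (your summand $\lambda_x\bigl|f(x)-\sqrt{w(x)/w(x+1)}\,f(x+1)\bigr|^2$ is identical to the paper's $\bigl|\sqrt{\mu_{x+1}}f(x+1)-\sqrt{\lambda_x}f(x)\bigr|^2$), then pass to all of $\mathrm{dom}(D)$ via $D=\overline{D_0}$. The only cosmetic difference is that the paper retains an explicit nonpositive boundary term $-\mu_l|f(l)|^2$ at the left end of the support, whereas you absorb it by invoking $\mu_a=\sigma(a)=0$ from the boundary condition; either way the conclusion holds.
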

\begin{proof}
    For any $f\in \ell^2_0(\mathfrak{X})$ there exist $l, k\in\mathfrak{X}$ such that $l<k$ and the support of $f$ is contained in $[l, k]$. Then we have
    \begin{align*}
        \langle D_0f, f\rangle
        =\sum_{x=l}^k [D_0f](x)\overline{f(x)}
        =-\sum_{x=l}^k|\sqrt{\mu_{x+1}}f(x+1)-\sqrt{\lambda_x}f(x)|^2-\mu_l|f(l)|^2
        \leq 0.
    \end{align*}
    Namely, $D_0$ is negative semidefinite, and so is $D=\overline{D_0}$.
\end{proof}

The following statement was shown in \cite{Kreer} under a certain condition about $\mu_x$ and $\lambda_x$ ($x\in\mathfrak{X}$). Although this condition does not holds in our case (the case of hypergeometric type), we can show the following, by slightly modifying the proof of \cite[Lemma 1]{Kreer}:
\begin{proposition}\label{prop:compact_resolvent}
    The operator $D$ has a compact resolvent at some $\zeta>0$, i.e., $(D-\zeta)^{-1}$ is a compact operator. Moreover, $D$ is self-adjoint and there exists a orthonormal basis $(f_n)_{n=0}^\infty$ for $\ell^2(\mathfrak{X})$ such that $Df_n=m_nf_n$ for any $n\geq0$, where $0\geq m_0\geq m_1\geq \cdots$ and $m_n\to -\infty$ as $n\to \infty$.
\end{proposition}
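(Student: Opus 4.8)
The plan is to deduce all three assertions from a single construction: an explicit bounded inverse of $D-\zeta$ for a fixed $\zeta>0$. Recall from the two preceding lemmas that $D=\overline{D_0}$ is closed, symmetric and negative semidefinite, so $\zeta>0$ cannot be an eigenvalue of $D$, and, more importantly, a symmetric operator possessing a real point in its resolvent set is automatically self-adjoint. Thus if I can produce a bounded operator $R_\zeta$ on $\ell^2(\mathfrak{X})$ with $(D-\zeta)R_\zeta=I$ and $R_\zeta(D-\zeta)=I$ on $\mathrm{dom}(D)$, then simultaneously $\zeta\in\rho(D)$, $D$ is self-adjoint, and $R_\zeta=(D-\zeta)^{-1}$; if in addition $R_\zeta$ is Hilbert--Schmidt, the resolvent is compact. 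This is the route of \cite[Lemma 1]{Kreer}, and the whole difficulty is concentrated in building $R_\zeta$ as a Green operator for the three-term recurrence attached to $D$.

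The homogeneous equation $(D-\zeta)u=0$ is, by \eqref{eq:differene_operator}, the three-term recurrence $\sqrt{\mu_{x+1}\lambda_x}\,u(x+1)-(\mu_x+\lambda_x+\zeta)u(x)+\sqrt{\mu_x\lambda_{x-1}}\,u(x-1)=0$, whose coefficients are explicit in $x$ through $\mu_x=\sigma(x)$ and $\lambda_x=\sigma(x)+\tau(x)$. The first step is an asymptotic analysis of this recurrence at each end of $\mathfrak{X}$: I would show that at each end the solution space splits into a one-dimensional space of square-summable (recessive) solutions and a complementary space of non-summable (dominant) ones, producing recessive solutions $\phi_-$ and $\phi_+$ at the left and right ends, unique up to scalars. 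Because $D\le0$, the value $\zeta>0$ is not an eigenvalue, so $\phi_-$ and $\phi_+$ are linearly independent and their constant discrete Wronskian $W:=\sqrt{\mu_{x+1}\lambda_x}\,(\phi_-(x)\phi_+(x+1)-\phi_-(x+1)\phi_+(x))$ is nonzero. This asymptotic control of the recessive solutions is the step I expect to be the main obstacle: the growth of the rates $\mu_x,\lambda_x$ falls outside the regime treated in \cite[Lemma 1]{Kreer}, so the recurrence must be handled directly --- via a Poincar\'e--Perron/Birkhoff analysis of the characteristic ratios --- to pin down the decay rate of $\phi_\pm$. This is exactly the point where the proof of \cite[Lemma 1]{Kreer} is modified.

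With $\phi_\pm$ in hand I would define the Green operator $R_\zeta$ by the kernel $G_\zeta(x,y):=W^{-1}\phi_-(\min\{x,y\})\phi_+(\max\{x,y\})$ and verify the standard facts: the kernel solves the recurrence in $x$ away from the diagonal with the correct jump, so $(D-\zeta)R_\zeta g=g$ for all $g$, and $R_\zeta(D-\zeta)f=f$ for finitely supported $f$, hence for all $f\in\mathrm{dom}(D)$ by closedness. The remaining estimate is $\sum_{x,y\in\mathfrak{X}}|G_\zeta(x,y)|^2<\infty$, which follows from the decay rates of $\phi_\pm$ established in the previous step; it shows that $R_\zeta$ is Hilbert--Schmidt, hence bounded and compact. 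By the first paragraph, $D$ is therefore self-adjoint with $\zeta\in\rho(D)$ and compact resolvent.

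Finally, a self-adjoint operator with compact resolvent has purely discrete spectrum, so there is an orthonormal basis $(f_n)_{n\ge0}$ of $\ell^2(\mathfrak{X})$ of eigenvectors with real eigenvalues of finite multiplicity whose only accumulation point is at infinity; since $D\le0$ and $\ell^2(\mathfrak{X})$ is infinite-dimensional, these eigenvalues are nonpositive and tend to $-\infty$, and I would list them decreasingly as $0\ge m_0\ge m_1\ge\cdots$. To match them with the hypergeometric data, recall that $f\mapsto\sqrt{w}\,f$ is a unitary from $L^2(\mathfrak{X},w)$ onto $\ell^2(\mathfrak{X})$ intertwining $\mathcal{D}$ and $D$; hence each polynomial $y_n$ solving \eqref{eq:hypergeometric} with $\lambda=-m_n$ gives, after normalisation, an eigenvector proportional to $\sqrt{w}\,y_n$ of $D$ with eigenvalue $m_n=\tau'n+\sigma''n(n-1)/2$. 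That these eigenvectors exhaust the basis is the completeness of the orthogonal polynomial system in $L^2(\mathfrak{X},w)$: any eigenvector orthogonal to all of them corresponds, under the conjugation, to a function in $L^2(\mathfrak{X},w)$ orthogonal to every polynomial, which must vanish. This identifies $(f_n)$ and $(m_n)$ and completes the proof.
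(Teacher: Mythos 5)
Your route (self-adjointness from a real point in the resolvent set, plus a Green operator built from recessive solutions of the three-term recurrence) is a legitimate classical strategy and is genuinely different from the paper's argument. The paper instead writes $D-\zeta=(1+AT_\zeta^{-1})T_\zeta$ with $T_\zeta$ a (shifted) diagonal part whose inverse is manifestly compact, and shows $\|AT_\zeta^{-1}\|<1$ for large $\zeta$ by elementary coefficient estimates; this avoids any asymptotic analysis of solutions. The problem with your proposal is that the step you yourself flag as the main obstacle is not a technicality that can be waved through: it is the entire content of the proposition in the hardest case. When $\deg\sigma=2$ (the Askey--Lesky case on $\mathbb{Z}$, which is the genuinely two-ended situation), one has $\mu_x\sim\lambda_x\sim x^2$ and $\sqrt{\mu_{x+1}\lambda_x}/(\mu_x+\lambda_x)\to 1/2$, so the Poincar\'e--Perron characteristic equation has a double root at $1$ and the Poincar\'e--Perron theorem is inconclusive; you would need a Birkhoff--Adams type expansion to establish the recessive/dominant dichotomy, and then quantitative growth/decay rates sharp enough to make $\sum_{x\le y}|\phi_-(x)|^2|\phi_+(y)|^2$ converge (note $\phi_-$ is dominant, hence growing, at $+\infty$, so this is not automatic from $\phi_+\in\ell^2$ near $+\infty$). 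None of this is carried out, so as written the proof has a genuine gap exactly where the difficulty lies.

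The final paragraph also contains an overreach that is actually false in general. You identify the eigenbasis with $\sqrt{w}\,y_n$ for \emph{all} $n$ and invoke completeness of the polynomials in $L^2(\mathfrak{X},w)$. For the Askey--Lesky weight only finitely many moments of $w$ exist, so only finitely many of the polynomial solutions $y_n$ lie in $L^2(\mathfrak{X},w)$; moreover $\tau'n+\sigma''n(n-1)/2\to+\infty$ when $\sigma''>0$, which is incompatible with $D\le 0$, confirming that the higher polynomial ``eigenfunctions'' are not genuine eigenvectors. The proposition deliberately asserts only the existence of \emph{some} orthonormal eigenbasis with eigenvalues tending to $-\infty$ (this follows abstractly from self-adjointness plus compact resolvent plus $D\le 0$, as in the paper's one-line reduction); the identification with the hypergeometric eigenvalues is made only later, in Section 4.2, under the additional standing assumption that all moments of $w$ are finite. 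You should drop the completeness claim and stop at the abstract spectral statement.
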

\begin{proof}
    The latter statement follows from the former (see e.g., \cite[Section X.1]{RS:book2} and \cite[Theorem XIII.64]{RS:book4}). Thus, it suffices to prove the first statement. For any $\zeta>0$ two operators $T_\zeta$ and $A$ on $\ell^2(\mathfrak{X})$ is defined by
    \[[T_\zeta f](x):=-(\mu_x+\lambda_x+t_x+\zeta)f(x), \quad [Af](x):=\sqrt{\mu_{x+1}\lambda_x}f(x+1)+t_xf(x)+\sqrt{\mu_x\lambda_{x-1}}f(x-1),\]
    where $t_x:=2\sqrt{\mu_x\lambda_x}/(\sqrt{6}-1)$. By definition, we have $D-\zeta=T_\zeta+A=(1+AT_\zeta^{-1})T_\zeta$ and
    \[[AT_\zeta^{-1}f](x)=-\frac{\sqrt{\mu_{x+1}\lambda_x}f(x+1)}{\mu_{x+1}+\lambda_{x+1}+t_{x+1}+\zeta}-\frac{t_xf(x)}{\mu_x+\lambda_x+t_x+\zeta}-\frac{\sqrt{\mu_x\lambda_{x-1}}f(x-1)}{\mu_{x-1}+\lambda_{x-1}+t_{x-1}+\zeta}.\]
    We show that for sufficiently large $\xi>0$ the above coefficients
    \[a_x:=\frac{\sqrt{\mu_{x+1}\lambda_x}}{\mu_{x+1}+\lambda_{x+1}+t_{x+1}+\zeta}, \quad b_x:=\frac{t_x}{\mu_x+\lambda_x+t_x+\zeta}, \quad c_x:=\frac{\sqrt{\mu_x\lambda_{x-1}}}{\mu_{x-1}+\lambda_{x-1}+t_{x-1}+\zeta}\]
    are less than $1/\sqrt{6}$. Then we have $\|AT_\zeta^{-1}\|<1$.
    First, for any $x\in\mathfrak{X}$ we have
    \[b_x\leq \frac{t_x}{2\sqrt{\mu_x\lambda_x}+t_x+\zeta}<\frac{1}{\sqrt{6}}.\]
    Next, we have
    \begin{align*}
        a_x\leq \frac{\sqrt{\mu_{x+1}\lambda_x}}{2\sqrt{\mu_{x+1}\lambda_{x+1}}+t_{x+1}+\zeta},
    \end{align*}
    and the right-hand side is less than $1/\sqrt{6}$ if and only if
    \[\sqrt{\mu_{x+1}\lambda_x}<\frac{1}{\sqrt{6}}(2\sqrt{\mu_{x+1}\lambda_{x+1}}+t_{x+1}+\zeta)=\frac{2\sqrt{\mu_{x+1}\lambda_{x+1}}}{\sqrt{6}-1}+\frac{\zeta}{\sqrt{6}}\]
    holds. Moreover, we have
    \[\sqrt{\mu_{x+1}\lambda_x}-\frac{2\sqrt{\mu_{x+1}\lambda_{x+1}}}{\sqrt{6}-1}=\sqrt{\mu_{x+1}\lambda_x}\left(1-\frac{2\sqrt{\lambda_{x+1}/\lambda_x}}{\sqrt{6}-1}\right)\to -\infty\]
    as $|x|\to \infty$. Thus, $a_x<1/\sqrt{6}$ holds for any $x\in\mathfrak{X}$ if $\zeta$ is sufficiently large. Similarly, we have
    \[c_x\leq \frac{\sqrt{\mu_x\lambda_{x-1}}}{2\sqrt{\mu_{x-1}\lambda_{x-1}}+t_{x-1}+\zeta}\]
    and the right-hand side is less than $1/\sqrt{6}$ if and only if
    \[\sqrt{\mu_x\lambda_{x-1}}<\frac{1}{\sqrt{6}}(2\sqrt{\mu_{x-1}\lambda_{x-1}}+t_{x-1}+\zeta)=\frac{2\sqrt{\mu_{x-1}\lambda_{x-1}}}{\sqrt{6}-1}+\frac{\zeta}{\sqrt{6}}\]
    holds. Moreover, we have
    \[\sqrt{\mu_x\lambda_{x-1}}-\frac{2\sqrt{\mu_{x-1}\lambda_{x-1}}}{\sqrt{6}-1}=\sqrt{\mu_x\lambda_{x-1}}\left(1-\frac{2\sqrt{\mu_{x-1}/\mu_x}}{\sqrt{6}-1}\right)\to -\infty\]
    as $|x|\to\infty$. Thus, $c_x<1/\sqrt{6}$ holds for any $x\in\mathfrak{X}$ if $\zeta$ is sufficiently large. Therefore, we can take $\zeta>0$ such that $\|AT_\zeta^{-1}\|<1$. Hence the Neumann series of $AT_\zeta^{-1}$ converges. Thus, $1+AT_\zeta^{-1}$ is invertible, and $(1+AT_\zeta^{-1})$ is bounded. Moreover, $(D-\zeta)^{-1}=T_\zeta^{-1}(1+AT_\zeta^{-1})^{-1}$ is a compact operator since $T_\zeta^{-1}$ is a compact operator.
\end{proof}

We give three examples of difference operators of hypergeometric type. All of them satisfy that $\sigma(x)>0$ on $\mathfrak{X}$. Thus, by the above results, the corresponding difference operators are self-adjoint.

\begin{example}[Meixner polynomials]\label{ex:Meixner}
    Let $\mathfrak{X}=\mathbb{Z}_{\geq0}$ and $w_{\beta, \xi}\colon \mathfrak{X}\to \mathbb{R}_{>0}$ be defined by
    \[w_{\beta, \xi}(x):=\frac{(\beta)_x \xi^x}{x!},\]
    where $\beta>0$, $\xi\in (0, 1)$, and $(\beta)_x:=\beta(\beta+1)\cdots (\beta+x-1)$. Namely, $w_{\beta, \xi}$ is the negative binomial distribution with parameters $\beta, \xi$. Then $\Delta[\sigma(x) w_{\beta, \xi}(x)]=\tau(x)w_{\beta, \xi}(x)$ holds on $\mathfrak{X}$ when $\sigma(x):=x$ and $\tau(x):=-(1-\xi)x+\beta\xi$. We define
    \[m_n:=\tau'n+\sigma''n(n-1)/2=-(1-\xi)n,\]
    which satisfies that $m_0=0>m_1>\cdots$. Moreover, $x^n\sigma(x)w_{\beta, \xi}(x)$ is equal to 0 at $x=0$ and tends to 0 as $x\to \infty$ for every $n\geq0$. Thus, for every $n\geq0$ there exists a polynomial solution of Equation \eqref{eq:hypergeometric} with $\lambda=-m_n$ and they are orthogonal with respect to $w_{\beta, \xi}$. We call them the \emph{Meixner polynomials}.
\end{example}

\begin{example}[Charlier polynomiasl]\label{ex:Charlier}
    Let $\mathfrak{X}=\mathbb{Z}_{\geq0}$ and $w_\mu\colon \mathfrak{X}\to \mathbb{R}_>0$ be defined by
    \[w_\mu(x):=\frac{e^{-\mu}\mu^x}{x!},\]
    where $\mu>0$. Namely, $w_\mu$ is the Poisson distribution with parameter $\lambda$. If we define $\sigma(x):=x$ and $\tau(x):=\mu-x$, then $\Delta[\sigma(x)w_\mu(x)]=\tau(x)w_\mu(x)$ holds on $\mathfrak{X}$. We also define
    \[m_n:=\tau'n+\sigma''n(n-1)/2=-n,\]
    which satisfies that $m_0=0>m_1>\cdots$. Moreover, $x^n\sigma(x)w_{\beta, \xi}(x)$ is equal to 0 at $x=0$ and tends to 0 as $x\to \infty$ for every $n\geq0$. Thus, for every $n\geq0$ there exists a polynomial solution of Equation \eqref{eq:hypergeometric} with $\lambda=-m_n$ and they are orthogonal with respect to $w_\mu$. We call them the \emph{Charlier polynomials}.
\end{example}

Following \cite{Olshanski03, BO05}, we introduce the following terminology: a pair of $z, z'\in\mathbb{C}$ is said to be
\begin{itemize}
    \item \emph{principal} if $z, z'\in\mathbb{C}\backslash\mathbb{R}$ and $z'=\overline{z}$,
    \item \emph{complementary} if $z, z'\in\mathbb{R}$ and $k<z, z'<k+1$ for some integer $k\in\mathbb{Z}$.
\end{itemize}
It is easy to check that a pair of $z, z'\in\mathbb{C}$ is either principal or complementary if and only if $(z+k)(z'+k)>0$ for any $k\in\mathbb{Z}$.

\begin{example}[Askey--Lesky polynomials]\label{ex:AL}
    Let $\mathfrak{X}=\mathbb{Z}$ and $u, u', w, w'\in\mathbb{C}$ such that both pairs of $u, u'$ and $w, w'$ are principal or complementary. We suppose that $u+u'+w+w'>2N+1$. A weight function $w:=w_{u, u', w, w'}\colon \mathfrak{X}\to \mathbb{R}_{>0}$ is defined by
    \[w(x):=\frac{1}{\Gamma(u-x+1)\Gamma(u'-x+1)\Gamma(w+x+1)\Gamma(w'+x+1)}.\]
    We remark that $w(x)$ is strictly positive on $\mathfrak{X}$ (see \cite[Lemma 7.9]{Olshanski03}). If we define
    \[\sigma(x):=(x+w)(x+w'),\quad \tau(x):=-(u+u'+w+w')x+uu'-ww',\]
    then $\Delta[\sigma(x)w(x)]=\tau(x)w(x)$ holds on $\mathfrak{X}$. We also define
    \[m_n:=\tau'n+\sigma''n(n-1)/2=-(u+u'+w+w')n + n(n-1),\]
    which satisfies that $m_0=0>m_1>\cdots>m_{N+1}$. Moreover, $x^n\sigma(x)w(x)\to 0$ as $x\to \pm\infty$ for $n=0, 1, \dots, 2N-2$. Thus, for $n=0, \dots, N-1$ there exists a polynomial solution of Equation \eqref{eq:hypergeometric} with $\lambda=-m_n$ and they are orthogonal with respect to $w$. Following \cite{Olshanski03, BO05}, we call them the \emph{Askey--Lesky polynomials}.
\end{example}

Let us explain a representation-theoretic origin of this example. See \cite{Olshanski03, BO05} for more details. It is well known that the irreducible representations of the unitary group $U(N)$ of rank $N$ are parametrized by these highest weights. More explicitly, they are parametrized by
\[\mathbb{S}_N:=\{\lambda=(\lambda_1\geq \cdots \lambda_N)\in\mathbb{Z}^N\}.\]
We recall that a \emph{character} of $U(N)$ is a continuous function on $U(N)$ that is positive-definite, central, and normalized (i.e., the value at the identity is equal to one). Since $U(N)$ is compact, every character can be decomposed into a convex combination of \emph{irreducible} characters, which are given by irreducible representations of $U(N)$. Namely, they are parametrized by $\mathbb{S}_N$. Hence, by taking coefficients of decompositions, characters of $U(N)$ associate with probability measures on $\mathbb{S}_N$. Moreover, there exists a bijection $\lambda\in\mathbb{S}_N\mapsto \omega^{(\lambda)}\in\mathcal{C}_N(\mathbb{Z})$ given by
\[\omega^{(\lambda)}_x:=\begin{cases}1 & x=\lambda_i+N-i \text{ for }i=1, \dots, N,\\ 0 & \text{otherwise}.\end{cases}\]
Therefore, providing $N$-point processes on $\mathbb{Z}$, including Example \ref{ex:AL}, is essentially equivalent to providing characters of $U(N)$.

The following is another type of difference operator discussed in \cite{Olshanski08}. It might confuse, but this is referred to as the \emph{hypergeometric} difference operator because its eigenfunctions are expressed by the Gauss hypergeometric function.
\begin{example}\label{ex:z-measure}
    Let $z, z'\in\mathbb{C}$ be a principal or complementary pair and $\xi\in(0, 1)$. We define a operator $D_{z, z', \xi}$ on $\mathfrak{X}:=\mathbb{Z}+1/2$ by
    \begin{align*}
        [D_{z, z', \xi}f](x) & =\sqrt{\xi(z+x+1/2)(z'+x+1/2)}f(x+1)-(x+\xi(z+z'+x))f(x) \\&\quad+\sqrt{\xi(z+x-1/2)(z'+x-1/2)}f(x-1)
    \end{align*}
    with domain $\ell^2_0(\mathfrak{X})$. By \cite[Proposition 2.2]{Olshanski08}, $D_{z, z', \xi}$ is essentially self-adjoint, and we use the same symbol $D_{z, z', \xi}$ to denote its closure. Moreover, the spectrum of $D_{z, z', \xi}$ consists of $\{(1-\xi)a \mid a\in \mathfrak{X}\}$, and there exists an orthonormal basis $\{\psi_{a; z, z', \xi}\}_{a\in\mathfrak{X}}$ such that
    \[D_{z, z', \xi}\psi_{a; z, z', \xi}=(1-\xi)a\psi_{a;z, z', \xi}.\]
    See \cite[Equation (2.1)]{BO06} for an explicit expression of $\psi_{a; z, z', \xi}$.
\end{example}
This example also has a representation-theoretic origin and relates to a determinantal point process. We will explain them later (see Section \ref{sec:hgdo}). In the regime
\[z, z'\to\infty, \quad \xi\to0, \quad zz'\xi\to \theta>0,\]
the hypergeometric difference operator $D_{z, z', \xi}$ becomes the difference operator $D_\theta^\text{Bessel}$ on $\mathbb{Z}+1/2$ defined by
\[[D_\theta^\text{Bessel}f](x)=\sqrt{\theta}f(x+1)-xf(x)+\sqrt{\theta}f(x-1).\]
It is known that there exists an orthonormal basis $\{\psi_{a; \theta}\}_{a\in\mathbb{Z}+1/2}$ such that $D_\theta^\text{Bessel}\psi_{a; \theta}=a\psi_{a; \theta}$. Moreover, $\psi_{a; \theta}$ can be expressed by the Bessel function. See \cite{Olshanski08} for more details.

\section{Dynamics of unitary operators}\label{sec:unitary_dynamics}
\subsection{General formalism}
In this section, we study unitary dynamics on determinantal point processes. First, we start with a basic observation of operator algebras.

Let $\mathcal{H}$ be a separable complex Hilbert space and $\mathfrak{I}(\mathcal{H})$ the GICAR algebra over $\mathcal{H}$. We recall that for all unitary operator $U$ on $\mathcal{H}$ there exists a $*$-automorphism $\gamma_U\colon \mathfrak{I}(\mathcal{H})\to \mathfrak{I}(\mathcal{H})$ such that $\gamma_U(a^*(h)a(k))=a^*(Uh)a(Uk)$ for any $h, k\in\mathcal{H}$. Let us fix a positive contraction operator $K$ on $\mathcal{H}$, and $\varphi_K$ denotes the corresponding quasi-free state on $\mathfrak{I}(\mathcal{H})$. We denote by $(\rho_K, \mathcal{H}_K, \Omega_K)$ the GNS-triple associated with $\varphi_K$. See Section \ref{subsec:GICAR} for its explicit realization. Since the GNS representation is unique up to unitary equivalence, we have the following statement:

\begin{proposition}\label{prop:second_quantization}
    Let $A$ be a self-adjoint operator on $\mathcal{H}$ and $(U_t)_{t\in\mathbb{R}}$ the strongly continuous one-parameter group of unitary operators generated by $A$, that is, $U_t=e^{\mathrm{i}tA}$ for any $t\in\mathbb{R}$. If $A$ commutes with $K$, then the following hold true:
    \begin{enumerate}
        \item There exists a unique strongly continuous one-parameter group $(\Gamma_K(U_t))_{t\in\mathbb{R}}$ of unitary operators on $\mathcal{H}_K$ such that $\Gamma_K(U_t)\rho_K(x)\Omega_K=\rho_K(\gamma_{U_t}(x))\Omega_K$ for any $x\in\mathfrak{I}(\mathcal{H})$.
        \item Let $d\Gamma_K(A)$ denote the infinitesimal generators of $(\Gamma_K(U_t))_{t\in\mathbb{R}}$. If $\mathcal{V}$ is a dense subspace in $\mathcal{H}$ such that $\mathcal{V}\subset \mathrm{dom}(A)$ and $U_t \mathcal{V}\subset \mathcal{V}$ for any $t\in\mathbb{R}$, then
              \[\widetilde{\mathcal{V}}:=\mathrm{span}\{\rho_K(a^*(h_1)\cdots a^*(h_n)a(k_1)\cdots a(k_n))\Omega_K\mid h_1, \dots, h_n, k_1, \dots ,k_n\in \mathcal{V}\}\]
              is a core of $d\Gamma_K(A)$.
    \end{enumerate}
\end{proposition}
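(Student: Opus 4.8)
The plan is to handle the two assertions separately: (1) is a standard GNS-implementation argument once one checks state-invariance, while (2) reduces to a classical core criterion. For (1), the first point is that the commutation $[A,K]=0$ makes $\varphi_K$ invariant under the Bogoliubov automorphisms $\gamma_{U_t}$. Indeed $U_t=e^{\mathrm{i}tA}$ commutes with $K$, so $U_t^*KU_t=K$, and since precomposing a quasi-free state with a Bogoliubov automorphism is again quasi-free one has $\varphi_K\circ\gamma_{U_t}=\varphi_{U_t^*KU_t}=\varphi_K$. A $\varphi_K$-preserving $*$-automorphism is implemented in the GNS representation by a unique unitary fixing the cyclic vector, so I would set $\Gamma_K(U_t)\rho_K(x)\Omega_K:=\rho_K(\gamma_{U_t}(x))\Omega_K$. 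This is isometric because $\|\rho_K(\gamma_{U_t}(x))\Omega_K\|^2=\varphi_K(\gamma_{U_t}(x^*x))=\varphi_K(x^*x)=\|\rho_K(x)\Omega_K\|^2$; it is onto with inverse $\Gamma_K(U_{-t})$; and the group law and uniqueness follow from $\gamma_{U_s}\circ\gamma_{U_t}=\gamma_{U_{s+t}}$ together with cyclicity of $\Omega_K$. Strong continuity reduces, by uniform boundedness of the $\Gamma_K(U_t)$ and cyclicity, to showing $\|\rho_K((\gamma_{U_t}-\mathrm{id})(x))\Omega_K\|\to0$ for $x$ in the dense $*$-subalgebra generated by the $a^*(h)a(k)$; on a monomial this follows from a telescoping estimate based on $\|\gamma_{U_t}(a(h))-a(h)\|=\|U_th-h\|\to0$, and a three-$\varepsilon$ argument (using that $\gamma_{U_t}$ is isometric and $\rho_K$ contractive) extends it to all of $\mathfrak{I}(\mathcal{H})$.

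For (2) I would invoke the standard criterion that a dense subspace contained in the domain of the generator of a $C_0$-group and invariant under that group is a core (see, e.g., \cite{BratteliRobinson1}). It therefore suffices to verify three things for $\widetilde{\mathcal{V}}$: density in $\mathcal{H}_K$, containment in $\mathrm{dom}(d\Gamma_K(A))$, and invariance under $(\Gamma_K(U_t))_{t\in\mathbb{R}}$. For density, normal ordering via the CAR shows that the $*$-algebra generated by the $a^*(h)a(k)$ is spanned by the monomials $a^*(h_1)\cdots a^*(h_n)a(k_1)\cdots a(k_n)$, so by cyclicity (Lemma \ref{lem:GNS_space}) the corresponding vectors, with all $h_i,k_j\in\mathcal{H}$, span a dense subspace of $\mathcal{H}_K$. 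Since the map $(h_1,\dots,h_n,k_1,\dots,k_n)\mapsto\rho_K(a^*(h_1)\cdots a^*(h_n)a(k_1)\cdots a(k_n))\Omega_K$ is bounded and multilinear and $\mathcal{V}$ is dense in $\mathcal{H}$, restricting the arguments to $\mathcal{V}$ preserves density, so $\widetilde{\mathcal{V}}$ is dense.

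For the domain, I use $\Gamma_K(U_t)\rho_K(m)\Omega_K=\rho_K(\gamma_{U_t}(m))\Omega_K$, which for a monomial $m=a^*(h_1)\cdots a^*(h_n)a(k_1)\cdots a(k_n)$ is again a monomial vector with each $h_i$ replaced by $U_th_i$ and each $k_j$ by $U_tk_j$. Because each argument lies in $\mathcal{V}\subset\mathrm{dom}(A)$, the curves $t\mapsto U_th_i$ are differentiable at $t=0$ with derivative $\mathrm{i}Ah_i$; composing with the bounded multilinear map above and applying the Leibniz/chain rule shows $t\mapsto\Gamma_K(U_t)\xi$ is differentiable at $0$ in the $\mathcal{H}_K$-norm for $\xi\in\widetilde{\mathcal{V}}$, whence $\xi\in\mathrm{dom}(d\Gamma_K(A))$ with
\begin{align*}
    d\Gamma_K(A)\xi
    &=\sum_{i=1}^n\rho_K\!\left(a^*(h_1)\cdots a^*(\mathrm{i}Ah_i)\cdots a^*(h_n)a(k_1)\cdots a(k_n)\right)\Omega_K\\
    &\quad+\sum_{j=1}^n\rho_K\!\left(a^*(h_1)\cdots a^*(h_n)a(k_1)\cdots a(\mathrm{i}Ak_j)\cdots a(k_n)\right)\Omega_K,
\end{align*}
an explicit formula that feeds into Theorem \ref{thm:hamiltonian}. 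Invariance is immediate from the same computation: since $U_t\mathcal{V}\subset\mathcal{V}$, the vector $\Gamma_K(U_t)\xi$ is a monomial vector whose arguments again lie in $\mathcal{V}$, so $\Gamma_K(U_t)\widetilde{\mathcal{V}}\subset\widetilde{\mathcal{V}}$.

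With density, domain-containment, and group-invariance established, the core criterion completes (2). I expect the main obstacle to be the domain step, namely the rigorous justification that $t\mapsto\Gamma_K(U_t)\xi$ is differentiable at $t=0$ in the $\mathcal{H}_K$-norm, i.e. that the derivative may be interchanged with the finite product of creation/annihilation factors; this rests on the boundedness and multilinearity of $\rho_K(\,\cdot\,)\Omega_K$ and on the hypothesis $\mathcal{V}\subset\mathrm{dom}(A)$. The other delicate point is the density reduction from $\mathcal{H}$ to the dense subspace $\mathcal{V}$. Everything else is formal once part (1) supplies the implementing unitary group.
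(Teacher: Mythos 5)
Your argument is correct and follows essentially the same route as the paper: part (1) is the standard unitary implementation, in the GNS representation, of the Bogoliubov automorphisms $\gamma_{U_t}$ once $U_tK=KU_t$ gives $\varphi_K\circ\gamma_{U_t}=\varphi_K$, and part (2) is the dense-invariant-subspace core criterion (the paper cites \cite{RS:book1}, Theorem VIII.11). The only discrepancy is a harmless normalization: with the convention $\Gamma_K(U_t)=e^{\mathrm{i}t\,d\Gamma_K(A)}$ used in the paper, the derivative of $\Gamma_K(U_t)\xi$ at $t=0$ equals $\mathrm{i}\,d\Gamma_K(A)\xi$, so the factor $\mathrm{i}$ should not appear inside the arguments $a^*(Ah_i)$, $a(Ak_j)$ in your formula for $d\Gamma_K(A)\xi$.
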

\begin{proof}
    By the universality of CAR algebras, there exists an one-parameter group $(\gamma_{U_t})_{t\in\mathbb{R}}$ of $*$-automorphisms on $\mathfrak{A}(\mathcal{H})$ such that $\gamma_{U_t}(a(h))=a(U_th)$ for any $h\in \mathcal{H}$. Moreover, $(\gamma_{U_t})_{t\in\mathbb{R}}$ is strongly continuous since $\|a(h)\|=\|h\|$ for any $h\in\mathcal{H}$. We remark that $\mathfrak{I}(\mathcal{H})$ is invariant under $(\gamma_{U_t})_{t\in\mathbb{R}}$ since $\gamma_{U_t}\circ \gamma_\lambda=\gamma_\lambda\circ \gamma_{U_t}$ for any $t\in\mathbb{R}$ and $\lambda\in\mathbb{T}$. Furthermore, $\varphi_K\circ \gamma_{U_t}=\varphi_K$ holds for every $t\in\mathbb{R}$ by the definition of quasi-free states and the assumption that $U_tK=KU_t$. Thus, we obtain a strongly continuous one-parameter group $(\Gamma_K(U_t))_{t\in\mathbb{R}}$ of unitary operators on $\mathcal{H}_K$ such that
    \[\Gamma_K(U_t)\Omega_K=\Omega_K, \quad \Gamma_K(U_t)\rho_K(x)=\rho_K(\gamma_{U_t}(x))\Gamma_K(U_t)\]
    for every $t\in\mathbb{R}$ and $x\in\mathfrak{I}(\mathcal{H})$.

    If $h_1, \dots, h_n, k_1, \dots, k_n\in \mathrm{dom}(A)$, then $\rho_K(a^*(h_1)\cdots a^*(h_n)a(k_1)\cdots a(k_n))\Omega_K$ belongs to $\mathrm{dom}(d\Gamma_K(A))$ and we have
    \begin{align*}
         & d\Gamma_K(A)\rho_K(a^*(h_1)\cdots a^*(h_n)a(k_1)\cdots a(k_n))\Omega_K                       \\
         & =\sum_{i=1}^n\rho_K(a^*(h_1)\cdots a^*(Ah_i)\cdots a^*(h_n)a(k_1)\cdots a(k_n))\Omega_K      \\
         & \quad+\sum_{i=1}^n\rho_K(a^*(h_1)\cdots  a^*(h_n)a(k_1)\cdots a(Ak_i)\cdots a(k_n))\Omega_K.
    \end{align*}
    Thus, we have $\widetilde{\mathcal{V}}\subset \mathrm{dom}(d\Gamma_K(A))$. Moreover, $\widetilde{\mathcal{V}}$ is dense in $\mathcal{H}_K$ and invariant under $(\Gamma_K(U_t))_{t\in\mathbb{R}}$. Therefore, $\widetilde{\mathcal{V}}$ is a core for $d\Gamma_K(A)$ by \cite[Theorem VIII.11]{RS:book1}.
\end{proof}

Now we suppose that there exists an orthonormal basis $(v_\alpha)_{\alpha\in \mathcal{I}}$ for $\mathcal{H}$ such that $Av_\alpha=m_\alpha v_\alpha$ for some $m_\alpha\in\mathbb{R}$, where $\mathcal{I}$ is a certain countable index set. Let $\mathcal{I}_0\subset \mathcal{I}$ and $K$ denote the orthogonal projection from $\mathcal{H}$ onto $\overline{\mathrm{span}}\{v_\alpha\}_{\alpha\in \mathcal{I}_0}$. Since $K$ commutes with $A$, by Proposition \ref{prop:second_quantization}, we obtain the dynamics $(\Gamma_K(U_t))_{t\in\mathbb{R}}$ of unitary operators on $\mathcal{H}_K$, where $U_t=e^{\mathrm{i}tA}$ for any $t\in\mathbb{R}$. Its generator $d\Gamma_K(A)$ has a core given as follows.
\begin{lemma}\label{lem:core}
    For any $\alpha_1, \dots,\alpha_n\in\mathcal{I}\backslash \mathcal{I}_0$ and $\beta_1, \dots, \beta_n\in\mathcal{I}_0$ we define
    \[v_{\alpha_1, \dots, \alpha_n;\beta_1, \dots, \beta_n}:=\rho_K(a^*(v_{\alpha_1})\cdots a^*(v_{\alpha_n})a(v_{\beta_1})\cdots a(v_{\beta_n}))\Omega_K.\]
    Then $v_{\alpha_1, \dots, \alpha_n;\beta_1, \dots, \beta_n}\in \mathrm{dom}(d\Gamma_K(A))$ and the linear span of them is a core for $d\Gamma_K(A)$.
\end{lemma}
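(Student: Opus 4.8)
The plan is to deduce the statement from Proposition~\ref{prop:second_quantization}(2) by taking for $\mathcal{V}$ the algebraic span $\mathcal{V}_0:=\mathrm{span}\{v_\alpha\mid\alpha\in\mathcal{I}\}$ of the eigenbasis, and then to identify the resulting core $\widetilde{\mathcal{V}_0}$ with $\mathrm{span}\{v_{\alpha_1,\dots,\alpha_n;\beta_1,\dots,\beta_n}\}$. First I would check that $\mathcal{V}_0$ meets the hypotheses of Proposition~\ref{prop:second_quantization}(2): it is dense because $(v_\alpha)_{\alpha\in\mathcal{I}}$ is an orthonormal basis, it lies in $\mathrm{dom}(A)$ because each $v_\alpha$ is an eigenvector, and it is invariant under $U_t=e^{\mathrm{i}tA}$ since $U_tv_\alpha=e^{\mathrm{i}tm_\alpha}v_\alpha$. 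Proposition~\ref{prop:second_quantization}(2) then gives at once that every $v_{\alpha_1,\dots,\alpha_n;\beta_1,\dots,\beta_n}$ belongs to $\mathrm{dom}(d\Gamma_K(A))$ and that $\widetilde{\mathcal{V}_0}$ is a core, so the whole task reduces to the equality $\widetilde{\mathcal{V}_0}=\mathrm{span}\{v_{\alpha_1,\dots,\alpha_n;\beta_1,\dots,\beta_n}\}$.

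Because $h\mapsto a^*(h)$ and $k\mapsto a(k)$ are linear and $\rho_K$ is a homomorphism, expanding every argument in the basis shows that $\widetilde{\mathcal{V}_0}$ is already spanned by the vectors $\rho_K(a^*(v_{\gamma_1})\cdots a^*(v_{\gamma_n})a(v_{\delta_1})\cdots a(v_{\delta_n}))\Omega_K$ with $\gamma_i,\delta_j$ ranging over all of $\mathcal{I}$. The $v_{\alpha_1,\dots,\alpha_n;\beta_1,\dots,\beta_n}$ are exactly those generators with $\gamma_i=\alpha_i\in\mathcal{I}\setminus\mathcal{I}_0$ and $\delta_j=\beta_j\in\mathcal{I}_0$, which gives one inclusion immediately. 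For the reverse inclusion I would prove that an arbitrary generator is either $0$ or, up to sign, one of the $v_{\alpha_1,\dots,\alpha_n;\beta_1,\dots,\beta_n}$.

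The computation exploits that $K$ is a projection, so $S=(1-K)^{1/2}=1-K$ and $T=K^{1/2}=K$. Consequently $\rho_K(a(v_\delta))=a(v_\delta)\otimes\bar\Gamma$ for $\delta\in\mathcal{I}\setminus\mathcal{I}_0$ and $\rho_K(a(v_\delta))=1\otimes a^*(\overline{v_\delta})$ for $\delta\in\mathcal{I}_0$, and dually $\rho_K(a^*(v_\gamma))=a^*(v_\gamma)\otimes\bar\Gamma$ for $\gamma\in\mathcal{I}\setminus\mathcal{I}_0$ and $\rho_K(a^*(v_\gamma))=1\otimes a(\overline{v_\gamma})$ for $\gamma\in\mathcal{I}_0$. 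Applying the generator to $\Omega_K=\Omega\otimes\overline{\Omega}$ from right to left and tracking the particle numbers in the two tensor factors, I would argue as follows. In the first phase only the $a(v_{\delta_j})$ act; the left factor acquires no particles, so any $\delta_j\in\mathcal{I}\setminus\mathcal{I}_0$ acts as $a(v_{\delta_j})$ on the left vacuum and annihilates the vector. Hence the generator vanishes unless every $\delta_j\in\mathcal{I}_0$, in which case the first phase yields $\Omega\otimes(\overline{v_{\delta_1}}\wedge\cdots\wedge\overline{v_{\delta_n}})$. In the second phase each $a^*(v_{\gamma_i})$ either creates $v_{\gamma_i}$ on the left (when $\gamma_i\notin\mathcal{I}_0$) or annihilates $\overline{v_{\gamma_i}}$ on the right (when $\gamma_i\in\mathcal{I}_0$), so the outcome is, up to a sign from the parity operators $\bar\Gamma$ and from reordering wedges, a basis vector $v_{\alpha_1}\wedge\cdots\wedge v_{\alpha_p}\otimes\overline{v_{\beta_1}}\wedge\cdots\wedge\overline{v_{\beta_p}}$ with $\alpha_i\in\mathcal{I}\setminus\mathcal{I}_0$ and $\beta_j\in\mathcal{I}_0$, or $0$ if a contraction finds no partner. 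Running the same right-to-left computation on the good generators themselves shows that $v_{\alpha_1,\dots,\alpha_p;\beta_1,\dots,\beta_p}$ coincides, up to sign, with this basis vector, whence $\widetilde{\mathcal{V}_0}\subseteq\mathrm{span}\{v_{\alpha_1,\dots,\alpha_n;\beta_1,\dots,\beta_n}\}$. This proves the equality, and the core property is inherited.

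I expect the only delicate point to be the bookkeeping of the signs produced by the parity operators $\bar\Gamma$ and by the reorderings of the exterior products. These affect only the scalar prefactor and never the multiset of surviving indices, so they cannot carry a generator outside $\mathrm{span}\{v_{\alpha_1,\dots,\alpha_n;\beta_1,\dots,\beta_n}\}$; the argument therefore goes through regardless of their precise values. A routine remark to record is that repeated indices simply make the relevant wedge, and hence the vector, vanish, which is consistent with the claim and causes no difficulty.
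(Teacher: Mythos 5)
Your proposal is correct and follows essentially the same route as the paper: both apply Proposition \ref{prop:second_quantization}(2) with $\mathcal{V}$ equal to the linear span of the eigenbasis $(v_\alpha)_{\alpha\in\mathcal{I}}$, whose density, containment in $\mathrm{dom}(A)$, and $U_t$-invariance are immediate. The paper leaves implicit the reduction from the core $\widetilde{\mathcal{V}}$ (generators with arbitrary indices in $\mathcal{I}$) to the span of the restricted vectors $v_{\alpha_1,\dots,\alpha_n;\beta_1,\dots,\beta_n}$; your explicit computation using $S=1-K$, $T=K$ to show that every generator is zero or, up to sign, one of the restricted ones is a correct and worthwhile filling-in of that step.
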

\begin{proof}
    Since $(v_\alpha)_{\alpha\in\mathcal{I}}$ is an orthonormal basis for $\mathcal{H}$, the linear span of them, denoted by $\mathcal{V}$, is dense in $\mathcal{H}$. Moreover, we have $U_tv_\alpha=e^{\mathrm{i}t m_\alpha}v_\alpha$, and hence $\mathcal{V}\subset \mathrm{dom}(A)$ and invariant under $(U_t)_{t\in\mathbb{R}}$. Thus, by Proposition \ref{prop:second_quantization}, the statement holds true.
\end{proof}

Since $U_tv_\alpha=e^{\mathrm{i}tm_\alpha}v_\alpha$ for any $t\geq0$ and $\alpha\in\mathcal{I}$, we have
\begin{align}\label{eq:action_U_t}
     & \Gamma_K(U_t)v_{\alpha_1, \dots, \alpha_n;\beta_1, \dots, \beta_n}\nonumber                                                   \\
     & =\rho_K(a^*(U_tv_{\alpha_1})\cdots a^*(U_tv_{\alpha_n})a(U_tv_{\beta_1})\cdots a(U_tv_{\beta_n}))\Omega_K\nonumber            \\
     & =\exp\left(\mathrm{i}t\sum_{j=1}^n(\mu_{\alpha_j}-\mu_{\beta_j})\right)v_{\alpha_1, \dots, \alpha_n;\beta_1, \dots, \beta_n}.
\end{align}
Thus, we have
\begin{equation}\label{eq:eigenvector}
    d\Gamma_K(A)v_{\alpha_1, \dots, \alpha_n;\beta_1, \dots, \beta_n}=\left(\sum_{j=1}^n(\mu_{\alpha_j}-\mu_{\beta_i})\right)v_{\alpha_1, \dots, \alpha_n;\beta_1, \dots, \beta_n}.
\end{equation}

As consequence of the above results, we obtain an expression of $d\Gamma_K(A)$ in terms of creation and annihilation operators.
\begin{theorem}\label{thm:hamiltonian}
    Let
    \[\mathcal{A}:=\sum_{\alpha\in\mathcal{I}\backslash\mathcal{I}_0}m_\alpha\rho_K(a^*(v_\alpha)a(v_\alpha))-\sum_{\alpha\in\mathcal{I}_0}m_\alpha\rho_K(a(v_\alpha)a^*(v_\alpha))\]
    defined on the linear span of the $v_{\alpha_1, \dots, \alpha_n;\beta_1, \dots, \beta_n}$. Then its colure coincides with $d\Gamma_K(A)$.
\end{theorem}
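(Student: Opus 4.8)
The plan is to prove the statement by showing that $\mathcal{A}$ and $d\Gamma_K(A)$ agree on the dense domain $\mathcal{D}:=\operatorname{span}\{v_{\alpha_1,\dots,\alpha_n;\beta_1,\dots,\beta_n}\}$ and then invoking Lemma \ref{lem:core}. First I would make sense of $\mathcal{A}$ on $\mathcal{D}$: although the two sums defining $\mathcal{A}$ are formally infinite, when applied to a fixed generator $v_{\alpha_1,\dots,\alpha_n;\beta_1,\dots,\beta_n}$ only the summands with $\alpha\in\{\alpha_1,\dots,\alpha_n\}$ (respectively $\beta\in\{\beta_1,\dots,\beta_n\}$) will contribute, so $\mathcal{A}$ is a well-defined (a priori unbounded) operator with domain $\mathcal{D}$.

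The core computation uses the explicit realization of $\rho_K$ from Section \ref{subsec:GICAR}. Since $K$ is the orthogonal projection onto $\overline{\operatorname{span}}\{v_\alpha\}_{\alpha\in\mathcal{I}_0}$, the operators $T=K^{1/2}=K$ and $S=(1-K)^{1/2}=1-K$ are the complementary projections onto the occupied and empty modes. Hence for $\alpha\in\mathcal{I}\setminus\mathcal{I}_0$ one has $Sv_\alpha=v_\alpha$, $Tv_\alpha=0$, so $\rho_K(a(v_\alpha))=a(v_\alpha)\otimes\bar\Gamma$ and $\rho_K(a^*(v_\alpha))=a^*(v_\alpha)\otimes\bar\Gamma$, whereas for $\beta\in\mathcal{I}_0$ one has $Sv_\beta=0$, $Tv_\beta=v_\beta$, so $\rho_K(a(v_\beta))=1\otimes a^*(\overline{v_\beta})$ and $\rho_K(a^*(v_\beta))=1\otimes a(\overline{v_\beta})$. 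Multiplying and using $\bar\Gamma^2=1$ collapses each summand of $\mathcal{A}$ to a one-mode number operator on a single tensor factor:
\begin{align*}
    \rho_K(a^*(v_\alpha)a(v_\alpha)) & =a^*(v_\alpha)a(v_\alpha)\otimes 1 \quad (\alpha\in\mathcal{I}\setminus\mathcal{I}_0),   \\
    \rho_K(a(v_\beta)a^*(v_\beta))   & =1\otimes a^*(\overline{v_\beta})a(\overline{v_\beta}) \quad (\beta\in\mathcal{I}_0).
\end{align*}

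Next I would identify the vectors themselves. Applying the string of operators above to $\Omega\otimes\bar\Omega$ and tracking the $\bar\Gamma$ twists (which only contribute an overall sign $(-1)^{n^2}$ through their action on the $n$-particle vector of the second Fock factor) gives
\[v_{\alpha_1,\dots,\alpha_n;\beta_1,\dots,\beta_n}=(-1)^{n^2}\,(v_{\alpha_1}\wedge\cdots\wedge v_{\alpha_n})\otimes(\overline{v_{\beta_1}}\wedge\cdots\wedge\overline{v_{\beta_n}}),\]
which (provided the $\alpha_j$ and the $\beta_j$ are pairwise distinct, else the vector vanishes) is a simultaneous eigenvector of all the number operators entering $\mathcal{A}$, the overall sign being immaterial. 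Reading off the occupancies, $a^*(v_\alpha)a(v_\alpha)\otimes 1$ detects $v_\alpha$ in the first factor and $1\otimes a^*(\overline{v_\beta})a(\overline{v_\beta})$ detects $\overline{v_\beta}$ in the second, so
\[\mathcal{A}\,v_{\alpha_1,\dots,\alpha_n;\beta_1,\dots,\beta_n}=\left(\sum_{j=1}^n m_{\alpha_j}-\sum_{j=1}^n m_{\beta_j}\right)v_{\alpha_1,\dots,\alpha_n;\beta_1,\dots,\beta_n},\]
which is exactly the eigenvalue recorded in \eqref{eq:eigenvector}. Thus $\mathcal{A}$ and $d\Gamma_K(A)$ coincide on $\mathcal{D}$.

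Finally, by Lemma \ref{lem:core} the subspace $\mathcal{D}$ is a core for $d\Gamma_K(A)$, i.e.\ $d\Gamma_K(A)$ is the closure of its restriction to $\mathcal{D}$; since $\mathcal{A}$ is precisely that restriction, $\overline{\mathcal{A}}=d\Gamma_K(A)$. I expect the only delicate points to be bookkeeping rather than substance: correctly following the $\bar\Gamma$ factors so that the cross terms cancel and each summand reduces to a one-mode number operator, and confirming that the formal infinite sums defining $\mathcal{A}$ act on each vector of $\mathcal{D}$ through finitely many nonzero terms. Neither is a genuine obstacle once the explicit form of $\rho_K$ is exploited.
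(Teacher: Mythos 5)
Your proof is correct, but it takes a genuinely different route from the paper's. The paper argues entirely inside the abstract GNS representation: it first observes that $\rho_K(a(v_\alpha)a^*(v_\alpha))\Omega_K=0$ for $\alpha\in\mathcal{I}_0$ and $\rho_K(a^*(v_\alpha)a(v_\alpha))\Omega_K=0$ for $\alpha\in\mathcal{I}\backslash\mathcal{I}_0$ (since the corresponding quasi-free expectations vanish), then uses the CAR to commute each quadratic term through the string $a^*(v_{\alpha_1})\cdots a(v_{\beta_n})$, picking up the Kronecker deltas $\sum_i\delta_{\alpha,\alpha_i}$ resp.\ $\sum_i\delta_{\alpha,\beta_i}$, and matches the resulting eigenvalue with Equation \eqref{eq:eigenvector}; the closure statement then follows from Lemma \ref{lem:core} exactly as you say. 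You instead exploit the explicit doubled-Fock-space realization of $\rho_K$ from Section \ref{subsec:GICAR}, together with the fact that $K$ here is a projection so that $S=1-K$ and $T=K$ are complementary projections; this collapses each summand of $\mathcal{A}$ to a one-mode number operator on a single tensor factor and identifies $v_{\alpha_1,\dots,\alpha_n;\beta_1,\dots,\beta_n}$ (up to the sign $(-1)^{n^2}$ coming from the $\bar\Gamma$ twists) with an elementary wedge vector, from which the eigenvalue $\sum_j m_{\alpha_j}-\sum_j m_{\beta_j}$ is read off by inspection. Your computation of $\rho_K(a^{\#}(v_\alpha))$ in the two cases, of the sign, and of the action of the number operators is accurate, and your remarks on well-definedness of the formally infinite sums are the right ones. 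What the paper's argument buys is independence from the particular realization of the GNS triple and from $K$ being a projection (only the vanishing of the two vacuum expectations is used); what yours buys is concreteness — it makes visible why the Wick-ordered form $-\sum_\alpha m_\alpha\rho_K(:a^*(v_\alpha)a(v_\alpha):_K)$ noted in the subsequent remark diagonalizes on the basis, and it is consistent with the description of $\mathcal{H}_K$ in Lemma \ref{lem:GNS_space}. Both are complete proofs.
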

\begin{proof}
    By Lemma \ref{lem:core}, it suffices to show that $\mathcal{A}$ coincides with $d\Gamma_K(A)$ on the linear span of the $v_{\alpha_1, \dots, \alpha_n;\beta_1, \dots, \beta_n}$. If $\alpha\in \mathcal{I}_0$, then we have $\varphi_K((a(v_\alpha)a^*(v_\alpha))^*(a(v_\alpha)a^*(v_\alpha)))=0$, and hence $\rho_K(a(v_\alpha)a(v_\alpha)^*)\Omega_K=0$. By the canonical anticommutation relation, for any $\alpha\in\mathcal{I}_0$ we have
    \begin{align*}
         & a(v_\alpha)a^*(v_\alpha)a^*(v_{\alpha_1})\cdots a^*(v_{\alpha_n})a(v_{\beta_1})\cdots a(v_{\beta_n})                                                     \\
         & =a^*(v_{\alpha_1})\cdots a^*(v_{\alpha_n})a(v_{\beta_1})\cdots a(v_{\beta_n})\left(a(v_\alpha)a^*(v_\alpha)+\sum_{i=1}^n\delta_{\alpha, \beta_i}\right),
    \end{align*}
    and hence
    \[\left(\sum_{\alpha\in\mathcal{I}_0}m_\alpha\rho_K(a(v_\alpha)a^*(v_\alpha))\right)v_{\alpha_1, \dots, \alpha_n;\beta_1, \dots, \beta_n}=\left(\sum_{j=1}^nm_{\beta_j}\right)v_{\alpha_1, \dots, \alpha_n;\beta_1, \dots, \beta_n}.\]
    Similarly, $\rho_K(a^*(v_\alpha)a(v_\alpha))\Omega_K=0$ holds for every $\alpha\in\mathcal{I}\backslash\mathcal{I}_0$. By the canonical anticommutation relation, for any $\alpha\in\mathcal{I}\backslash \mathcal{I}_0$ we have
    \begin{align*}
         & a(v_\alpha)a^*(v_\alpha)a^*(v_{\alpha_1})\cdots a^*(v_{\alpha_n})a(v_{\beta_1})\cdots a(v_{\beta_n})                                                      \\
         & =a^*(v_{\alpha_1})\cdots a^*(v_{\alpha_n})a(v_{\beta_1})\cdots a(v_{\beta_n})\left(a(v_\alpha)a^*(v_\alpha)+\sum_{i=1}^n\delta_{\alpha, \alpha_i}\right),
    \end{align*}
    and hence
    \[\left(\sum_{\alpha\in\mathcal{I}\backslash\mathcal{I}_0} m_\alpha\rho_K(a^*(v_\alpha)a(v_\alpha))\right)v_{\alpha_1, \dots, \alpha_n;\beta_1, \dots, \beta_n}=\left(\sum_{j=1}^n\mu_{\alpha_j}\right)v_{\alpha_1, \dots, \alpha_n;\beta_1, \dots, \beta_n}.\]
    Therefore, by Equation \eqref{eq:eigenvector}, we have $\mathcal{A}v_{\alpha_1, \dots, \alpha_n;\beta_1, \dots, \beta_n}=d\Gamma_K(A)v_{\alpha_1, \dots, \alpha_n;\beta_1, \dots, \beta_n}$, and hence $\overline{\mathcal{A}}=d\Gamma_K(A)$ holds.
\end{proof}

For any $n\geq1$ we define $[n]:=\{1, \dots, n\}$.
\begin{remark}
    Evans \cite{Evans79} introduced the Wick ordered product with respect to quasi-free states. For any positive contraction operator $K$, the Wick ordered product with respect to $\varphi_K$ is defined as follows:
    \begin{align}\label{eq:Wick_ordering}
         & :a^*(h_1)\cdots a^*(h_m)a(k_n)\cdots a(k_1):_K \nonumber                                                                                                                \\
         & :=\sum_{I, J}\epsilon(I, J)\varphi_K(a^*(h_{i_p})\cdots a^*(h_{i_1})a(k_{j_1})\cdots a(k_{j_p}))a^*(h_{i'_1})\cdots a^*(h_{i'_{m-p}})a(k_{j'_{n-p}})\cdots a(k_{j'_1}),
    \end{align}
    where the summation runs over all $I=\{i_1<\cdots < i_p\}\subseteq [m]$ and $J=\{j_1<\cdots <j_p\}\subseteq [n]$ such that $|I|=|J|=p$, and we set $[m]\backslash I=\{i'_1<\cdots <i'_{m-p}\}$ and $[n]\backslash J=\{j'_1<\cdots < j'_{n-p}\}$. Moreover, $\epsilon(I, J)$ is the product of $(-1)^{(m-p)(n-p)}$ and the signs of two permutations
    \[(1, \dots, m)\mapsto (i_1, \dots, i_p, i'_1, \dots, i'_{m-p}), \quad (1, \dots, n)\mapsto (j_1, \dots, j_p, j'_1, \dots, j'_{n-p}).\]
    In particular, if $\{v_\alpha\}_{\alpha\in \mathcal{I}}$ and $K$ are the same as in Theorem \ref{thm:hamiltonian}, then we have
    \[:a^*(v_\alpha)a(v_\alpha):_K=\begin{cases}a(v_\alpha)a^*(v_\alpha)& \alpha\in \mathcal{I}_0, \\ -a^*(v_\alpha)a(v_\alpha) & \alpha\in\mathcal{I}\backslash \mathcal{I}_0.\end{cases}\]
    Thus, we have $\mathcal{A}=-\sum_{\alpha\in\mathcal{I}}m_\alpha \rho_K(:a^*(v_\alpha)a(v_\alpha):_K)$.
\end{remark}

\subsection{Discrete orthogonal polynomial ensembles of hypergeometric type}
We return to the setting of Section \ref{sec:hyper_dop}. Namely, let $\mathfrak{X}=\mathbb{Z}+a$ or $\mathbb{Z}_{\geq0}+a$ for some $a\in \mathbb{R}$, and a weight function $w\colon \mathfrak{X}\to\mathbb{R}_{>0}$ has finite moments of all orders $n\geq 0$. Moreover, we assume that there exist two polynomials $\sigma(x)$ and $\tau(x)$ such that $\deg\sigma(x)\geq2$, $\deg\tau(x)\leq1$, and
\[\Delta[\sigma(x)w(x)]=\tau(x)w(x), \quad \sigma(x)>0\]
holds on $\mathfrak{X}$. Thus, we obtain monic orthogonal polynomials $(\tilde p_n(x))_{n\geq0}$ satisfying Equation \eqref{eq:hypergeometric} with $\lambda=-m_n$ for all $n\geq0$, where $m_n:=\tau'n+\sigma''n(n-1)/2$. We also obtain the orthonormal basis $(p_n)_{n\geq0}$ for $\ell^2(\mathfrak{X})$ by $p_n(x):=\tilde p_n(x)w(x)^{1/2}/\|\tilde p_n\|_{L^2(\mathfrak{X}, w)}$. Then, they are eigenfunctions of $D$ defined by Equation \eqref{eq:differene_operator}. More precisely, we have $Dp_n=m_np_n$ for any $n\geq0$. We also assume that eigenvalues satisfy $m_0=0>m_1>m_2>\cdots$.

Let us fix $N\geq0$ and $K$ denote the orthogonal projection from $\ell^2(\mathfrak{X})$ onto the linear span of $p_0, \dots, p_{N-1}$. As we saw, $K$ is given by the normalized Christoffel--Darboux kernel (see Equation \eqref{eq:CD}), and this is a correlation kernel of the discrete orthogonal polynomial ensemble $M_{w, N}$. Since $K$ commutes with $D$, we obtain the one-parameter group $(\Gamma_K(U_t))_{t\in\mathbb{R}}$ on $\mathcal{H}_K$ by Proposition \ref{prop:second_quantization}, where $U_t=e^{\mathrm{i}tD}$. In this section, we analyze $(\Gamma_K(U_t))_{t\in\mathbb{R}}$ in terms of $L^2(\mathcal{C}_N(\mathfrak{X}), M_{w, N})$. First we show two isomorphisms $\mathcal{H}_K\cong\bigwedge^N\ell^2(\mathfrak{X})$ and $\bigwedge^N\ell^2(\mathfrak{X})\cong L^2(\mathcal{C}_N(\mathfrak{X}), M_{w, N})$.

In order to show them, we will use the idea of partitions of integers and their Frobenius coordinates. In the paper, a \emph{partition} is a sequence $\lambda=(\lambda_1\geq\lambda_2\geq\cdots)$ of nonnegative integers such that $|\lambda|:=\lambda_1+\lambda_2+\cdots <\infty$. By definition, we have $\lambda_l=0$ for sufficiently large $l\geq0$. The \emph{length} of $\lambda$ is defined as the integer $l$ such that $\lambda_l\neq 0$ and $\lambda_{l+1}=0$. We often write $\lambda=(\lambda_1, \dots, \lambda_l)$ when the length of $\lambda$ is $l$. Usually, a partition $\lambda$ is identified with its \emph{Young diagram}, which consists of boxes arranged in left-justified rows. The number of boxes in each row is $\lambda_1, \lambda_2, \dots$ from top to bottom. For instance, the partition $(5, 4, 4, 1)$ is represented by the following diagram:
\begin{center}
    \begin{tikzpicture}[scale=0.7]
        \draw (0, 0)--(5, 0)--(5, -1)--(4, -1)--(4, -3)--(1, -3)--(1, -4)--(0, -4)--(0,0);
        \draw (0, -1)--(4, -1);
        \draw (0, -2)--(4, -2);
        \draw (0, -3)--(1, -3);
        \draw (1, 0)--(1, -3);
        \draw (2, 0)--(2, -3);
        \draw (3, 0)--(3, -3);
        \draw (4, 0)--(4, -1);
    \end{tikzpicture}
\end{center}
As a convention, we use the symbol $\emptyset$ to denote the Young diagram with no boxes, or equivalently, the corresponding partition $(0, 0, \dots)$. Let $d$ denote the number of diagonal boxes. In the above example, we have $d=3$. For each row, the number of boxes to the right of the diagonal box is denoted by $\alpha_1, \alpha_2,\dots, \alpha_d$. Similarly, for each column, the number of boxes to the bottom of the diagonal box is denoted by $\beta_1, \beta_2,\dots, \beta_d$. These numbers $(\alpha|\beta)=(\alpha_1,\dots, \alpha_d|\beta_1, \dots, \beta_d)$ also describes the partition $\lambda$, and this is called the \emph{Frobenius coordinate} of $\lambda$. In the above example, the Frobenius coordinate $(\alpha|\beta)$ is $(4, 2, 1|3, 1, 0)$. See \cite[Chapter 1]{Macdonald} for more details.

Using the idea of Frobenius coordinates, we have the following:
\begin{lemma}\label{lem:first_isomorphism}
    There exists a unitary map $v\colon \mathcal{H}_K\to \bigwedge^N\ell^2(\mathfrak{X})$ such that $vp_{(\alpha| \beta)}=p_\lambda$ for any $\alpha_1>\cdots >\alpha_d\geq0$ and $N-1\geq \beta_1>\cdots >\beta_d\geq0$, where
    \[p_{(\alpha| \beta)}:=\rho_K(a^*(p_{\alpha_1+N})\cdots a^*(p_{\alpha_d+N})a(p_{N-1-\beta_1})\cdots a(p_{N-1-\beta_d}))\Omega_K,\]
    \[p_\lambda:=p_{\lambda_1+N-1}\wedge p_{\lambda_N+N-2}\wedge \cdots \wedge p_{\lambda_N}\]
    and $\lambda=(\lambda_1\geq\cdots\geq\lambda_N)$ is a partition whose Frobenius coordinate is $(\alpha|\beta)$.
\end{lemma}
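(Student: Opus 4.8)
The plan is to exhibit both families $\{p_{(\alpha|\beta)}\}$ and $\{p_\lambda\}$ as orthonormal bases indexed by the same combinatorial data---partitions of length at most $N$---and to define $v$ as the unitary matching them up. The first task is to make the left-hand vectors $p_{(\alpha|\beta)}$ explicit in the concrete GNS realization of Section~\ref{subsec:GICAR}. Since $K$ is the orthogonal projection onto $\mathrm{span}\{p_0,\dots,p_{N-1}\}$, we have $S=(1-K)^{1/2}=1-K$ and $T=K^{1/2}=K$, so that $Sp_n=p_n,\ Tp_n=0$ for $n\ge N$, while $Sp_n=0,\ Tp_n=p_n$ for $n\le N-1$. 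Feeding this into $\rho_K(a(h))=a(Sh)\otimes\bar\Gamma+1\otimes a^*(\overline{Th})$ and its adjoint, the creation operators $a^*(p_{\alpha_j+N})$ act only on the first tensor factor (dressed by $\bar\Gamma$), while the annihilation operators $a(p_{N-1-\beta_j})$ act only on the second factor, as creation operators. Applying the whole string to $\Omega_K=\Omega\otimes\bar\Omega$ and collecting the signs produced by the involution $\bar\Gamma=\bar\Gamma_{-1}$ yields
\[
p_{(\alpha|\beta)}=\pm\, p_{\alpha_1+N}\wedge\cdots\wedge p_{\alpha_d+N}\otimes\overline{p_{N-1-\beta_1}}\wedge\cdots\wedge\overline{p_{N-1-\beta_d}},
\]
a vector in the $(d,d)$-summand $\bigwedge^d\overline{\mathrm{Ran}}(S)\otimes\bigwedge^d\overline{\overline{\mathrm{Ran}}(T)}$ of $\mathcal H_K$.

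By Lemma~\ref{lem:GNS_space}, $\mathcal H_K=\bigoplus_{n=0}^{N}\bigwedge^n\overline{\mathrm{Ran}}(S)\otimes\bigwedge^n\overline{\overline{\mathrm{Ran}}(T)}$, the sum truncating at $N$ because $\overline{\overline{\mathrm{Ran}}(T)}$ is $N$-dimensional. Orthonormal bases of $\overline{\mathrm{Ran}}(S)$ and $\overline{\overline{\mathrm{Ran}}(T)}$ are $\{p_{\alpha+N}\}_{\alpha\ge0}$ and $\{\overline{p_{N-1-\beta}}\}_{0\le\beta\le N-1}$, respectively, so the tensor products of exterior monomials indexed by $\alpha_1>\cdots>\alpha_d\ge0$ and $N-1\ge\beta_1>\cdots>\beta_d\ge0$ form an orthonormal basis of the $(d,d)$-summand. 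Hence, as $d$ ranges from $0$ to $N$, the vectors $p_{(\alpha|\beta)}$ (each being $\pm$ such a monomial) form an orthonormal basis of $\mathcal H_K$.

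The combinatorial heart is the Frobenius bijection. Sending $\lambda\mapsto\{\lambda_i+N-i\}_{i=1}^{N}$ identifies partitions of length at most $N$ with $N$-element subsets of $\mathbb Z_{\ge0}$, and it sends $\lambda$ to the index set of $p_\lambda$; since distinct $\lambda$ give distinct $N$-subsets, $\{p_\lambda\}$ is an orthonormal basis of $\bigwedge^N\ell^2(\mathfrak X)$. Splitting this index set at the Fermi level $N$, the elements $\ge N$ are exactly $\{\lambda_i+N-i:\lambda_i\ge i\}=\{\alpha_i+N:1\le i\le d\}$ with arm lengths $\alpha_i=\lambda_i-i$, while the elements missing from $\{0,\dots,N-1\}$ are exactly $\{N-1-\beta_j:1\le j\le d\}$ with leg lengths $\beta_j=\lambda'_j-j$; the constraint $\beta_1\le N-1$ is precisely $\ell(\lambda)\le N$ (see \cite[Chapter 1]{Macdonald}). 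Thus the pair $(\alpha|\beta)$ attached to $p_{(\alpha|\beta)}$ runs over exactly the Frobenius coordinates of partitions of length at most $N$, matching the index set of $\{p_\lambda\}$. Declaring $vp_{(\alpha|\beta)}:=p_\lambda$ therefore extends to a unitary $\mathcal H_K\to\bigwedge^N\ell^2(\mathfrak X)$.

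The only delicate point is bookkeeping: one must verify that the particle/hole decomposition of $\{\lambda_i+N-i\}$ across the Fermi level $N$ reproduces precisely the arm and leg coordinates, and that the signs generated by $\bar\Gamma$ in the explicit realization do not spoil orthonormality---which they cannot, since sign changes of basis vectors preserve an orthonormal family. I expect these signs and the exact alignment of the shift by $N$ with the arm/leg indexing to be the main place where care is needed, but no genuine obstacle arises beyond this accounting.
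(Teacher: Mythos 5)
Your argument is correct, and it is exactly the one the paper intends: the paper states this lemma without proof (prefacing it only with ``Using the idea of Frobenius coordinates, we have the following''), relying implicitly on the explicit GNS realization of Lemma \ref{lem:GNS_space} with $S=1-K$, $T=K$ and on the standard particle/hole reading of $\{\lambda_i+N-i\}$ across the level $N$. Your write-up supplies precisely those omitted details --- the identification of $p_{(\alpha|\beta)}$ with $\pm$ an elementary wedge monomial in the $(d,d)$-summand, the truncation of the direct sum at $n=N$, and the Frobenius bijection matching the two orthonormal bases --- and the sign ambiguity from $\bar\Gamma$ is indeed harmless for unitarity.
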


Next, we show that $\bigwedge^N\ell^2(\mathfrak{X})\cong L^2(\mathcal{C}_N(\mathfrak{X}), M_{w, N})$. We remark that $M_{w, N}$ is $N$-point process, that is, its support is contained in $\mathcal{C}_N(\mathfrak{X})$. We define
\[\mathfrak{X}^{(N)}:=\{(x_1, \dots, x_N)\in \mathfrak{X}^N\mid x_1<\cdots < x_N\},\]
and $\mathfrak{X}^{(N)}$ is identified with $\mathcal{C}_N(\mathfrak{X})$ by the correspondence $(x_1, \dots, x_N)\in\mathfrak{X}^{(N)}\leftrightarrow \omega\in\mathcal{C}_N(\mathfrak{X})$ given as $\omega_x=1$ only if $x=x_1, \dots, x_N$. Under the identification $\mathfrak{X}^{(N)}\cong \mathcal{C}_N(\mathfrak{X})$, we also identify $M_{w, N}$ with the corresponding probability measure on $\mathfrak{X}^{(N)}$. Namely, our goal here is to show $\bigwedge^N\ell^2(\mathfrak{X})\cong L^2(\mathfrak{X}^{(N)}, M_{w, N})$ (see Lemma \ref{lem:second_isomorphism}).

\begin{definition}\label{def:second_isomorphism}
    For any $h_1, \dots, h_N\in \ell^2(\mathfrak{X})$ we define a function $F_{h_1, \dots, h_N}$ on $\mathfrak{X}^{(N)}$ by
    \[F_{h_1, \dots, h_N}(x_1, \dots, x_N):=\frac{{Z_{w, N}^{1/2}}}{\prod_{j=1}^Nw(x_j)^{1/2}}\frac{\det[h_i(x_j)]_{i, j=1}^N}{V_N(x_1, \dots, x_N)}\]
    for any $(x_1, \dots, x_N)\in\mathfrak{X}^{(N)}$, where $V_N(x_1, \dots, x_N):=\prod_{1\leq i<j\leq N}(x_i-x_j)$. In particular, we define $F_\lambda:=F_{p_{\lambda_1+N-1}, p_{\lambda_2+N-2}, \dots, p_{\lambda_N}}$ for any partition $\lambda=(\lambda_1\geq \cdots \geq\lambda_N)$ with length $\leq N$.
\end{definition}
We remark that $\det[h_i(x_j)]_{i, j=1}^N$ and $V_N(x_1, \dots, x_N)$ in the right-hand side depend on the order of $x_1, \dots, x_N$, but $F_{h_1, \dots, h_N}$ itself is independent of the order of them. On the other hand, $F_{h_1, \dots, h_N}$ is anti-symmetric in $h_1, \dots, h_N$, and hence this is determined by $h_1\wedge \cdots \wedge h_N$. We give a few examples as follows.

\begin{example}\label{ex:constant_one}
    We recall that $Z_{w, N}$ is the normalization constant of $M_{w, N}$ (see Equation \eqref{eq:normalized_constant}), and it is equal to $\|\tilde p_0\|_{L^2(\mathfrak{X}, w)}\cdots \|\tilde p_{N-1}\|_{L^2(\mathfrak{X}, w)}$. Since $\tilde p_{i-1}(x)$ is a monic polynomial of degree $i-1$, we have $\det[\tilde p_{i-1}(x_j)]_{i, j=1}^N=V_N(x_1, \dots, x_N)$. Therefore, we have $F_\emptyset=F_{p_{N-1}, \dots, p_0}\equiv 1$.
\end{example}

\begin{example}
    Let $(e_x)_{x\in \mathfrak{X}}$ be the canonical basis for $\ell^2(\mathfrak{X})$. We have
    \[F_{e_{x_1}, \dots, e_{x_N}}(y_1, \dots, y_N)=\begin{cases}Z_{w, N}^{1/2}V_N(x_1, \dots, x_N)^{-1}\prod_{j=1}^Nw(x_j)^{-1/2} & (y_i\text{'s in } \{x_1, \dots, x_N\})\\0& (\text{otherwise}.)\end{cases}\]
\end{example}

\begin{lemma}\label{lem:second_isomorphism}
    For any $h_1, \dots, h_N\in \ell^2(\mathfrak{X})$ the $F_{h_1, \dots, h_N}$ lies in $L^2(\mathfrak{X}^{(N)}, M_w)$, and the linear mapping $F\colon h_1\wedge \cdots \wedge h_N\in\bigwedge \ell^2(\mathfrak{X})\mapsto F_{h_1, \dots, h_N}\in L^2(\mathfrak{X}^{(N)}, M_w)$ is unitary.
\end{lemma}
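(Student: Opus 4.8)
The plan is to exhibit an explicit orthonormal basis on each side and check that $F$ carries one to the other; this simultaneously yields that every $F_{h_1,\dots,h_N}$ lies in $L^2$, that $F$ is well defined and isometric, and that it is onto. On the domain side, since $\{e_x\}_{x\in\mathfrak X}$ is an orthonormal basis of $\ell^2(\mathfrak X)$, the wedges $e_{x_1}\wedge\cdots\wedge e_{x_N}$ with $x_1<\cdots<x_N$ form an orthonormal basis of $\bigwedge^N\ell^2(\mathfrak X)$ (this is built into the Fock-space convention, under which $\langle h_1\wedge\cdots\wedge h_N,k_1\wedge\cdots\wedge k_N\rangle=\det[\langle h_i,k_j\rangle]_{i,j=1}^N$). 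On the target side, because $w>0$ and $V_N^2>0$ on distinct points, the measure $M_{w,N}$ has full support on $\mathfrak X^{(N)}$, so the normalized point-indicators $\{\mathbbm{1}_{\{(x_1,\dots,x_N)\}}/M_{w,N}(\{(x_1,\dots,x_N)\})^{1/2}\}$ form an orthonormal basis of $L^2(\mathfrak X^{(N)},M_{w,N})$.

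First I would compute $F_{e_{x_1},\dots,e_{x_N}}$ explicitly, as in the example above: it is supported on the single ordered tuple $(x_1,\dots,x_N)$ and equals $Z_{w,N}^{1/2}V_N(x_1,\dots,x_N)^{-1}\prod_{j}w(x_j)^{-1/2}$ there. A one-line computation using the definition of $M_{w,N}$ then shows $\|F_{e_{x_1},\dots,e_{x_N}}\|_{L^2(M_{w,N})}^2=1$, since the $Z_{w,N}$, $V_N^2$, and $\prod_j w(x_j)$ factors cancel exactly; and distinct ordered tuples have disjoint supports, so these functions are pairwise orthogonal. Each is a nonzero scalar multiple of a point-indicator, so together they exhaust the orthonormal basis of the target described above. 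Hence the assignment $e_{x_1}\wedge\cdots\wedge e_{x_N}\mapsto F_{e_{x_1},\dots,e_{x_N}}$ extends to a unitary operator $F$.

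Next I would verify that this unitary is exactly the map in the statement, i.e.\ $F(h_1\wedge\cdots\wedge h_N)=F_{h_1,\dots,h_N}$ for arbitrary $h_i\in\ell^2(\mathfrak X)$. Expanding each $h_i=\sum_x h_i(x)e_x$ and using that both $(h_1,\dots,h_N)\mapsto h_1\wedge\cdots\wedge h_N$ and $(h_1,\dots,h_N)\mapsto F_{h_1,\dots,h_N}$ are multilinear and alternating (the latter because the prefactor does not depend on the $h_i$ and $\det[h_i(x_j)]$ is alternating), both sides expand as $\sum_{x_1<\cdots<x_N}\det[h_i(x_j)]_{i,j}\,(\,\cdot\,)$ against the respective orthonormal bases. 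The coefficient sequence is square-summable because $\sum_{x_1<\cdots<x_N}|\det[h_i(x_j)]|^2=\|h_1\wedge\cdots\wedge h_N\|^2<\infty$, so the series converges in each space; continuity of the bounded operator $F$ lets me pass $F$ through the sum, and matching term by term gives the claim and, in particular, $F_{h_1,\dots,h_N}\in L^2$.

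The step I expect to require the most care is the coefficient identity $\sum_{x_1<\cdots<x_N}|\det[h_i(x_j)]|^2=\|h_1\wedge\cdots\wedge h_N\|^2$, together with the interchange of the countable sum over $\mathfrak X^{(N)}$ with $F$. For the identity I would symmetrize: since the summand is symmetric in $(x_1,\dots,x_N)$ and vanishes when two arguments coincide, $\sum_{x_1<\cdots<x_N}|\det[h_i(x_j)]|^2=\tfrac{1}{N!}\sum_{(x_1,\dots,x_N)\in\mathfrak X^N}\det[h_i(x_j)]\,\overline{\det[h_i(x_j)]}$, and the Andr\'eief/Gram identity evaluates the right-hand side as $\det[\langle h_i,h_j\rangle]_{i,j}=\|h_1\wedge\cdots\wedge h_N\|^2$. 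This also furnishes a basis-free route: the same computation shows directly $\|F_{h_1,\dots,h_N}\|_{L^2(M_{w,N})}^2=\|h_1\wedge\cdots\wedge h_N\|^2$, whence polarization gives $\langle F_{h_1,\dots,h_N},F_{k_1,\dots,k_N}\rangle=\det[\langle h_i,k_j\rangle]$; this forces $F$ to be well defined and isometric on the span of decomposables, and surjectivity again follows from the $F_{e_{x_1},\dots,e_{x_N}}$ spanning the target.
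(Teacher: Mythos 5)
Your proposal is correct and rests on the same two ingredients as the paper's own proof: the explicit computation of $F_{e_{x_1},\dots,e_{x_N}}$ (giving density/surjectivity) and the symmetrization--Andr\'eief identity $\frac{1}{N!}\sum_{x_1,\dots,x_N}\det[h_i(x_j)]\overline{\det[k_i(x_j)]}=\det[\langle h_i,k_j\rangle]$ (giving the isometry). The ``basis-free route'' you sketch in your final paragraph is essentially verbatim the paper's argument, so the only difference is your choice to organize the proof as an orthonormal-basis-to-orthonormal-basis matching followed by a multilinearity check, which changes nothing of substance.
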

\begin{proof}
    By the above example, the range of $F$ is dense in $L^2(\mathfrak{X}^{(N)}, M_{w, N})$. Thus, it suffices to show that $F$ is isometry. For any $h_1, \dots, h_N, k_1, \dots, k_N\in \ell^2(\mathfrak{X})$ we have
    \begin{align*}
        \langle F_{h_1, \dots, h_N}, F_{k_1, \dots, k_N}\rangle_{L^2(\mathfrak{X}^{(N)}, M_{w, N})}
         & =\sum_{\underline{x}\in\mathfrak{X}^{(N)}}F_{h_1, \dots, h_N}(\underline{x})\overline{F_{k_1, \dots, k_N}(\underline{x})}M_w(\underline{x})                 \\
         & =\sum_{\underline{x}\in\mathfrak{X}^{(N)}}\det[h_i(x_j)]_{i, j=1}^N\det[\overline{k_i(x_j)}]_{i, j=1}^N                                                     \\
         & =\frac{1}{N!}\sum_{x_1, \dots, x_N\in\mathfrak{X}}\det[h_i(x_j)]_{i, j=1}^N\det[\overline{k_i(x_j)}]_{i, j=1}^N                                             \\
         & =\frac{1}{N!}\sum_{\sigma, \tau\in S(N)}\mathrm{sgn}(\sigma)\mathrm{sgn}(\tau)\prod_{i=1}^N\langle h_{\sigma(i)}, k_{\tau(i)}\rangle_{\ell^2(\mathfrak{X})} \\
         & =\frac{1}{N!}\sum_{\sigma\in S(N)}\mathrm{sgn}(\sigma)\det[\langle h_{\sigma(i)}, k_j\rangle_{\ell^2(\mathfrak{X})}]_{i, j=1}^N                             \\
         & = \det[\langle h_i, k_j\rangle_{\ell^2(\mathfrak{X})}]_{i, j=1}^N                                                                                           \\
         & = \langle h_1\wedge \cdots \wedge h_N, k_1\wedge \cdots \wedge k_N\rangle.
    \end{align*}
\end{proof}

Composing two unitary maps $v$, $F$ given in Lemmas \ref{lem:first_isomorphism}, \ref{lem:second_isomorphism}, we obtain the unitary map $V:=F\circ v\colon \mathcal{H}_K\to L^2(\mathfrak{X}^{(N)}, M_{w, N})$. Let us recall that $\mathcal{H}_K$ admits the $*$-representation $(\rho_K, \mathcal{H}_K)$ of $\mathfrak{I}(\mathfrak{X}):=\mathfrak{I}(\ell^2(\mathfrak{X}))$. Hence, $L^2(\mathfrak{X}^{(N)}, M_{w, N})$ also admits a $*$-representation of $\mathfrak{I}(\mathfrak{X})$ given by $\pi_K:=\mathrm{Ad}V\circ \rho_K$. Moreover, since $\mathbbm{1}=F_\emptyset=V\Omega_K$, for any $x\in\mathfrak{I}(\mathfrak{X})$ we have
\[\langle \pi_K(x)\mathbbm{1}, \mathbbm{1}\rangle=\langle \rho_K(x)\Omega_K, \Omega_K\rangle=\varphi_K(x),\]
where $\mathbbm{1}$ is the constant function equal to 1.

\begin{remark}
    The isomorphism $\mathcal{H}_K\cong L^2(\mathfrak{X}^{(N)}, M_{w, N})$ is implicitly suggested in \cite{Olshanski20}. In the paper \cite{Olshanski20}, Olshanski introduced the concept of \emph{perfectness} for determinantal point processes. Roughly speaking, a determinantal point process $M$ on $\mathfrak{X}$ is said to be perfect if $L^2(\mathcal{C}(\mathfrak{X}), M)$ admits the GNS representation of $\mathfrak{I}(\mathfrak{X})$ associated with the quasi-free state obtained from the correlation kernel of $M$. The class of perfect measures is strictly smaller than the class of determinantal point processes. However, Olshanski showed that discrete orthogonal polynomial ensembles (on $\mathfrak{X}\subset\mathbb{R}$) are perfect. On the other hand, the GNS representation is unique up to unitary equivalent. Hence, our GNS space $\mathcal{H}_K$ should be isomorphic to $L^2(\mathfrak{X}^{(N)}, M_{w, N})$.
\end{remark}

By Equation \eqref{eq:action_U_t}, \eqref{eq:eigenvector}, the vectors $p_{(\alpha|\beta)}$ (in Lemma \ref{lem:first_isomorphism}) are eigenvectors of $\Gamma_K(U_t)$ and $d\Gamma_K(D)$, where $U_t=e^{\mathrm{i}t D}$. In what follows, we use the same symbols $\Gamma_K(U_t)$ and $d\Gamma_K(D)$ to denote $V\Gamma_K(U_t)V^*$ and $Vd\Gamma_K(D)V^*$ acting on $L^2(\mathfrak{X}^{(N)}, M_{w, N})$, respectively. They have eigenfunctions $F_\lambda$ for all $\lambda=(\lambda_1\geq\cdots \geq \lambda_N)$. The following is useful to describe those eigenvalues in terms of partitions $\lambda$.

\begin{lemma}\label{lem:eigenvalue}
    Let $\lambda=(\lambda_1\geq\cdots \geq \lambda_N)$ be a partition and $m_\lambda:=\sum_{i=1}^Nm_{\lambda_i+N-i}$. If the Frobenius coordinate of $\lambda$ is $(\alpha|\beta)=(\alpha_1, \dots, \alpha_n| \beta_1, \dots, \beta_n)$, then the following holds true:
    \[m_\lambda-m_\emptyset=\sum_{i=1}^n(m_{\alpha_i+N}-m_{N-1-\beta_i}).\]
\end{lemma}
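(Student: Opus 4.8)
The plan is to read off both sides of the identity as sums over sets of ``particle positions'' and to reduce the statement to a purely combinatorial fact about Frobenius coordinates. Writing $L(\lambda):=\{\lambda_i+N-i\mid 1\le i\le N\}$, the definition of $m_\lambda$ gives $m_\lambda=\sum_{p\in L(\lambda)}m_p$, and since $\emptyset=(0,\dots,0)$ has $L(\emptyset)=\{0,1,\dots,N-1\}$, also $m_\emptyset=\sum_{p\in L(\emptyset)}m_p$. Both $L(\lambda)$ and $L(\emptyset)$ are $N$-element subsets of $\mathbb{Z}_{\ge0}$ (note $\lambda_i+N-i\ge\lambda_N\ge0$ because $\mathrm{length}(\lambda)\le N$), so the common terms cancel and
\[m_\lambda-m_\emptyset=\sum_{p\in L(\lambda)\setminus L(\emptyset)}m_p-\sum_{p\in L(\emptyset)\setminus L(\lambda)}m_p.\]
It therefore suffices to identify $L(\lambda)\setminus L(\emptyset)$ with $\{\alpha_i+N\}_{i=1}^n$ and $L(\emptyset)\setminus L(\lambda)$ with $\{N-1-\beta_i\}_{i=1}^n$.

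First I would handle the ``added'' positions. Since every element of $L(\emptyset)$ is $<N$, we have $L(\lambda)\setminus L(\emptyset)=L(\lambda)\cap\{N,N+1,\dots\}$, and $\lambda_i+N-i\ge N$ holds exactly when $\lambda_i\ge i$. Letting $d$ be the number of diagonal boxes (the Durfee square size), $\lambda_i\ge i$ is equivalent to $1\le i\le d$, and $d=n$. For such $i$ one has $\lambda_i+N-i=(\lambda_i-i)+N=\alpha_i+N$ by the identification $\alpha_i=\lambda_i-i$ of the Frobenius arm lengths; as $\lambda_i-i$ is strictly decreasing these are $n$ distinct integers. Hence $L(\lambda)\setminus L(\emptyset)=\{\alpha_i+N\mid i=1,\dots,n\}$.

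The harder part, and the main obstacle, is the ``removed'' positions, that is, the holes of $L(\lambda)$ inside $\{0,\dots,N-1\}$. Here I would invoke the classical particle--hole duality for partitions, namely the disjoint decomposition $\mathbb{Z}=\{\lambda_i-i\mid i\ge1\}\sqcup\{j-1-\lambda'_j\mid j\ge1\}$, where $\lambda'$ is the conjugate partition (see \cite[Chapter 1]{Macdonald}). Writing $q=N-s$ with $s\in\{1,\dots,N\}$, the position $q$ lies in $L(\lambda)$ iff $\lambda_i-i=-s$ for some $i$, i.e.\ iff $-s\in\{\lambda_i-i\mid i\ge1\}$; by the duality this fails iff $-s\in\{j-1-\lambda'_j\mid j\ge1\}$, which rearranges to $\lambda'_j-j=s-1=N-1-q$ for some $j$. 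Thus $q\in\{0,\dots,N-1\}$ is a hole iff $N-1-q$ is a nonnegative value of $\lambda'_j-j$. Since the nonnegative values of $\lambda'_j-j$ are precisely the Frobenius leg lengths $\beta_1>\cdots>\beta_n\ge0$ (attained for $j=1,\dots,d=n$), the holes are exactly $\{N-1-\beta_i\mid i=1,\dots,n\}$, so $L(\emptyset)\setminus L(\lambda)=\{N-1-\beta_i\mid i=1,\dots,n\}$.

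Substituting the two set identifications into the telescoped expression yields $m_\lambda-m_\emptyset=\sum_{i=1}^n m_{\alpha_i+N}-\sum_{i=1}^n m_{N-1-\beta_i}$, which is the claim. The only nontrivial ingredient is the duality used in the third step; everything else is bookkeeping with the Durfee square and the definitions $\alpha_i=\lambda_i-i$, $\beta_i=\lambda'_i-i$. I would also record the edge ranges $\alpha_d\ge0$ and $\beta_i\le\mathrm{length}(\lambda)-1\le N-1$ (so that $N-1-\beta_i\in\{0,\dots,N-1\}$), both of which follow at once from $\mathrm{length}(\lambda)\le N$.
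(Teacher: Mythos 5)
Your proof is correct, but it takes a genuinely different route from the paper's. The paper proves the identity by a direct computation with the \emph{modified} Frobenius coordinates $\alpha_i'=\alpha_i+\tfrac12$, $\beta_i'=\beta_i+\tfrac12$: it expands $m_{\alpha_i+N}-m_{N-1-\beta_i}$ using the quadratic form $m_k=\tau'k+\sigma''k(k-1)/2$ and then invokes the two standard identities $\sum_i(\alpha_i'+\beta_i')=|\lambda|$ and $\sum_i(\alpha_i'^2-\beta_i'^2)=\sum_{i=1}^N\bigl\{(\lambda_i-i+\tfrac12)^2-(-i+\tfrac12)^2\bigr\}$ (citing \cite[Proposition 6.7]{BO:book}) to convert the result into $\sum_{i=1}^N(m_{\lambda_i+N-i}-m_{N-i})$. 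You instead prove the stronger, purely set-theoretic statement that the symmetric difference of $L(\lambda)=\{\lambda_i+N-i\}$ and $L(\emptyset)=\{0,\dots,N-1\}$ consists of the added positions $\{\alpha_i+N\}$ and the holes $\{N-1-\beta_i\}$, via the particle--hole duality $\mathbb{Z}=\{\lambda_i-i\}\sqcup\{j-1-\lambda_j'\}$, and then sum $m$ over these sets. Your argument has the advantage of working for an \emph{arbitrary} sequence $(m_k)$ rather than exploiting that $m_k$ is quadratic in $k$, and it makes transparent the same combinatorics that underlies the unitary $v$ of Lemma \ref{lem:first_isomorphism} (which vectors $p_{\alpha_i+N}$ are created and which $p_{N-1-\beta_i}$ are annihilated to pass from $\emptyset$ to $\lambda$); the paper's computation is shorter on the page but leans on an external identity and on the specific hypergeometric form of the eigenvalues. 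All the steps you flag as needing care (the Durfee square count $d=n$, the bound $\beta_i\le N-1$, and the restriction of the duality to indices $i\le N$, which is harmless since $\lambda_i-i<-N$ for $i>N$) do go through.
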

\begin{proof}
    Let $\alpha_i':=\alpha_i+1/2$ and $\beta_i':=\beta_i+1/2$, where $(\alpha'|\beta')=(\alpha_1', \dots, \alpha_n'|\beta_1', \dots, \beta_n')$ is called the \emph{modified} Frobenius coordinate. By definition, we have $\sum_{i=1}^n(\alpha_i'+\beta_i')=|\lambda|:=\sum_{i=1}^N\lambda_i$. Moreover, by \cite[Proposition 6.7]{BO:book}, $\sum_{i=1}^n(\alpha_i'^2-\beta_i'^2)=\sum_{i=1}^N\left\{\left(\lambda_i-i+\frac{1}{2}\right)^2-\left(-i+\frac{1}{2}\right)^2\right\}$ holds. Thus, we have
    \begin{align*}
         & \sum_{i=1}^n(m_{\alpha_i+N}-m_{N-1-\beta_i})                                                                                                                                                                 \\
         & =\frac{\alpha''}{2}\sum_{i=1}^n\left\{\left(\alpha_i'+N-\frac{1}{2}\right)\left(\alpha_i'+N-\frac{3}{2}\right)-\left(\beta_i'-N+\frac{1}{2}\right)\left(\beta_i'-N+\frac{3}{2}\right)\right\}+\tau'|\lambda| \\
         & = \frac{\alpha''}{2}\sum_{i=1}^n(\alpha_i'^2-\beta_i'^2)+(\alpha''(N-1)+\tau')|\lambda|                                                                                                                      \\
         & =\frac{\alpha''}{2}\sum_{i=1}^N\left\{\left(\lambda_i-i+\frac{1}{2}\right)^2-\left(-i+\frac{1}{2}\right)^2\right\}+(\alpha''(N-1)+\tau')\sum_{i=1}^N\lambda_i                                                \\
         & =\sum_{i=1}^N(m_{\lambda_i+N-i}-m_{N-i}).
    \end{align*}
\end{proof}

By Lemma \ref{lem:eigenvalue} and Equations \eqref{eq:action_U_t}, \eqref{eq:eigenvector}, we obtain the following:

\begin{proposition}\label{prop:capsule}
    For any partition $\lambda$ and $t\in\mathbb{R}$ we have
    \[\Gamma_K(U_t)F_\lambda=e^{\mathrm{i}t(m_\lambda-m_\emptyset)}F_\lambda,\quad d\Gamma_K(D)F_\lambda=(m_\lambda-m_\emptyset) F_\lambda.\]
\end{proposition}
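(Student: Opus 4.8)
The plan is to derive both identities by transporting the eigenvalue relations already established on $\mathcal{H}_K$ through the unitary $V=F\circ v$, and then rewriting the resulting eigenvalue by Lemma \ref{lem:eigenvalue}.

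First I would record that $F_\lambda=Vp_{(\alpha|\beta)}$ whenever $(\alpha|\beta)$ is the Frobenius coordinate of a partition $\lambda$ of length $\leq N$. Indeed, $vp_{(\alpha|\beta)}=p_\lambda$ by Lemma \ref{lem:first_isomorphism}, while $F(p_\lambda)=F_{p_{\lambda_1+N-1},\dots,p_{\lambda_N}}=F_\lambda$ by Definition \ref{def:second_isomorphism} and Lemma \ref{lem:second_isomorphism}; composing gives $Vp_{(\alpha|\beta)}=F(v(p_{(\alpha|\beta)}))=F_\lambda$. Since the operators denoted $\Gamma_K(U_t)$ and $d\Gamma_K(D)$ on $L^2(\mathfrak{X}^{(N)},M_{w,N})$ are by construction $V\Gamma_K(U_t)V^*$ and $Vd\Gamma_K(D)V^*$, it suffices to compute their action on $p_{(\alpha|\beta)}\in\mathcal{H}_K$ and then apply $V$.

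Next I would identify $p_{(\alpha|\beta)}$ with a vector of the general formalism. In the present setting $A=D$, $v_\alpha=p_\alpha$, $\mathcal{I}=\mathbb{Z}_{\geq0}$, and $\mathcal{I}_0=\{0,\dots,N-1\}$; the creation indices $\alpha_i+N$ lie in $\mathcal{I}\backslash\mathcal{I}_0$ and the annihilation indices $N-1-\beta_i$ lie in $\mathcal{I}_0$, so $p_{(\alpha|\beta)}$ is precisely $v_{\alpha_1+N,\dots,\alpha_d+N;\,N-1-\beta_1,\dots,N-1-\beta_d}$ in the notation of Lemma \ref{lem:core}. Equations \eqref{eq:action_U_t} and \eqref{eq:eigenvector} (with $Dp_n=m_np_n$) then give that $p_{(\alpha|\beta)}$ is an eigenvector of $\Gamma_K(U_t)$ with eigenvalue $\exp(\mathrm{i}t\sum_i(m_{\alpha_i+N}-m_{N-1-\beta_i}))$ and of $d\Gamma_K(D)$ with eigenvalue $\sum_i(m_{\alpha_i+N}-m_{N-1-\beta_i})$.

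Finally, Lemma \ref{lem:eigenvalue} rewrites this eigenvalue sum as $m_\lambda-m_\emptyset$, and applying $V$ together with $F_\lambda=Vp_{(\alpha|\beta)}$ yields the two asserted equations. There is no substantial obstacle here: the genuine arithmetic content, namely the passage from the Frobenius-indexed sum to $m_\lambda-m_\emptyset$, has already been isolated in Lemma \ref{lem:eigenvalue}, so the only care needed is the bookkeeping of the index shifts $\alpha_i\mapsto\alpha_i+N$ and $\beta_i\mapsto N-1-\beta_i$, together with the verification that $p_{(\alpha|\beta)}$ matches a single core vector exactly rather than a linear combination.
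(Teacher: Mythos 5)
Your proposal is correct and follows essentially the same route as the paper, which derives Proposition \ref{prop:capsule} directly by combining Equations \eqref{eq:action_U_t} and \eqref{eq:eigenvector} with Lemma \ref{lem:eigenvalue} under the identification $F_\lambda=Vp_{(\alpha|\beta)}$. You have merely made explicit the bookkeeping (the index shifts and the matching of $p_{(\alpha|\beta)}$ with a core vector of Lemma \ref{lem:core}) that the paper leaves implicit.
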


We remark that, by Lemma \ref{lem:core}, the linear span of the functions $F_\lambda$ is a core for $d\Gamma_K(D)$. Moreover, as a consequence of Theorem \ref{thm:hamiltonian}, we can describe $d\Gamma_K(D)$ in terms of creation and annihilation operators as follows:
\begin{corollary}\label{cor:orth_poly_cre_ann}
    Let
    \[\mathcal{A}:=\sum_{n=N}^\infty m_n\pi_K(a^*(p_n)a(p_n))-\sum_{n=0}^{N-1}m_n\pi_K(a(p_n)a^*(p_n))\]
    defined on the linear span of the $p_{(\alpha|\beta)}$ in Lemma \ref{lem:first_isomorphism}. Then its closure coincides with $d\Gamma_K(D)$.
\end{corollary}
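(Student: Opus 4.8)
The plan is to recognize the present situation as a direct specialization of the abstract setup behind Theorem \ref{thm:hamiltonian} and then to transport the resulting identity through the unitary $V=F\circ v$. Concretely, I would take $\mathcal{H}=\ell^2(\mathfrak{X})$, let the self-adjoint operator $A$ be the difference operator $D$ of Equation \eqref{eq:differene_operator}, and let the orthonormal eigenbasis $(v_\alpha)_{\alpha\in\mathcal{I}}$ be $(p_n)_{n\geq0}$. Thus $\mathcal{I}=\mathbb{Z}_{\geq0}$, $Dp_n=m_np_n$, and (as assumed in Section \ref{sec:unitary_dynamics}) $m_0=0>m_1>m_2>\cdots$. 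The projection $K$ onto the span of $p_0,\dots,p_{N-1}$ is precisely the projection onto $\overline{\mathrm{span}}\{v_\alpha\}_{\alpha\in\mathcal{I}_0}$ for $\mathcal{I}_0:=\{0,1,\dots,N-1\}$, so that $\mathcal{I}\backslash\mathcal{I}_0=\{N,N+1,\dots\}$. With these identifications all hypotheses of Theorem \ref{thm:hamiltonian} hold verbatim.

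Under this dictionary the operator $\mathcal{A}$ of Theorem \ref{thm:hamiltonian} is
\[\mathcal{A}_0:=\sum_{n=N}^\infty m_n\rho_K(a^*(p_n)a(p_n))-\sum_{n=0}^{N-1}m_n\rho_K(a(p_n)a^*(p_n)),\]
acting on $\mathcal{H}_K$ on the linear span of the vectors $v_{\alpha_1,\dots,\alpha_d;\beta_1,\dots,\beta_d}$. The next step is to match these with the $p_{(\alpha|\beta)}$ of Lemma \ref{lem:first_isomorphism}: writing the creation indices as $\alpha_i+N$ (with $\alpha_i\geq0$, so $\alpha_i+N\in\mathcal{I}\backslash\mathcal{I}_0$) and the annihilation indices as $N-1-\beta_i$ (with $0\leq\beta_i\leq N-1$, so $N-1-\beta_i\in\mathcal{I}_0$), the strictly decreasing Frobenius data $\alpha_1>\cdots>\alpha_d$ and $\beta_1>\cdots>\beta_d$ parametrize, up to the sign coming from the canonical anticommutation relation, exactly the nonzero vectors $v_{\alpha_1,\dots,\alpha_d;\beta_1,\dots,\beta_d}$. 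Hence the two linear spans coincide, and Theorem \ref{thm:hamiltonian} gives $\overline{\mathcal{A}_0}=d\Gamma_K(D)$ as operators on $\mathcal{H}_K$.

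It then remains to push this equality through the unitary $V=F\circ v\colon\mathcal{H}_K\to L^2(\mathfrak{X}^{(N)},M_{w,N})$. Since $\pi_K=\mathrm{Ad}\,V\circ\rho_K$ by definition, the operator $\mathcal{A}$ of the corollary is exactly $V\mathcal{A}_0V^*$; moreover $V$ carries the core $\mathrm{span}\{p_{(\alpha|\beta)}\}$ onto $\mathrm{span}\{F_\lambda\}$, because $Vp_{(\alpha|\beta)}=F_\lambda$ for the partition $\lambda$ with Frobenius coordinate $(\alpha|\beta)$. As unitary conjugation preserves domains, graphs, and hence closures, I would conclude $\overline{\mathcal{A}}=V\,\overline{\mathcal{A}_0}\,V^*=V\,d\Gamma_K(D)\,V^*$, which is, under the convention fixed before the corollary, again denoted $d\Gamma_K(D)$ on $L^2(\mathfrak{X}^{(N)},M_{w,N})$.

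The argument is routine once the identifications are set up, so I do not expect a genuine obstacle; the only care needed is bookkeeping. Specifically, one must confirm that the index relabeling correctly sends $\mathcal{I}_0$ to the annihilation slots and $\mathcal{I}\backslash\mathcal{I}_0$ to the creation slots, and that the subspace on which $\mathcal{A}$ is declared to be \emph{defined} is honestly the transported core of Lemma \ref{lem:core} rather than merely some dense subspace—this is exactly what licenses identifying the closure with $d\Gamma_K(D)$ instead of a proper extension. Both points follow immediately from Lemmas \ref{lem:core} and \ref{lem:first_isomorphism}.
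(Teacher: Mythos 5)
Your proposal is correct and follows exactly the route the paper intends: Corollary \ref{cor:orth_poly_cre_ann} is stated there as an immediate consequence of Theorem \ref{thm:hamiltonian} with $\mathcal{H}=\ell^2(\mathfrak{X})$, $A=D$, $(v_\alpha)=(p_n)$, $\mathcal{I}_0=\{0,\dots,N-1\}$, followed by conjugation with the unitary $V$. Your extra care in checking that the span of the $p_{(\alpha|\beta)}$ coincides (via anticommutation and the index shift $n\mapsto\alpha+N$, $n\mapsto N-1-\beta$) with the core of Lemma \ref{lem:core}, and that unitary conjugation preserves closures, is exactly the bookkeeping the paper leaves implicit.
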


\subsection{The hypergeometric difference operator}\label{sec:hgdo}
Here we investigate a dynamics generated by the hyepergeometic difference operator $D_{z, z', \xi}$ in Example \ref{ex:z-measure}. Before that, we discuss its relationship to representation theory and determinantal point processes. Let $\mathbb{Y}$ be the set of all partitions. As explained previously, partitions are identified with Young diagrams. We take parameters $z, z'$, and $\xi$ as the same as in Example \ref{ex:z-measure}. We define a probability measure $M_{z, z', \xi}$, called \emph{$z$-measure}, on $\mathbb{Y}$ by
\[M_{z, z', \xi}(\lambda):=(1-\xi)^{zz'}\xi^{|\lambda|}(z)_\lambda (z')_{\lambda}\left(\frac{\dim \lambda}{|\lambda|!}\right)^2\quad (\lambda\in\mathbb{Y}),\]
where $(z)_\lambda:=\prod_{i=1}^l\prod_{j=1}^{\lambda_i}(z+j-i)$ when $l$ is the length of $\lambda$. Moreover, $\dim \lambda$ is the number of the standard Young tableaux of the shape $\lambda$, and it coincides with the dimension of the irreducible representation of the symmetric group $S(|\lambda|)$ labeled by $\lambda$. Since a pair of $z, z'$ is principal or complementary, we have $zz'>0$ and $(z)_\lambda (z')_\lambda>0$. Thus, $M_{z, z', \xi}$ is strictly positive, that is, $M_{z, z', \xi}(\lambda)>0$ holds for every $\lambda\in \mathbb{Y}$.

Let $\mathbb{Z}':=\mathbb{Z}+1/2$ following a convention, and $\lambda\in\mathbb{Y}\mapsto \omega^\lambda\in\mathcal{C}(\mathbb{Z}')$ is defined by
\[\omega^\lambda_x=\begin{cases}1 & \text{if }x=\lambda_i-i+1/2 \text{ for }i=1, 2, \dots, \\0&\text{otherwise}.\end{cases}\]
The pushforward measure of $M_{z, z', \xi}$ under this map becomes a point process on $\mathbb{Z}'$, and this is known to be determinantal. Moreover, its correlation kernel $K_{z, z', \xi}$ is given by
\[K_{z, z', \xi}(x, y):=\sum_{a\in \mathbb{Z'}_+}\psi_a(x)\psi_a(y) \quad (x, y\in\mathbb{Z}'),\]
where $\mathbb{Z}'_+:=\mathbb{Z}_{\geq 0}+1/2$ and $\psi_a:=\psi_{a; z, z', \xi}$ ($a\in\mathbb{Z}'$). They form orthonormal basis for $\ell^2(\mathbb{Z}')$ and satisfy $D_{z, z', \xi}\psi_a=(1-\xi)a\psi_a$. Thus, the integral operator of $K_{z, z', \xi}$, denoted by $K$, is the spectral projection of $D_{z, z', \xi}$ corresponding to $\{(1-\xi)a\mid a\in\mathbb{Z}'_+\}$.

Since $K$ and $D_{z, z', \xi}$ commute, by Proposition \ref{prop:second_quantization}, we obtain a dynamics $(\Gamma_K(U_t))_{t\in\mathbb{R}}$ of unitary operators on $\mathcal{H}_K$, where $U_t:=e^{\mathrm{i}t D_{z, z', \xi}}$. Let $d\Gamma_K(D_{z, z', \xi})$ denote its generator. We denote by $(\rho_K, \mathcal{H}_K, \Omega_K)$ the GNS-triple associated with $\varphi_K$ on $\mathfrak{I}(\mathbb{Z}')$

\begin{proposition}\label{prop:ocha}
    Let $\psi_{\alpha_1, \dots, \alpha_n; \beta_1, \dots, \beta_n}:=\rho_K(a^*(\psi_{-\alpha_1})\cdots a^*(\psi_{-\alpha_n})a(\psi_{\beta_1})\cdots a(\psi_{\beta_n}))\Omega_K$ for any $\alpha_1>\cdots>\alpha_n\geq 1/2$ and $\beta_1>\dots >\beta_n\geq 1/2$. Then the linear span of them is a core of $d\Gamma(D_{z, z', \xi})$. If $\lambda$ is a partition with \emph{modified} Frobenius coordinate is $(\alpha_1, \dots, \alpha_n|\beta_1, \dots, \beta_n)$, then we have
    \[\Gamma_K(U_t)\psi_{\alpha_1, \dots, \alpha_n; \beta_1, \dots, \beta_n}=e^{-\mathrm{i}t(1-\xi)|\lambda|}\psi_{\alpha_1, \dots, \alpha_n; \beta_1, \dots, \beta_n},\]
    \[d\Gamma_K(D_{z, z', \xi})\psi_{\alpha_1, \dots, \alpha_n; \beta_1, \dots, \beta_n}=-(1-\xi)|\lambda|\psi_{\alpha_1, \dots, \alpha_n; \beta_1, \dots, \beta_n}.\]
\end{proposition}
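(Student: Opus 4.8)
The plan is to recognize this proposition as a direct specialization of the general formalism of Section \ref{sec:unitary_dynamics}. First I would set $\mathcal{H}=\ell^2(\mathbb{Z}')$ and $A=D_{z, z', \xi}$, and take as orthonormal eigenbasis the system $(\psi_a)_{a\in\mathbb{Z}'}$ of Example \ref{ex:z-measure}, so that the eigenvalues are $m_a=(1-\xi)a$. Thus the index set is $\mathcal{I}=\mathbb{Z}'$, and since $K$ is the spectral projection onto $\overline{\mathrm{span}}\{\psi_a\mid a\in\mathbb{Z}'_+\}$, the distinguished subset is $\mathcal{I}_0=\mathbb{Z}'_+=\{1/2, 3/2, \dots\}$, whence $\mathcal{I}\backslash\mathcal{I}_0=\{-1/2, -3/2, \dots\}$. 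The key point to check is purely a matter of indices: in the vectors $\psi_{\alpha_1, \dots, \alpha_n; \beta_1, \dots, \beta_n}$ the creation operators carry the labels $-\alpha_j$, which lie in $\mathcal{I}\backslash\mathcal{I}_0$ precisely because $\alpha_j\geq1/2$, while the annihilation operators carry the labels $\beta_j\in\mathcal{I}_0$. Hence these vectors are exactly the vectors $v_{-\alpha_1, \dots, -\alpha_n; \beta_1, \dots, \beta_n}$ of Lemma \ref{lem:core}, and that lemma immediately gives the first assertion, namely that their linear span is a core for $d\Gamma_K(D_{z, z', \xi})$. (The required hypotheses, self-adjointness of $D_{z, z', \xi}$ and its commutation with $K$, are available from Example \ref{ex:z-measure} and from $K$ being a spectral projection of $D_{z, z', \xi}$.)

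Next I would read off the eigenvalues. By Equations \eqref{eq:action_U_t} and \eqref{eq:eigenvector}, applied under the above identification of indices, each $\psi_{\alpha_1, \dots, \alpha_n; \beta_1, \dots, \beta_n}$ is an eigenvector with
\[d\Gamma_K(D_{z, z', \xi})\psi_{\alpha_1, \dots, \alpha_n; \beta_1, \dots, \beta_n}=\left(\sum_{j=1}^n(m_{-\alpha_j}-m_{\beta_j})\right)\psi_{\alpha_1, \dots, \alpha_n; \beta_1, \dots, \beta_n},\]
while $\Gamma_K(U_t)$ acts on the same vector by the scalar $\exp\left(\mathrm{i}t\sum_{j=1}^n(m_{-\alpha_j}-m_{\beta_j})\right)$. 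Substituting $m_a=(1-\xi)a$ collapses the eigenvalue to
\[\sum_{j=1}^n(m_{-\alpha_j}-m_{\beta_j})=-(1-\xi)\sum_{j=1}^n(\alpha_j+\beta_j).\]

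Finally I would invoke the elementary identity relating modified Frobenius coordinates to the size of a partition. Since $(\alpha_1, \dots, \alpha_n\mid \beta_1, \dots, \beta_n)$ is by hypothesis the \emph{modified} Frobenius coordinate of $\lambda$, the same fact used at the outset of the proof of Lemma \ref{lem:eigenvalue} gives $\sum_{j=1}^n(\alpha_j+\beta_j)=|\lambda|$. Combined with the previous display, this yields the eigenvalue $-(1-\xi)|\lambda|$ for $d\Gamma_K(D_{z, z', \xi})$ and the phase $e^{-\mathrm{i}t(1-\xi)|\lambda|}$ for $\Gamma_K(U_t)$, exactly as stated. I do not expect any genuine analytic obstacle here: the whole argument is the bookkeeping of the Maya-diagram (particle--hole) correspondence that assigns the occupied positive modes $\psi_{-\alpha_j}$ to the arms $\alpha_j$ and the empty modes $\psi_{\beta_j}$ to the legs $\beta_j$, together with the Frobenius identity already recorded in Lemma \ref{lem:eigenvalue}; the substantive inputs (essential self-adjointness, the commutation $KD_{z, z', \xi}=D_{z, z', \xi}K$, and the core property) have all been secured earlier.
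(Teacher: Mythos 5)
Your proposal is correct and follows essentially the same route as the paper's own proof: the core property comes from Lemma \ref{lem:core} applied with $\mathcal{I}=\mathbb{Z}'$, $\mathcal{I}_0=\mathbb{Z}'_+$, and the eigenvalue formulas come from Equations \eqref{eq:action_U_t}, \eqref{eq:eigenvector} combined with the identity $|\lambda|=\sum_{j=1}^n(\alpha_j+\beta_j)$ for modified Frobenius coordinates. You merely make the index bookkeeping (that the creation labels $-\alpha_j$ lie outside $\mathcal{I}_0$ while the annihilation labels $\beta_j$ lie inside) more explicit than the paper does, which is harmless.
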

\begin{proof}
    By Lemma \ref{lem:core}, the linear span of the $\psi_{\alpha_1, \dots, \alpha_n; \beta_1, \dots, \beta_n}$ is a core of $d\Gamma_K(D_{z, z', \xi})$. If $\lambda$ is a Young diagram with modified Frobenius coordinate is $(\alpha_1, \dots, \alpha_n|\beta_1, \dots, \beta_n)$, then we have $|\lambda|=\sum_{j=1}^n(\alpha_j+\beta_j)$ and hence, by Equation \eqref{eq:action_U_t}, \eqref{eq:eigenvector}, we obtain two formulas in the statement.
\end{proof}

Let $(f_\lambda)_{\lambda\in\mathbb{Y}}$ be an arbitrary orthonormal basis for $\ell^2(\mathbb{Y})$, labeled by partitions. We define an isomorphism $\mathcal{H}_K\cong \ell^2(\mathbb{Y})$ assigning $\psi_{\alpha_1, \dots, \alpha_n; \beta_1, \dots, \beta_n}$ to $f_\lambda$, where $(\alpha_1, \dots, \alpha_n | \beta_1, \dots, \beta_n)$ is a modified Frobenius coordinate of $\lambda$. We also consider the isomorphism $\ell^2(\mathbb{Y})\cong L^2(\mathbb{Y}, M_{z, z', \xi})$ defined as multiplication by $M_{z, z', \xi}^{-1/2}$. Combining them, we have $\mathcal{H}_K\cong L^2(\mathbb{Y}, M_{z, z', \xi})$ and operators $\Gamma_K(U_t)$ and $d\Gamma_K(D_{z, z', \xi})$ give operators on $L^2(\mathbb{Y}, M_{z, z', \xi})$, denoted by the same symbols, respectively. For any $\lambda\in \mathbb{Y}$ let $F_\lambda:=f_\lambda M_{z, z', \xi}^{-1}$ and we have
\[\Gamma_K(U_t)F_\lambda=e^{-\mathrm{i}t(1-\xi)|\lambda|}F_\lambda, \quad d\Gamma_K(D_{z, z', \xi})F_\lambda=-(1-\xi)|\lambda|F_\lambda.\]
We remark that, by Lemma \ref{lem:core}, the linear span of the functions $F_\lambda$ is a core for $d\Gamma_K(D_{z, z', \xi})$. Moreover, as a consequence of Theorem \ref{thm:hamiltonian}, we can describe $d\Gamma_K(D_{z, z', \xi})$ in terms of creation and annihilation operators as follows:
\begin{corollary}\label{cor:hyp_cre_ann}
    Let
    \[\mathcal{A}_{z, z', \xi}:=\sum_{a=-1/2}^{-\infty} (1-\xi)a\pi_K(a^*(\psi_a)a(\psi_a))-\sum_{n=1/2}^\infty (1-\xi)a\pi_K(a(\psi_a)a^*(\psi_a))\]
    defined on the linear span of the $\psi_{\alpha_1,\dots, \alpha_n;\beta_1, \dots, \beta_n}$ in Proposition \ref{prop:ocha}. Then its closure coincides with $d\Gamma_K(D_{z, z', \xi})$.
\end{corollary}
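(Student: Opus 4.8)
The plan is to read off the statement as the specialization of Theorem \ref{thm:hamiltonian} to the hypergeometric data, so that the work is almost entirely bookkeeping. First I would match the abstract ingredients of that theorem to the present situation: take $\mathcal{H}=\ell^2(\mathbb{Z}')$, the self-adjoint operator $A=D_{z,z',\xi}$, and the orthonormal eigenbasis $(v_a)_{a\in\mathcal{I}}=(\psi_a)_{a\in\mathbb{Z}'}$ with $D_{z,z',\xi}\psi_a=(1-\xi)a\,\psi_a$, so that $\mathcal{I}=\mathbb{Z}'$ and the eigenvalue attached to $\psi_a$ is $m_a=(1-\xi)a$, exactly as recalled in Example \ref{ex:z-measure}. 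The one genuine identification to make is that of $\mathcal{I}_0$. Since $\xi\in(0,1)$ gives $1-\xi>0$, the eigenvalue $(1-\xi)a$ is positive precisely when $a\in\mathbb{Z}'_+$; hence the projection $K$ onto the range of the correlation kernel $K_{z,z',\xi}$ is the spectral projection of $D_{z,z',\xi}$ onto its positive-eigenvalue part, i.e.\ the orthogonal projection onto $\overline{\mathrm{span}}\{\psi_a\mid a\in\mathbb{Z}'_+\}$. Therefore $\mathcal{I}_0=\mathbb{Z}'_+$ and $\mathcal{I}\backslash\mathcal{I}_0=\mathbb{Z}'\backslash\mathbb{Z}'_+=\{-1/2,-3/2,\dots\}$.

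With these identifications the operator $\mathcal{A}$ of Theorem \ref{thm:hamiltonian} is
\[\sum_{a\in\mathbb{Z}'\backslash\mathbb{Z}'_+}(1-\xi)a\,\rho_K(a^*(\psi_a)a(\psi_a))-\sum_{a\in\mathbb{Z}'_+}(1-\xi)a\,\rho_K(a(\psi_a)a^*(\psi_a)),\]
which is precisely $\mathcal{A}_{z,z',\xi}$ once $\rho_K$ is replaced by $\pi_K$. By Theorem \ref{thm:hamiltonian}, the closure of this operator coincides with $d\Gamma_K(D_{z,z',\xi})$, and the initial domain — the linear span of the vectors $\psi_{\alpha_1,\dots,\alpha_n;\beta_1,\dots,\beta_n}$ — is a core by Lemma \ref{lem:core}, as already noted in Proposition \ref{prop:ocha}.

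It then remains only to transport this identity across the unitary identification $\mathcal{H}_K\cong L^2(\mathbb{Y},M_{z,z',\xi})$. Writing $V$ for that unitary, so that $\pi_K=\mathrm{Ad}V\circ\rho_K$, conjugation by $V$ sends $d\Gamma_K(D_{z,z',\xi})$ on $\mathcal{H}_K$ to the operator of the same name on $L^2(\mathbb{Y},M_{z,z',\xi})$ and sends each $\rho_K(\cdot)$ to $\pi_K(\cdot)$. Because unitary conjugation preserves closability and commutes with the operation of taking closures, the equality $\overline{\mathcal{A}}=d\Gamma_K(D_{z,z',\xi})$ passes unchanged to the operators in the statement.

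I do not expect a substantive obstacle: all the analytic content (the core property and the creation/annihilation rewriting) is already packaged in Theorem \ref{thm:hamiltonian} and Proposition \ref{prop:ocha}. The single place where care is needed is the sign analysis fixing $\mathcal{I}_0=\mathbb{Z}'_+$: it is the positivity of $1-\xi$ that makes $K$ project onto the $a>0$ part of the spectrum, and a sign slip there would interchange the two sums and thereby swap the roles of $a^*(\psi_a)a(\psi_a)$ and $a(\psi_a)a^*(\psi_a)$.
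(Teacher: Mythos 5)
Your proposal is correct and matches the paper's (implicit) argument exactly: the paper presents this corollary as an immediate specialization of Theorem \ref{thm:hamiltonian} with $\mathcal{I}=\mathbb{Z}'$, $\mathcal{I}_0=\mathbb{Z}'_+$, $v_a=\psi_a$, $m_a=(1-\xi)a$, and the core from Proposition \ref{prop:ocha}, transported through the unitary identification with $L^2(\mathbb{Y},M_{z,z',\xi})$. The only minor remark is that $\mathcal{I}_0=\mathbb{Z}'_+$ is forced directly by the definition of the correlation kernel $K_{z,z',\xi}(x,y)=\sum_{a\in\mathbb{Z}'_+}\psi_a(x)\psi_a(y)$ rather than by a sign analysis of the eigenvalues, but your conclusion is the same.
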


\section{Stochastic dynamics}\label{sec:stochastic_dynamics}
\subsection{General formalism}
In this section, we study stochastic dynamics on determinantal point processes. Similar to the previous section, we start with a basic observation of operator algebras.

Let $T\colon \mathcal{H}\to \mathcal{K}$ be an isometry of two Hilbert spaces $\mathcal{H}$ and $\mathcal{K}$. By the universality of CAR algebras, we obtain a $*$-homomorphism $a(T)\colon \mathfrak{A}(\mathcal{H})\to \mathfrak{A}(\mathcal{K})$ such that $a(T)(a(h))=a(Th)$ for any $h\in \mathcal{H}$. In particular, a scalar unitary operators $\lambda1$ ($\lambda\in\mathbb{T}$) induce the gauge action $\gamma\colon\mathbb{T}\curvearrowright \mathfrak{A}(\mathcal{H})$. Evans \cite{Evans79} had extended the construction of $a(T)$ to a construction of completely positive maps from contraction operators. We summarize this here. See \cite{Evans79,EK:book} for more details.

Let us fix a positive contraction operator $K$ on $\mathcal{H}$, and $\varphi_K$ denotes the corresponding quasi-free state on $\mathfrak{A}(\mathcal{H})$. We denote by $(\rho_K, \mathcal{F}_K(\mathcal{H}), \Omega_K)$ the GNS-triple associated with $\varphi_K$. Moreover, $\mathcal{H}_K\subset \mathcal{F}_K(\mathcal{H})$ denotes the closed linear span of $\rho_K(\mathfrak{I}(\mathcal{H}))\Omega_K$, where $\mathfrak{I}(\mathcal{H})$ is the GICAR algebra, i.e., $(\rho_K, \mathcal{H}_K, \Omega_K)$ is the GNS-triple associated with $\varphi_K$ on $\mathfrak{I}(\mathcal{H})$.

Let $T$ be a contraction operator on $\mathcal{H}$. We consider the isometry $W\colon \mathcal{H}\to \mathcal{H}\oplus\mathcal{H}$ defined by $Wh:=Th\oplus Sh$, where $S:=(1-T^*T)^{1/2}$. As mentioned above, we obtain the $*$-homomorphism $a(W)\colon \mathfrak{A}(\mathcal{H})\to \mathfrak{A}(\mathcal{H}\oplus \mathcal{H})$ such that $a(W)a(h)=a(Wh)$ for all $h\in\mathcal{H}$. Let us recall that the gauge action $\gamma\colon\mathbb{T}\curvearrowright \mathfrak{A}(\mathcal{H})$ preserves $\varphi_K$, that is, $\varphi_K\circ \gamma_\lambda=\varphi_K$ for any $\lambda\in\mathbb{T}$. Thus, there exists a unique unitary representation $(\Gamma, \mathcal{F}_K(\mathcal{H}))$ of $\mathbb{T}$ such that
\[\Gamma_\lambda\rho_K(x)=\rho_K(\gamma_\lambda(x))\Gamma_\lambda, \quad \Gamma_\lambda\Omega_K=\Omega_K \quad (\lambda\in\mathbb{T}, x\in\mathfrak{A}(\mathcal{H})).\]
We consider the $*$-homomorphism $\Phi_K\colon \mathfrak{A}(\mathcal{H}\oplus\mathcal{H})\to \mathfrak{A}(\mathcal{H})\otimes B(\mathcal{F}_K(\mathcal{H}))$ defined by
\[\Phi_K(a(h\oplus k)):=a(h)\otimes \Gamma_{-1}+1\otimes \rho_K(a(k)).\]
Then, we define a linear map $a_K(T)\colon \mathfrak{A}(\mathcal{H})\to \mathfrak{A}(\mathcal{H})$ by $a_K(T):=(\mathrm{id}\otimes \widetilde\varphi_K)\circ \Phi_K\circ a(W)$, where $\widetilde\varphi_K$ denotes the vector state on $B(\mathcal{F}_K(\mathcal{H}))$ corresponding to $\Omega_K$. By definition, $a_K(T)$ is contractive and completely positive. Moreover, $\gamma_\lambda\circ a_K(T)=a_K(T)\circ \gamma_\lambda$ holds for every $\lambda\in\mathbb{T}$, and hence $a_K(T)$ preserves the GICAR algebra $\mathfrak{I}(\mathcal{H})$. Namely, we have $a_K(T)(\mathfrak{I}(\mathcal{H}))\subseteq \mathfrak{I}(\mathcal{H})$.

By the definition of $a_K(T)$, for any $h_1, \dots, h_n\in\mathcal{H}$ we have
\begin{align*}
     & a_K(T)(a^{\#_1}(h_1)\cdots a^{\#_n}(h_n))                                                                                                                       \\
     & =\sum_{I\subseteq [n]}\epsilon_I \varphi_K(a^{\#_{i_1}}(Sh_{i_1})\cdots a^{\#_{i_p}}(Sh_{i_p}))a^{\#_{i'_1}}(Th_{i'_1})\cdots a^{\#_{i'_{n-p}}}(Th_{i'_{n-p}}),
\end{align*}
where $I=\{i_1<\cdots <i_p\}$, $[n]\backslash I=\{i'_1<\dots<i'_{n-p}\}$, and $\epsilon_I$ is the sign of the permutation $(1, \dots, n)\to (i_1, \dots, i_p, i'_1, \dots, i'_{n-p})$. Here $a^{\#_i}$ denotes either $a$ or $a^*$. Moreover, if $TK=KT$, then, by \cite[Theorem 3.1, Proposition A.1]{Evans79}, we have
\begin{equation}\label{eq:qf_map}
    a_K(T)(:a^{\#_1}(h_1)\cdots a^{\#_n}(h_n):_K)=:a^{\#_1}(Th_1)\cdots a^{\#_n}(Th_n):_K.
\end{equation}

The following result might be known, but we show this for the reader's convenience.
\begin{lemma}
    For any $h_1, \dots, h_m, k_1, \dots, k_n\in\mathcal{H}$ we have
    \begin{align}\label{eq:qf_map_calc}
         & a_K(T)(a^*(h_m)\cdots a^*(h_1)a(k_1)\cdots a(k_n))                                                                                                        \nonumber \\
         & =\sum_{I, J}\epsilon_I \epsilon_J\det[\langle KSh_{i_a}, Sk_{j_b}\rangle]_{a, b=1}^p a^*(Th_{i'_{m-p}})\cdots a^*(Th_{i'_1})a(Tk_{j'_1})\cdots  a(Tk_{j'_{n-p}}),
    \end{align}
    where the summation runs over all $I=\{i_1<\cdots <i_p\}\subseteq [m]$ and $J=\{j_1<\cdots <j_p\}\subseteq [n]$ such that $|I|=|J|(=p)$, and we set $[m]\backslash I=\{i'_1<\cdots <i'_{m-p}\}$ and $[n]\backslash J=\{j'_1<\cdots <j'_{n-p}\}$. Furthermore, $\varphi_K\circ a_K(T)=\varphi_K$ holds if $TK=KT$.
\end{lemma}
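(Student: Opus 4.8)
The plan is to obtain the displayed identity \eqref{eq:qf_map_calc} by specializing the general expansion of $a_K(T)$ recorded just above the lemma to the monomial $x:=a^*(h_m)\cdots a^*(h_1)a(k_1)\cdots a(k_n)$, and then to read off $\varphi_K\circ a_K(T)=\varphi_K$ as a consequence. Writing $x$ as a product of $m+n$ elementary factors, where the first $m$ are creation operators carrying the arguments $h_m,\dots,h_1$ and the last $n$ are annihilation operators carrying $k_1,\dots,k_n$, the general formula expands $a_K(T)(x)$ as a sum over subsets $\widehat I\subseteq[m+n]$ of the unshuffle sign $\epsilon_{\widehat I}$ times $\varphi_K$ evaluated on the selected factors (with their $S$-images) times the operator monomial built from the unselected factors (with their $T$-images). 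Since $\varphi_K$ is quasi-free and gauge invariant, the $\varphi_K$-factor vanishes unless $\widehat I$ contains equally many creation and annihilation positions; hence the surviving $\widehat I$ are exactly those specified by a choice of $p$ selected creation positions and $p$ selected annihilation positions, which I encode by $I=\{i_1<\cdots<i_p\}\subseteq[m]$ (selected $h$-indices) and $J=\{j_1<\cdots<j_p\}\subseteq[n]$ (selected $k$-indices).

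For such a term I will record the two factors separately. Listing the selected positions in increasing order puts all selected creation operators (in the order $a^*(Sh_{i_p}),\dots,a^*(Sh_{i_1})$, because the $h$'s occur in reversed order in $x$) before all selected annihilation operators $a(Sk_{j_1}),\dots,a(Sk_{j_p})$, so the quasi-free formula of Section~\ref{subsec:GICAR} evaluates the $\varphi_K$-factor as $\det[\langle KSh_{i_a},Sk_{j_b}\rangle]_{a,b=1}^p$. The same ordering analysis on the complement produces the residual monomial $a^*(Th_{i'_{m-p}})\cdots a^*(Th_{i'_1})a(Tk_{j'_1})\cdots a(Tk_{j'_{n-p}})$ displayed in \eqref{eq:qf_map_calc}.

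The delicate point, which I expect to be the main obstacle, is the sign: one must show that the single unshuffle sign $\epsilon_{\widehat I}$ on $[m+n]$ factorizes as $\epsilon_I\epsilon_J$. Writing $\epsilon_{\widehat I}=(-1)^{\mathrm{inv}(\widehat I)}$ with $\mathrm{inv}(\widehat I)=\sum_\ell(\widehat a_\ell-\ell)$ over the sorted elements $\widehat a_\ell$ of $\widehat I$, I substitute $\widehat I=\{m+1-i:i\in I\}\cup\{m+j:j\in J\}$ (the creation positions $m+1-i$ lie below all annihilation positions $m+j$, so the sorted list is the reversed creation block followed by the shifted annihilation block). Expanding, the contributions $2pm$, $2p^2$ and $p(p+1)$ coming from the reversal offset and the shift are all even, leaving $\mathrm{inv}(\widehat I)\equiv\sum_l i_l+\sum_l j_l\equiv\mathrm{inv}(I)+\mathrm{inv}(J)\pmod 2$, i.e. $\epsilon_{\widehat I}=\epsilon_I\epsilon_J$. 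This establishes \eqref{eq:qf_map_calc}.

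Finally, for $\varphi_K\circ a_K(T)=\varphi_K$ it suffices to test on $x$. Applying $\varphi_K$ to \eqref{eq:qf_map_calc} and using quasi-freeness on the residual monomial forces $m-p=n-p$, so everything is trivially $0=0$ when $m\neq n$ (both sides are gauge non-invariant), while for $m=n$ it reduces to
\[\varphi_K(a_K(T)(x))=\sum_{I,J}\epsilon_I\epsilon_J\det(A[I,J])\,\det(B[I^c,J^c]),\]
where $A[I,J]$, $B[I^c,J^c]$ denote submatrices on the indicated rows/columns, $A_{ij}:=\langle KSh_i,Sk_j\rangle$, $B_{ij}:=\langle KTh_i,Tk_j\rangle$, and $I^c:=[m]\backslash I$, $J^c:=[m]\backslash J$. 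The hypothesis $TK=KT$ yields $T^*K=KT^*$ and hence $KS=SK$, whence $A_{ij}+B_{ij}=\langle K(1-T^*T)h_i,k_j\rangle+\langle KT^*Th_i,k_j\rangle=\langle Kh_i,k_j\rangle$. The right-hand side is therefore the generalized Laplace (determinant-of-a-sum) expansion of $\det(A+B)=\det[\langle Kh_i,k_j\rangle]_{i,j=1}^m=\varphi_K(x)$; I will justify this expansion by splitting each column of $A+B$ via multilinearity and applying the ordinary Laplace expansion to each resulting mixed matrix, the sign match following from $\epsilon_I\epsilon_J=(-1)^{\sigma(I)+\sigma(J)}$ with $\sigma(I)=\sum_{i\in I}i$. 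This completes the argument.
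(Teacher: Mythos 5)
Your proposal is correct and follows essentially the same route as the paper: expand $a_K(T)$ of the monomial as a sum over balanced pairs of subsets $I\subseteq[m]$, $J\subseteq[n]$ with the quasi-free determinant $\det[\langle KSh_{i_a},Sk_{j_b}\rangle]$ as coefficient, and then deduce $\varphi_K\circ a_K(T)=\varphi_K$ from the determinant-of-a-sum expansion together with $T^*KT+S^*KS=K$. The only (immaterial) differences are that you obtain \eqref{eq:qf_map_calc} by specializing the general expansion displayed before the lemma and checking the sign factorization $\epsilon_{\widehat I}=\epsilon_I\epsilon_J$ by hand, where the paper instead recomputes via $\Phi_K(a(Wh_1)\cdots)^*\Phi_K(a(Wk_1)\cdots)$, and that you justify the determinant-of-a-sum identity by multilinearity where the paper cites Marcus.
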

\begin{proof}
    We first show Equation \eqref{eq:qf_map_calc}. By the definition of $\Phi_K$, we have
    \[\Phi_K(a(Wk_1)\cdots a(Wk_n))=\sum_{J\subseteq [n]}\epsilon_Ja(Tk_{j'_1})\cdots a(Tk_{j'_{n-|J|}})\otimes \pi_K(a(Sk_{j_1})\cdots a(Sk_{j_{|J|}}))\Gamma_{-1}^{n-|J|},\]
    and hence
    \begin{align*}
         & \Phi_K(a^*(Wh_m)\cdots a^*(Wh_1)a(Wk_1)\cdots a(Wk_n))                                                                                            \\
         & =\Phi_K(a(Wh_1)\cdots a(Wh_m))^*\Phi_K(a(Wk_1)\cdots a(Wk_n))                                                                                     \\
         & =\sum_{I\subseteq [m], J\subseteq[n]}\epsilon_I\epsilon_J [a(Th_{i'_1})\cdots a(Th_{i'_{m-|I|}})]^*[a(Tk_{j'_1})\cdots a(Tk_{j'_{n-|J|}})]\otimes \\
         & \quad \quad \quad  [\pi_K(a(Sh_{i_1})\cdots a(Sh_{i_{|I|}}))\Gamma_{-1}^{m-|I|}]^*[\pi_K(a(Sk_{j_1})\cdots a(Sk_{j_{|J|}}))\Gamma_{-1}^{n-|J|}].
    \end{align*}
    Thus, Equation \eqref{eq:qf_map_calc} holds true since
    \begin{align*}
         & \widetilde \varphi_K([\pi_K(a(Sh_{i_1})\cdots a(Sh_{i_{|I|}}))\Gamma_{-1}^{m-|I|}]^*[\pi_K(a(Sk_{j_1})\cdots a(Sk_{j_{|J|}}))\Gamma_{-1}^{n-|J|}]) \\
         & =\delta_{|I|, |J|}\det[\langle KSh_{i_a}, Sk_{j_b}]_{a, b=1}^{|I|}.
    \end{align*}
    Next, we show the second assertion. By Equation \eqref{eq:qf_map_calc}, we have
    \begin{align*}
         & \varphi_K(a_K(T)[a^*(h_m)\cdots a^*(h_1)a(k_1)\cdots a(k_n)])                                                                                                                     \\
         & =\delta_{m, n}\sum_{I, J\subseteq [n]; |I|=|J|(=p)}\epsilon_I\epsilon_J\det[\langle KSh_{i_a}, Sk_{j_b}\rangle]_{a, b=1}^p\det[\langle KTh_{n-i'_a}, Tj_{n-j'_b}]_{a, b=1}^{n-p}.
    \end{align*}
    Since the sign of the permutation $(1, \dots, m)\to (i_1, \dots, i_p, i'_1, \dots, i'_{m-p})$ (i.e., $\epsilon_I$) is equal to
    \[(-1)^{|\{(j, j')\in J\times J'\mid j'<j\}|}=(-1)^{\sum_{a=1}^p(n-j_a+p-a)}.\]
    Therefore, by \cite[Equation (1)]{Marcus}, we have
    \begin{align*}
         & \varphi_K(a_K(T)[a^*(h_m)\cdots a^*(h_1)a(k_1)\cdots a(k_n)])                                                                                                                                    \\
         & =\delta_{m, n}\sum_{I, J\subseteq [n]; |I|=|J|(=p)}(-1)^{\sum_{i\in I}i+\sum_{j\in J}j}\det[\langle KSh_{i_a}, Sk_{j_b}\rangle]_{a, b=1}^p\det[\langle KTh_{n-i'_a}, Tj_{n-j'_b}]_{a, b=1}^{n-p} \\
         & =\delta_{m, n}\det[\langle KTh_i, Th_j \rangle+\langle KSh_i, Sk_j \rangle]_{i, j=1}^n.
    \end{align*}
    Thus, if $TK=KT$, then $T^*KT+S^*KS=K$, and hence we have
    \[\varphi_K(a_K(T)[a^*(h_m)\cdots a^*(h_1)a(k_1)\cdots a(k_n)])=\varphi_K(a^*(h_m)\cdots a^*(h_1)a(k_1)\cdots a(k_n)).\]
    Namely, $\varphi_K\circ a_K(T)=\varphi_K$ holds true.
\end{proof}

Let $(T_t)_{t\geq0}$ be a strongly continuous contraction semigroup on $\mathcal{H}$ commuting with $K$. By Equation \eqref{eq:qf_map}, we have
\begin{align*}
    a_K(T_t) a_K(T_s)(:a^{\#_1}(h_1)\cdots a^{\#_n}(h_n):_K)
     & =:a^{\#_1}(T_tT_sh_1)\cdots a^{\#_n}(T_tT_sh_n):_K   \\
     & =a_K(T_{t+s})(:a^{\#_1}(h_1)\cdots a^{\#_n}(h_n):_K)
\end{align*}
for every $h_1, \dots, h_n\in\mathcal{H}$ and $t, s\geq0$. Thus, we have $a_K(T_t) a_K(T_s)=a_K(T_{t+s})$ since the linear span of the $:a^{\#_1}(h_1)\cdots a^{\#_n}(h_n):_K$ ($h_1, \dots, h_n\in\mathcal{H}$) is dense in $\mathfrak{A}(\mathcal{H})$. By Equation \eqref{eq:qf_map_calc}, for any $h_1, \dots, h_m, k_1, \dots, k_n\in\mathcal{H}$ we have
\[\lim_{t\searrow 0}a_K(T_t)(a^*(h_m)\cdots a^*(h_1)a(k_1)\cdots a(k_n))=a^*(h_m)\cdots a^*(h_1)a(k_1)\cdots a(k_n)\]
in the norm topology. Thus, $\lim_{t\searrow0}a_K(T_t)(x)=x$ holds for all $x\in \mathfrak{A}(\mathcal{H})$. Namely, $(a_K(T_t))_{t\geq 0}$ forms a strongly continuous semigroup of (contractive) completely positive maps on $\mathfrak{A}(\mathcal{H})$.

In what follows, we use the same symbols $a_K(T_t)$ ($t\geq0$) to denote the restriction $a_K(T_t)|_{\mathfrak{I}(\mathcal{H})}$ to the GICAR algebra $\mathfrak{I}(\mathcal{H})$. Let $A$ denote the infinitesimal generator of $(T_t)_{t\geq0}$ and $da_K(A)$ the infinitesimal generator of $(a_K(T_t))_{t\geq 0}$ on $\mathfrak{I}(\mathcal{H})$. If $h_1, \dots, h_n$ belong to the domain of $A$, then, by Equation \eqref{eq:qf_map}, we have
\begin{align}\label{eq:qf_generator}
     & da_K(A):a^{\#_1}(h_1)\cdots a^{\#_n}(h_n):_K\nonumber                                                              \\
     & =\sum_{i=1}^n:a^{\#_1}(h_1)\cdots a^{\#_{i-1}}(h_{i-1})a^{\#_i}(Ah_i)a^{\#_{i+1}}(h_{i+1})\cdots a^{\#_n}(h_n):_K,
\end{align}
where $a^{\#_i}$ denotes either $a$ or $a^*$.

Since the quasi-free state $\varphi_K$ is invariant under $(a_K(T_t))_{t\geq0}$, we obtain the following result:
\begin{proposition}\label{prop:constract_semigroup}
    Let $(T_t)_{t\geq0}$ be a strongly continuous contraction semigroup on $\mathcal{H}$ such that $T_tK=KT_t$ ($t\geq0$). The following holds true:
    \begin{enumerate}
        \item There exists a unique strongly continuous semigroup $(\Lambda_K(T_t))_{t\geq0}$ on $\mathcal{H}_K$ such that $\Lambda_K(T_t)\rho_K(x)\Omega_K=\rho_K(a_K(T_t)(x))\Omega_K$ for every $x\in\mathfrak{I}(\mathcal{H})$ and $t\geq 0$.
        \item Let $A$ and $d\Lambda_K(A)$ denote the infinitesimal generators of $(T_t)_{t\geq0}$ and $(\Lambda_K(T_t))_{t\geq0}$. If $\mathcal{V}$ is a dense subspace in $\mathcal{H}$ such that $\mathcal{V}\subseteq \mathrm{dom}(A)$ and $T_t\mathcal{V}\subseteq \mathcal{V}$ for any $t\geq 0$, then
              \[\widetilde{\mathcal{V}}:=\mathrm{span}\{\rho_K(:a^*(h_1)\cdots a^*(h_n)a(k_1)\cdots a(k_n):_K)\Omega_K\mid h_1, \dots, h_n, k_1, \dots, k_n\in\mathcal{V}\}\]
              is a core of $d\Lambda_K(A)$.
    \end{enumerate}
\end{proposition}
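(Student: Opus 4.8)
The plan is to exploit the invariance $\varphi_K\circ a_K(T_t)=\varphi_K$ (valid since $T_tK=KT_t$) to push the semigroup $(a_K(T_t))_{t\geq 0}$ down to the GNS space, and then to identify a core by the same mechanism as in the unitary case. For part (1), I would first observe that because $\varphi_K$ is $a_K(T_t)$-invariant, the assignment $\rho_K(x)\Omega_K\mapsto \rho_K(a_K(T_t)(x))\Omega_K$ is well defined and contractive on the dense subspace $\rho_K(\mathfrak{I}(\mathcal{H}))\Omega_K\subset\mathcal{H}_K$. Concretely, for $x\in\mathfrak{I}(\mathcal{H})$ one estimates $\|\rho_K(a_K(T_t)(x))\Omega_K\|^2=\varphi_K(a_K(T_t)(x)^*a_K(T_t)(x))$; since $a_K(T_t)$ is completely positive and contractive, the Kadison--Schwarz inequality gives $a_K(T_t)(x)^*a_K(T_t)(x)\leq a_K(T_t)(x^*x)$, and applying $\varphi_K$ together with its invariance yields $\leq \varphi_K(x^*x)=\|\rho_K(x)\Omega_K\|^2$. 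Hence the map extends to a contraction $\Lambda_K(T_t)$ on all of $\mathcal{H}_K$, and it is unique by density of the cyclic orbit. The semigroup law $\Lambda_K(T_t)\Lambda_K(T_s)=\Lambda_K(T_{t+s})$ and the identity at $t=0$ follow immediately from the corresponding properties of $(a_K(T_t))_{t\geq 0}$ established just above the statement.

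For strong continuity I would check $\Lambda_K(T_t)v\to v$ as $t\searrow 0$ on the dense set of vectors $v=\rho_K(x)\Omega_K$. For such $v$ one has $\|\Lambda_K(T_t)v-v\|^2=\|\rho_K(a_K(T_t)(x)-x)\Omega_K\|^2\leq\varphi_K\big((a_K(T_t)(x)-x)^*(a_K(T_t)(x)-x)\big)$, which tends to $0$ because $a_K(T_t)(x)\to x$ in norm (shown above via Equation \eqref{eq:qf_map_calc}) and $\varphi_K$ is norm-continuous. A uniform-boundedness argument (the operators $\Lambda_K(T_t)$ are contractions) then upgrades pointwise convergence on a dense set to strong continuity on all of $\mathcal{H}_K$.

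For part (2) the strategy mirrors the proof of Proposition \ref{prop:second_quantization}. I would first verify that each vector $\rho_K(:a^*(h_1)\cdots a^*(h_n)a(k_1)\cdots a(k_n):_K)\Omega_K$ with $h_i,k_i\in\mathcal{V}\subseteq\mathrm{dom}(A)$ lies in $\mathrm{dom}(d\Lambda_K(A))$: differentiating $\Lambda_K(T_t)$ applied to this vector and using the generator formula \eqref{eq:qf_generator}, the derivative at $t=0$ exists and equals the image under the Wick-ordered sum obtained by replacing each $h_i$ (resp.\ $k_i$) by $Ah_i$ (resp.\ $Ak_i$). Thus $\widetilde{\mathcal{V}}\subseteq\mathrm{dom}(d\Lambda_K(A))$. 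Since $\mathcal{V}$ is dense in $\mathcal{H}$, the Wick-ordered monomials $\rho_K(:a^*(h_1)\cdots a(k_n):_K)\Omega_K$ with $h_i,k_i\in\mathcal{V}$ span a dense subspace of $\mathcal{H}_K$ (their non-Wick-ordered counterparts already do, by Lemma \ref{lem:GNS_space}, and Wick ordering differs by lower-order terms of the same form), and the hypothesis $T_t\mathcal{V}\subseteq\mathcal{V}$ together with \eqref{eq:qf_map} guarantees $\Lambda_K(T_t)\widetilde{\mathcal{V}}\subseteq\widetilde{\mathcal{V}}$. A dense, invariant subspace of the domain is automatically a core by \cite[Theorem VIII.11]{RS:book1} (more precisely, its semigroup analogue), which concludes the argument.

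The main obstacle I anticipate is the well-definedness and contractivity in part (1): one must be careful that $\varphi_K$-invariance of $a_K(T_t)$ alone gives well-definedness, but contractivity on $\mathcal{H}_K$ genuinely needs the Kadison--Schwarz inequality for the completely positive contraction $a_K(T_t)$ rather than mere positivity. A secondary subtlety is confirming that the Wick-ordered vectors in $\widetilde{\mathcal{V}}$ still span a dense subspace and that the core criterion designed for strongly continuous \emph{groups} applies equally to this contraction \emph{semigroup}; both are standard but should be invoked with the semigroup version of the relevant results.
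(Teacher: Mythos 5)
Your proposal is correct and takes essentially the same route as the paper: part (1) is proved via Kadison's inequality for the completely positive contraction $a_K(T_t)$ together with the invariance $\varphi_K\circ a_K(T_t)=\varphi_K$, and part (2) via Equations \eqref{eq:qf_map} and \eqref{eq:qf_generator} plus the invariant-dense-subspace core criterion, for which the paper indeed cites the semigroup version (\cite[Theorem X.49]{RS:book2}) exactly as you anticipated. The additional details you supply on strong continuity and on the density of the Wick-ordered span are standard points the paper leaves implicit.
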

\begin{proof}
    By Kadison's inequality (see e.g., \cite[Proposition II.6.9.14]{Blackadar:book}), we have
    \[\|\rho_K(a_K(T_t)(x))\Omega_K\|^2=\varphi_K(a_K(T_t)(x)^*a_K(T_t)(x))\leq \varphi_K(a_K(T_t)(x^*x))=\|\rho_K(x)\Omega_K\|^2\]
    for every $x\in\mathfrak{I}(\mathcal{H})$ and $t\geq 0$. Thus, the mapping $\rho_K(x)\Omega_K\mapsto \rho_K(a_K(T_t)(x))\Omega_K$ is well defined and contractive on $\rho_K(\mathfrak{I}(\mathcal{H}))\Omega_K$. Since $\rho_K(\mathfrak{I}(\mathcal{H}))\Omega_K$ is dense in $\mathcal{H}_K$, the above map can be uniquely extended to a contraction operator $\Lambda_K(T_t)$ on $\mathcal{H}_K$. Moreover, $(\Lambda_K(T_t))_{t\geq 0}$ forms a strongly continuous semigroup of contraction operators on $\mathcal{H}_K$ since $(a_K(T_t))_{t\geq0}$ is a strongly continuous semigroup.

    By Equations \eqref{eq:qf_map}, \eqref{eq:qf_generator}, $\widetilde{\mathcal{V}}$ is invariant under $(\Lambda_K(T_t))_{t\geq 0}$ and contained in $\mathrm{dom}(d\Lambda_K(A))$. Thus, by \cite[Themrem X.49]{RS:book2}, $\widetilde{\mathcal{V}}$ is a core for $d\Lambda_K(A)$.
\end{proof}

\subsection{Discrete orthogonal polynomial ensembles of hypergeometric type}
We return to the setting of Section \ref{sec:hyper_dop}. Namely, let $\mathfrak{X}=\mathbb{Z}+a$ or $\mathbb{Z}_\geq0+a$ for some $a\in\mathbb{R}$, and a weight function $w\colon\mathfrak{X}\to\mathbb{R}_{>0}$ has finite moments of all orders $n\geq0$. Moreover, we assume that there exist two polynomials $\sigma(x)$ and $\tau(x)$ such that $\deg\sigma(x)\geq2$, $\deg\tau(x)\leq1$, and
\[\Delta[\sigma(x)w(x)]=\tau(x)w(x), \quad \sigma(x)>0\]
holds on $\mathfrak{X}$. Thus, we obtain monic orthogonal polynomials $(\tilde p_n(x))_{n\geq0}$ satisfying Equation \eqref{eq:hypergeometric} with $\lambda=-m_n$ for all $n\geq0$, where $m_n:=\tau'n+\sigma''n(n-1)/2$. We also obtain the orthonormal basis $(p_n)_{n\geq0}$ for $\ell^2(\mathfrak{X})$ by $p_n(x):=\tilde p_n(x)w(x)^{1/2}/\|\tilde p_n\|_{L^2(\mathfrak{X}, w)}$. Then, they are eigenfunctions of $D$ defined by Equation \eqref{eq:differene_operator}. More precisely, we have $Dp_n=m_np_n$ for any $n\geq0$. We also assume that eigenvalues satisfy $m_0=0>m_1>m_2>\cdots$.

Let us fix $N\geq 0$ and $K$ denotes the orthogonal projection onto the linear span of $p_0, \dots, p_{N-1}$. For $\mu\in (m_N, m_{N-1})$ we define $B_\mu:=(1-K)(D+\mu)-K(D+\mu)$. By definition, $BK=KB$ holds, and $B$ is self-adjoint and negative semidefinite. Hence, by \cite[Corollary in Section X.8]{RS:book2}, we obtain a semigroup $(e^{tB})_{t\geq 0}$ of contraction operators on $\ell^2(\mathfrak{X})$ commuting with $K$. Therefore, by Proposition \ref{prop:constract_semigroup}, we obtain the semigroup $(\Lambda_K(e^{tB}))_{t\geq 0}$ of contraction operators on $\mathcal{H}_K$. Let $d\Lambda_K(B)$ denote its infinitesimal generator.

We recall that for any $\alpha_1> \cdots >\alpha_n\geq0$ and $N-1\geq \beta_1>\cdots >\beta_n\geq0$ we have
\begin{align*}
    p_{(\alpha|\beta)}
     & =\rho_K(a^*(p_{\alpha_1+N})\cdots a^*(p_{\alpha_n+N})a(p_{N-1-\beta_1})\cdots a(p_{N-1-\beta_n}))\Omega_K            \\
     & =(-1)^n\rho_K(:a^*(p_{\alpha_1+N})\cdots a^*(p_{\alpha_n+N})a(p_{N-1-\beta_1})\cdots a(p_{N-1-\beta_n}):_K)\Omega_K.
\end{align*}

Thus, by Proposition \ref{prop:constract_semigroup}(2), the linear span of the $p_{(\alpha|\beta)}$ is a core for $d\Lambda_K(B)$. Moreover, we have the following statement:
\begin{proposition}\label{prop:markov_semigroup}
    For any $t\geq 0$ and $p_{(\alpha|\beta)}$ we have $\Lambda_K(e^{tB})p_{(\alpha|\beta)}=e^{t(m_\lambda-m_\emptyset)}p_{(\alpha|\beta)}$, where $\lambda$ is a partition with Frobenius coordinate $(\alpha|\beta)$. In particular, $d\Lambda_K(B)p_{(\alpha|\beta)}=(m_\lambda-m_\emptyset)p_{(\alpha|\beta)}$.
\end{proposition}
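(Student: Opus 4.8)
The plan is to use the fact that $B$ is diagonalized by the very same orthonormal basis $(p_n)_{n\geq0}$ that diagonalizes $D$ and $K$, and then to push this diagonalization through the intertwining relation \eqref{eq:qf_map} for the quasi-free completely positive maps. Since the proposition asserts both a semigroup identity and (``in particular'') a generator identity, I would establish the semigroup statement first and obtain the generator statement by differentiation at $t=0$.

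First I would record the action of $B$ on the basis. Writing $B=(1-K)(D+\mu)-K(D+\mu)=(1-2K)(D+\mu)$ and using $Dp_n=m_np_n$ together with the fact that $K$ is the projection onto $\mathrm{span}\{p_0,\dots,p_{N-1}\}$, the operators $D$, $K$, hence $B$, are simultaneously diagonal and one reads off
\[
Bp_n=\begin{cases}(m_n+\mu)\,p_n & n\geq N,\\ -(m_n+\mu)\,p_n & 0\leq n\leq N-1.\end{cases}
\]
In particular each $p_n$ is an eigenvector of $e^{tB}$, with $e^{tB}p_n=e^{t(m_n+\mu)}p_n$ for $n\geq N$ and $e^{tB}p_n=e^{-t(m_n+\mu)}p_n$ for $n<N$.

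Next I would apply \eqref{eq:qf_map} with $T=e^{tB}$, which is legitimate since $e^{tB}$ is a contraction commuting with $K$ (immediate from $[B,K]=0$). Using the Wick-ordered expression $p_{(\alpha|\beta)}=(-1)^n\rho_K(:a^*(p_{\alpha_1+N})\cdots a^*(p_{\alpha_n+N})a(p_{N-1-\beta_1})\cdots a(p_{N-1-\beta_n}):_K)\Omega_K$ recorded just before the statement, together with the defining intertwiner $\Lambda_K(e^{tB})\rho_K(x)\Omega_K=\rho_K(a_K(e^{tB})(x))\Omega_K$ of Proposition \ref{prop:constract_semigroup}(1), the map $a_K(e^{tB})$ replaces each $p_{\alpha_i+N}$ by $e^{t(m_{\alpha_i+N}+\mu)}p_{\alpha_i+N}$ and each $p_{N-1-\beta_i}$ by $e^{-t(m_{N-1-\beta_i}+\mu)}p_{N-1-\beta_i}$ inside the same Wick monomial. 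Pulling the scalars out by multilinearity and cancelling the two factors $(-1)^n$ yields
\[
\Lambda_K(e^{tB})p_{(\alpha|\beta)}
=\exp\!\left(t\sum_{i=1}^{n}\bigl[(m_{\alpha_i+N}+\mu)-(m_{N-1-\beta_i}+\mu)\bigr]\right)p_{(\alpha|\beta)}.
\]
The $\mu$-terms cancel pairwise, leaving the exponent $t\sum_{i=1}^n(m_{\alpha_i+N}-m_{N-1-\beta_i})$, which by Lemma \ref{lem:eigenvalue} equals $t(m_\lambda-m_\emptyset)$. This is the first assertion. Differentiating the resulting one-parameter relation at $t=0$ (the derivative exists because $p_{(\alpha|\beta)}$ is then an eigenvector and the span of such vectors is a core for $d\Lambda_K(B)$) gives $d\Lambda_K(B)p_{(\alpha|\beta)}=(m_\lambda-m_\emptyset)p_{(\alpha|\beta)}$; alternatively one may invoke \eqref{eq:qf_generator} directly.

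The bulk of the work is bookkeeping with the Wick ordering and the sign $(-1)^n$, which is routine. The one point requiring genuine care is verifying that the $\mu$-dependence truly cancels: the creation slots (indices $\geq N$) contribute $+\mu$ and the annihilation slots (indices $<N$) contribute $-\mu$, and these occur in equal number $n$ precisely because a Frobenius coordinate $(\alpha|\beta)$ has as many $\alpha$-parts as $\beta$-parts. Once that balance is observed, the reduction to Lemma \ref{lem:eigenvalue} is the whole content, so I do not anticipate a substantive obstacle beyond this verification.
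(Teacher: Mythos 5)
Your proposal is correct and follows essentially the same route as the paper: express $p_{(\alpha|\beta)}$ as a Wick-ordered monomial, apply the intertwining identity \eqref{eq:qf_map} with $T=e^{tB}$, read off the eigenvalues $e^{\pm t(m_n+\mu)}$ slot by slot, observe the cancellation of $\mu$, and reduce to Lemma \ref{lem:eigenvalue}; the generator statement is then immediate. No gaps.
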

\begin{proof}
    The latter assertion follows from the former. By Equation \eqref{eq:qf_map} and Lemma \ref{lem:eigenvalue}, we have
    \begin{align*}
         & \Lambda_K(e^{tB})p_{(\alpha|\beta)}                                                                                                         \\
         & =(-1)^n\rho_K(:a^*(e^{tB}p_{\alpha_1+N})\cdots a^*(e^{tB}p_{\alpha_n+N})a(e^{tB}p_{N-1-\beta_1})\cdots a(e^{tB}p_{N-1-\beta_n}):_K)\Omega_K \\
         & =\exp\left(t\sum_{i=1}^N((m_{\alpha_i+N}+\mu)-(m_{N-1-\beta_i}+\mu))\right)p_{(\alpha|\beta)}                                               \\
         & =\exp(t(m_\lambda-m_\emptyset))p_{(\alpha|\beta)}.
    \end{align*}
\end{proof}

Now we can compare two infinitesimal generators $d\Gamma_K(D)$ and $d\Lambda_K(B)$ of two dynamics $(\Gamma_K(e^{\mathrm{i}t D}))_{t\geq0}$ and $(\Lambda_K(e^{tB}))_{t\geq0}$.
\begin{corollary}\label{cor:love}
    The operator $d\Gamma_K(D)$ is equal to $d\Lambda_K(B)$.
\end{corollary}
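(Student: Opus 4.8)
The plan is to show that the two self-adjoint operators $d\Gamma_K(D)$ and $d\Lambda_K(B)$ agree by checking that they coincide on a common core and then invoking the uniqueness of self-adjoint extensions. The natural core is the linear span of the vectors $p_{(\alpha|\beta)}$ from Lemma \ref{lem:first_isomorphism}: by Lemma \ref{lem:core} this span is a core for $d\Gamma_K(D)$, and by the discussion preceding Proposition \ref{prop:markov_semigroup} (using that $p_{(\alpha|\beta)}$ can be rewritten as a Wick-ordered expression up to a sign $(-1)^n$, together with Proposition \ref{prop:constract_semigroup}(2)) the same span is a core for $d\Lambda_K(B)$. Having a single core that works for both generators is the conceptual heart of the argument.

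First I would record the eigenvalue computations already established. By Proposition \ref{prop:capsule} (equivalently, by Equation \eqref{eq:eigenvector} together with Lemma \ref{lem:eigenvalue}), the unitary dynamics gives
\[
    d\Gamma_K(D)\,p_{(\alpha|\beta)} = (m_\lambda - m_\emptyset)\,p_{(\alpha|\beta)},
\]
where $\lambda$ is the partition with Frobenius coordinate $(\alpha|\beta)$. On the other hand, Proposition \ref{prop:markov_semigroup} gives exactly
\[
    d\Lambda_K(B)\,p_{(\alpha|\beta)} = (m_\lambda - m_\emptyset)\,p_{(\alpha|\beta)}.
\]
Thus the two operators act identically, with the same eigenvalue $m_\lambda - m_\emptyset$, on every basis vector $p_{(\alpha|\beta)}$, and hence agree on their common linear span by linearity.

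Finally I would upgrade this pointwise agreement on the core to equality of the operators. Since $D$ is self-adjoint (Proposition \ref{prop:compact_resolvent}), $d\Gamma_K(D)$ is self-adjoint as the generator of the strongly continuous unitary group $(\Gamma_K(U_t))_{t\in\mathbb{R}}$; because the span of the $p_{(\alpha|\beta)}$ is a core for it, $d\Gamma_K(D)$ is the closure of its restriction to that span. The same span is a core for $d\Lambda_K(B)$, so $d\Lambda_K(B)$ is likewise the closure of its restriction there. As the two restrictions coincide, their closures coincide, i.e. $d\Gamma_K(D) = d\Lambda_K(B)$. The one point that deserves care—and which I expect to be the only genuine obstacle—is verifying that the common span really is a core for \emph{both} generators simultaneously and that the closures may be compared directly; this rests on the invariance of the span under both one-parameter families, which is precisely what Propositions \ref{prop:second_quantization} and \ref{prop:constract_semigroup} supply. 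Everything else is the bookkeeping of matching the already-computed eigenvalues.
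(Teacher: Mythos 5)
Your proposal is correct and follows essentially the same route as the paper: both arguments show that $d\Gamma_K(D)$ and $d\Lambda_K(B)$ act with the common eigenvalue $m_\lambda-m_\emptyset$ on each $p_{(\alpha|\beta)}$ (via Proposition \ref{prop:capsule} and Proposition \ref{prop:markov_semigroup}) and then conclude equality because the linear span of these vectors is a core for both generators. The extra care you take in justifying that this span is simultaneously a core for both operators (Lemma \ref{lem:core} on one side, the Wick-ordering identity plus Proposition \ref{prop:constract_semigroup}(2) on the other) is exactly what the paper relies on implicitly.
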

\begin{proof}
    By Propositions \ref{prop:capsule} \ref{prop:markov_semigroup}, two operators $d\Gamma_K(D)$ and $d\Lambda_K(B)$ are the same on the linear span of the $p_{(\alpha|\beta)}$ for all $(\alpha|\beta)=(\alpha_1>\cdots >\alpha_n|\beta_1>\cdots >\beta_n)$. Moreover, this is cores for $d\Gamma_K(D)$ and $d\Lambda_K(B)$, and hence $d\Gamma_K(D)=d\Lambda_K(B)$.
\end{proof}

Let us recall that there exists an isomorphism $\mathcal{H}_K\cong L^2(\mathfrak{X}^{(N)}, M_{w, N})$ assigning $p_{(\alpha|\beta)}$ to $F_\lambda$ given in Definition \ref{def:second_isomorphism}, where $\lambda$ is a partition with Frobenius coordinate $(\alpha|\beta)$. Under the isomorphism, the semigroup $(\Lambda_K(e^{tB}))_{t\geq0}$ induces a semigroup of contraction operators on $L^2(\mathfrak{X}^{(N)}, M_{w, N})$. In what follows, it is denoted by the same symbol $(\Lambda_K(e^{tB}))_{t\geq0}$.

In the rest of the section, we determine kernels corresponding to $\Lambda_K(e^{tB})$. Let $P_t$ be a kernel on $\mathfrak{X}^{(N)}\times \mathfrak{X}^{(N)}$ such that
\[[\Lambda_K(e^{tB})f](\underline{x})=\sum_{\underline{y}\in \mathfrak{X}^{(N)}}P_t(\underline{x}, \underline{y})f(\underline{y})\]
for any $f\in L^2(\mathfrak{X}^{(N)}, M_{w, N})$. Since $(\Lambda_K(e^{tB}))_{t\geq0}$ is a semigroup, the kernels $(P_t)_{t\geq0}$ have a semigroup property, that is, for any $t, s\geq0$ we have
\[\sum_{\underline{y}\in\mathfrak{X}^{(N)}}P_t(\underline{x}, \underline{y})P_s(\underline{y}, \underline{z})=P_{t+s}(\underline{x}, \underline{z})\quad (\underline{x}, \underline{z}\in\mathfrak{X}^{(N)}).\]
We recall that $F_\emptyset=\mathbbm{1}$, where $\mathbbm{1}$ is the constant function equal to 1 (see Example \ref{ex:constant_one}). Thus, by Proposition \ref{prop:markov_semigroup}, we have $\Lambda_K(e^{tB})\mathbbm{1}=e^{t(m_\emptyset-m_\emptyset)}\mathbbm{1}=\mathbbm{1}$ for all $t\geq0$. Hence, for all $t\geq 0$ we have
\[\sum_{\underline{y}\in \mathfrak{X}^{(N)}}P_t(\underline{x}, \underline{y})=1\quad (\underline{x}\in \mathfrak{X}^{(N)}).\]

Moreover, we obtain the following explicit formula for the kernels $P_t$ ($t\geq0$).
\begin{theorem}\label{thm:formula_kernel}
    For any $\underline{x}, \underline{y}\in \mathfrak{X}^{(N)}$ and $t\geq 0$ we have
    \begin{equation}\label{eq:transition_prob}
        P_t(\underline{x}, \underline{y})=e^{-tm_\emptyset}\frac{V_N(\underline{y})}{V_N(\underline{x})}\det\left[ \frac{\langle e^{tD}\delta_{y_i}, \delta_{x_j}\rangle}{\langle \delta_{x_j}, \delta_{x_j}\rangle} \right]_{i, j=1}^N,
    \end{equation}
    where $V_N(\underline{x})=\prod_{1\leq i<j\leq N}(x_i-x_j)$ if $\underline{x}=\{x_1<\cdots <x_N\}$. In particular, $(P_t)_{t\geq0}$ forms a Markov semigroup if $D$ is an infinitesimal generator of birth death process.
\end{theorem}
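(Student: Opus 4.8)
The plan is to transport the semigroup $(\Lambda_K(e^{tB}))_{t\geq 0}$ through the unitary $V=F\circ v$ onto $L^2(\mathfrak{X}^{(N)},M_{w,N})$, read off its matrix entries, and then deduce the Markov property from a non-colliding (Karlin--McGregor) interpretation of the resulting determinant.

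First I would identify $\Lambda_K(e^{tB})$ as a scalar multiple of an exterior power. By Lemma~\ref{lem:first_isomorphism} the unitary $v$ carries $p_{(\alpha|\beta)}$ to $p_\lambda=p_{\lambda_1+N-1}\wedge\cdots\wedge p_{\lambda_N}$, and these form an orthonormal basis of $\bigwedge^N\ell^2(\mathfrak{X})$ as $\lambda$ ranges over partitions of length $\leq N$. Proposition~\ref{prop:markov_semigroup} gives $\Lambda_K(e^{tB})p_{(\alpha|\beta)}=e^{t(m_\lambda-m_\emptyset)}p_{(\alpha|\beta)}$, while the eigenrelation $Dp_n=m_np_n$ yields $(e^{tD}p_{\lambda_1+N-1})\wedge\cdots\wedge(e^{tD}p_{\lambda_N})=e^{tm_\lambda}p_\lambda$ with $m_\lambda=\sum_{i=1}^N m_{\lambda_i+N-i}$; that is, the $N$-th exterior power of $e^{tD}$ scales $p_\lambda$ by $e^{tm_\lambda}$. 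Since the left-hand operator is a contraction by Proposition~\ref{prop:constract_semigroup} and both agree on the basis $\{p_\lambda\}$, I obtain the operator identity $v\,\Lambda_K(e^{tB})\,v^{*}=e^{-tm_\emptyset}\,{\textstyle\bigwedge^N}e^{tD}$. As a consistency check, the top $N$ singular values of $e^{tD}$ multiply to $e^{t(m_0+\cdots+m_{N-1})}=e^{tm_\emptyset}$, so the right-hand side is indeed a contraction.

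Next I would push this through $F$ of Lemma~\ref{lem:second_isomorphism}. Since $F$ sends $h_1\wedge\cdots\wedge h_N$ to $F_{h_1,\dots,h_N}$, the operator $V\Lambda_K(e^{tB})V^{*}$ acts by $F_{h_1,\dots,h_N}\mapsto e^{-tm_\emptyset}F_{e^{tD}h_1,\dots,e^{tD}h_N}$. To extract the kernel I apply this to the indicator $\mathbbm{1}_{\{\underline y\}}$, which by the explicit form of $F_{e_{y_1},\dots,e_{y_N}}$ (computed right after Example~\ref{ex:constant_one}) equals $Z_{w,N}^{-1/2}V_N(\underline y)\prod_j w(y_j)^{1/2}\,F_{e_{y_1},\dots,e_{y_N}}$. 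Evaluating $F_{e^{tD}e_{y_1},\dots,e^{tD}e_{y_N}}$ at $\underline x$ through Definition~\ref{def:second_isomorphism}, the factors $Z_{w,N}^{1/2}$ cancel and I am left with
\[P_t(\underline x,\underline y)=e^{-tm_\emptyset}\frac{V_N(\underline y)}{V_N(\underline x)}\prod_{j=1}^N\left(\frac{w(y_j)}{w(x_j)}\right)^{1/2}\det\big[(e^{tD}e_{y_i})(x_j)\big]_{i,j=1}^N.\]
Absorbing the weight prefactors into the rows and columns of the determinant and using $D=\sqrt{w}\,\mathcal{D}\sqrt{w}^{-1}$ rewrites each entry as $(e^{t\mathcal{D}}\mathbbm{1}_{\{y_i\}})(x_j)=\langle e^{t\mathcal{D}}\delta_{y_i},\delta_{x_j}\rangle/\langle\delta_{x_j},\delta_{x_j}\rangle$ in $L^2(\mathfrak{X},w)$ with $\delta_x=\mathbbm{1}_{\{x\}}$, which is exactly the stated formula~\eqref{eq:transition_prob}.

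Finally, for the Markov claim the normalization $\sum_{\underline y}P_t(\underline x,\underline y)=1$ is already in hand from $\Lambda_K(e^{tB})\mathbbm{1}=\mathbbm{1}$, so only nonnegativity remains. When $\mathcal{D}$ generates a birth--death process the entries $(e^{t\mathcal{D}}\mathbbm{1}_{\{y_i\}})(x_j)=p_t(x_j,y_i)$ are genuine transition probabilities of a nearest-neighbour process on $\mathfrak{X}\subset\mathbb{R}$, so $\det[p_t(x_i,y_j)]$ is the Karlin--McGregor non-colliding transition probability and hence nonnegative, while $V_N(\underline y)/V_N(\underline x)>0$ for the increasingly ordered tuples. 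I expect the main obstacle to be the prefactor bookkeeping: keeping the $V_N$, $Z_{w,N}$ and $w^{1/2}$ factors consistent when inverting the indicator identity and matching them against the $\langle\delta_{x_j},\delta_{x_j}\rangle$ normalization, together with pinning down that the inner products live in $L^2(\mathfrak{X},w)$ so that the matrix entries are honest transition probabilities; the positivity itself rests on the classical Karlin--McGregor formula rather than on the operator-algebraic machinery.
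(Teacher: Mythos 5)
Your proposal is correct, and it arrives at the same formula by a route that differs from the paper's in its mechanics. The paper computes $P_t(\underline{x},\underline{y})=\langle \Lambda_K(e^{tB})\delta_{\underline{y}},\delta_{\underline{x}}\rangle/\langle\delta_{\underline{x}},\delta_{\underline{x}}\rangle$ by expanding $\delta_{\underline{y}}$ in the eigenbasis $(F_\lambda)$, applying Proposition \ref{prop:markov_semigroup}, and then resumming the resulting series $\sum_\lambda e^{tm_\lambda}\det[p_{\lambda_i+N-i}(x_j)]\det[p_{\lambda_i+N-i}(y_j)]$ into a single determinant via a Cauchy--Binet type identity; you instead promote the eigenvalue computation to the operator identity $v\,\Lambda_K(e^{tB})\,v^{*}=e^{-tm_\emptyset}\bigwedge^{N}e^{tD}$ (both sides bounded and agreeing on the orthonormal basis $\{p_\lambda\}$) and then read the kernel off the explicit determinantal form of $F$ applied to indicator functions. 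These are two presentations of the same underlying fact --- Cauchy--Binet is exactly the statement that matrix entries of $\bigwedge^N e^{tD}$ are $N\times N$ determinants of entries of $e^{tD}$ --- but your version buys something: it sidesteps the interchange of the infinite sum over partitions with the determinant, which the paper leaves unjustified, and it makes the contraction property of the transported semigroup transparent via the singular values of $e^{tD}$. Your bookkeeping of the $Z_{w,N}$, $V_N$ and $w^{1/2}$ factors checks out and reduces the determinant entries to $p_t(x_j,y_i)=[e^{t\mathcal{D}}\mathbbm{1}_{\{y_i\}}](x_j)$, which is in fact a more careful reading of the $\delta$-notation in \eqref{eq:transition_prob} than the paper gives (the generator of the birth--death process is $\mathcal{D}$ on $L^2(\mathfrak{X},w)$, not $D$ on $\ell^2(\mathfrak{X})$, and your conjugation by $\sqrt{w}$ makes the entries honest transition probabilities). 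The Markov-property argument via Karlin--McGregor and the normalization $\Lambda_K(e^{tB})\mathbbm{1}=\mathbbm{1}$ coincides with the paper's.
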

\begin{proof}
    First, we prove Equation \eqref{eq:transition_prob}. By definition, $P_t(\underline{x}, \underline{y})=\langle \Lambda_K(e^{tB})\delta_{\underline{y}}, \delta_{\underline{x}}\rangle/\langle\delta_{\underline{x}}, \delta_{\underline{x}}\rangle$ holds, and hence
    \begin{align*}
        P_t(\underline{x}, \underline{y})
         & =M_{w, N}(\underline{x})^{-1}\langle P_t\delta_{\underline{y}}, \delta_{\underline{x}}\rangle                                                                                                                            \\
         & =M_{w, N}(\underline{x})^{-1}\sum_{\lambda=(\lambda_1\geq \cdots \geq \lambda_N)}\langle \delta_{\underline{y}}, F_\lambda \rangle \langle \Lambda_K(e^{tB})F_\lambda, \delta_{\underline{x}}\rangle                     \\
         & =\sum_{\lambda=(\lambda_1\geq \cdots \geq \lambda_N)}e^{t(m_\lambda-m_\emptyset)}F_\lambda(\underline{y})M_{w, N}(\underline{y}) F_\lambda(\underline{x})                                                                \\
         & =e^{-tm_\emptyset}\frac{V_N(\underline{y})}{V_N(\underline{x})}\prod_{j=1}^N\frac{w(y_j)^{1/2}}{w(x_j)^{1/2}}\sum_{\lambda}e^{tm_\lambda}\det[p_{\lambda_i+N-i}(x_j)]_{i, j=1}^N\det[p_{\lambda_i+N-i}(y_j)]_{i, j=1}^N.
    \end{align*}
    Moreover, we have
    \begin{align*}
         & \sum_{\lambda_1\geq \cdots \geq \lambda_N\geq 0}e^{tm_\lambda}\det[p_{\lambda_i+N-i}(x_j)]_{i, j=1}^N\det[p_{\lambda_i+N-i}(y_j)]_{i, j=1}^N                       \\
         & =\frac{1}{N!}\sum_{l_1, \cdots, l_N\geq0}e^{t\sum_{i=1}^Nm_{l_i}}\det[p_{l_i}(x_j)]_{i, j=1}^N\det[p_{l_i}(y_j)]_{i, j=1}^N                                        \\
         & =\det\left[\sum_{l=0}^\infty e^{tm_l}p_l(x_j)p_l(y_i)\right]_{i, j=1}^\infty                                                                                       \\
         & =\det\left[\frac{\langle e^{tD}\delta_{y_i}, \delta_{x_j}\rangle}{\langle \delta_{x_j}, \delta_{x_j}\rangle} \frac{w(x_i)^{1/2}}{w(y_j)^{1/2}} \right]_{i, j=1}^N.
    \end{align*}
    Namely, we obtain Equation \eqref{eq:transition_prob}. The second assertion follows from the Karlin--McGregor formula. Indeed, the determinant in Equation \eqref{eq:transition_prob} is positive. Moreover, it is equal to the probability that $N$ independent birth death processes generated by $D$ start from $x_1<\cdots <x_N$ and reach $y_1<\cdots <y_N$ at time $t$ without coincident of any two processes of them. See \cite{KM57,KM59}. Hence, $(P_t)_{t\geq 0}$ forms a Markov semigroup.
\end{proof}

For instance, it is known that the Meixner and Charlier difference operators (in Examples \ref{ex:Meixner}, \ref{ex:Charlier}) are infinitesimal generators of birth deal process on $\mathbb{Z}_{\geq0}$ (see e.g., \cite[Corollary 6.6]{BO13}). Thus, by Theorem \ref{thm:formula_kernel}, the corresponding dynamics $(\Lambda_K(e^{tB})_{t\geq0})$ give Markov semigroups on $\mathbb{Z}_{\geq0}$. Moreover, the corresponding orthogonal polynomial ensembles are invariant probability measure of these Markov semigroups. Let $\mathbb{Y}_{\leq N}$ denote the set of partitions with length $\leq N$. We remark that there exists a bijection $\mathbb{Y}_{\leq N}\cong \mathbb{Z}_{\geq 0}^{(N)}$ ($\cong \mathcal{C}_N(\mathbb{Z}_{\geq 0})$) given as $\lambda\leftrightarrow (\lambda_i+N-i)_{i=1}^N$. Thus, Markov semigroups on $\mathbb{Z}_{\geq 0}^{(N)}$ induce Markov semigroups on partitions, and such stochastic dynamics related to Meixner and Charlier polynomials are investigated in \cite{BO06,Olshanski11,Olshanski12, BO13,Petrov13}, etc. In the paper, we have established an operator algebraic understanding of such stochastic dynamics.

\subsection{The hypergeometric difference operator}\label{subsect:stoc_hypergeometric}
Here we discuss a stochastic dynamics generated by the hypergeometric difference operator $D_{z, z', \xi}$ in Example \ref{ex:z-measure}, where the pair $z, z'\in\mathbb{C}$ is principal or complementary and $\xi\in (0, 1)$. Let us recall that $\mathbb{Z}'=\mathbb{Z}+1/2$ and there exists an orthonormal basis $(\psi_a)_{a\in\mathbb{Z}'}$ such that $D_{z, z', \xi}\psi_a=(1-\xi)a\psi_a$ for every $a\in\mathbb{Z}'$. The kernel $K_{z, z', \xi}$ is defined as the same as Section \ref{sec:hgdo}, and it is a correlation kernel of the $z$-measure $M_{z, z', \xi}$. As mentioned in Section \ref{sec:hgdo}, the integral operator $K$ corresponding to $K_{z, z', \xi}$ is the orthogonal projection onto $\overline{\mathrm{span}}\{\psi_a\mid a\in\mathbb{Z}'_+\}$. Following the previous discussion, we denote by $\varphi_K$ and $(\rho_K, \mathcal{H}_K, \Omega_K)$ the corresponding quasi-free state of the GICAR algebra $\mathfrak{I}(\mathbb{Z}'):=\mathfrak{I}(\ell^2(\mathbb{Z}'))$ and the associated GNS-triple, respectively.

In Section \ref{sec:hgdo}, we gave an isomorphism $\mathcal{H}_K\cong L^2(\mathbb{Y}, M_{z, z', \xi})$ by  fixing an orthonormal basis for $L^2(\mathbb{Y}, M_{z, z', \xi})$ labeled by partitions $\mathbb{Y}$. Here we fix such an orthonormal basis concretely. First, for any $\mu\in \mathbb{Y}$ we define a function $FS_\mu$ on $\mathbb{Y}$ by
\[FS_\mu(\lambda):=\begin{cases}|\lambda|^{\downarrow |\mu|}\frac{\dim \lambda/\mu}{\dim \lambda}& \text{if }\mu\subseteq\lambda, \\ 0&\text{otherwise},\end{cases}\]
where $l^{\downarrow m}:=l(l-1)\cdots (l-m+1)$ and $\dim \lambda/\mu$ is the number of standard tableaux of shape $\lambda/\mu$. We remark that $\dim \lambda/\mu$ is well defined only if $\mu\subseteq\lambda$, that is, $\mu_i\leq \lambda_i$ for all $i=1, 2, \dots$. Moreover, we define a function $\mathfrak{M}_\lambda$ ($\lambda\in\mathbb{Y}$) by
\[\mathfrak{M}_\lambda:=\sum_{\mu\subseteq \lambda}\left(\frac{-\xi}{1-\xi}\right)^{|\lambda|-|\mu|}\frac{\dim\lambda/\mu}{(|\lambda|-|\mu|)!}(z)_{\lambda/\mu}(z')_{\lambda/\mu}FS_\mu,\]
where $(z)_{\lambda/\mu}:=\prod_{i=1}^l\prod_{j=\mu_i+1}^{\lambda_i}(z+j-i)$ when $l$ is the length of $\lambda$. By definition, if $\mu=\emptyset$, then $(z)_{\lambda/\emptyset}=(z)_\lambda$ holds. By \cite[Theorem 5.27(ii)]{Olshanski12}, these functions $(\mathfrak{M}_\lambda)_{\lambda\in\mathbb{Y}}$ form a \emph{orthogonal} basis for $L^2(\mathbb{Y}, M_{z, z', \xi})$. Using this basis, we define an isomorphism $\mathcal{H}_K\cong L^2(\mathbb{Y}, M_{z, z',\xi})$ assigning $\psi_{\alpha_1, \dots, \alpha_n; \beta_1, \dots, \beta_n}$ to $\mathfrak{M}_\lambda/\|\mathfrak{M}_\lambda\|_{L^2(\mathbb{Y}, M_{z, z', \xi})}$ for any $\alpha_1>\cdots >\alpha_n\geq 1/2$ and $\beta_1>\cdots >\beta_n\geq 1/2$, where $\lambda$ is a partition with modified Frobenius coordinate $(\alpha_1, \dots, \alpha| \beta_1, \dots, \beta_n)$.

Similar to the previous case, for any $\alpha_1>\cdots >\alpha_n\geq 1/2$ and $\beta_1>\cdots >\beta_n\geq1/2$ we have
\begin{align*}
    \psi_{\alpha_1, \dots, \alpha_n; \beta_1, \dots, \beta_n}
     & =\rho_K(a^*(\psi_{-\alpha_1})\cdots a^*(\psi_{-\alpha_n})a(\psi_{\beta_1})\cdots a(\psi_{\beta_n}))\Omega_K            \\
     & =(-1)^n\rho_K(:a^*(\psi_{-\alpha_1})\cdots a^*(\psi_{-\alpha_n})a(\psi_{\beta_1})\cdots a(\psi_{\beta_n}):_K)\Omega_K.
\end{align*}
Thus, by Proposition \ref{prop:constract_semigroup}(2), the linear span of them is a core for $d\Lambda_K(B)$.

Let $B_{z, z', \xi}:=(1-\xi)^{-1}(D_{z, z', \xi}(1-K)-D_{z, z', \xi}K)$. Namely, for any $a\in\mathbb{Z}'$ we have
\[B_{z, z', \xi}\psi_a=\begin{cases}-a\psi_a & (a\geq 1/2), \\ a\psi_a &(a \leq 1/2),\end{cases}\]
and hence $B_{z, z', \xi}$ is self-adjoint and negative semidefinite. Thus, \cite[Corollary in Secion X.8]{RS:book2}, $B_{z, z', \xi}$ generates a strongly continuous semigroup $(e^{tB_{z, z', \xi}})_{t\geq0}$ of contractions on $\ell^2(\mathbb{Z}')$, and it commutes with $K$. By Proposition \ref{prop:constract_semigroup}(1) and the above isomorphism $\mathcal{H}_K\cong L^2(\mathbb{Y}, M_{z, z',\xi})$, we obtain a semigroup $(\Lambda_K(e^{tB}))_{t\geq0}$ of contraction operators on $L^2(\mathbb{Y}, M_{z, z', \xi})$. Let $d\Lambda_K(B_{z, z', \xi})$ denote its infinitesimal generator. Similar to Proposition \ref{prop:markov_semigroup}, the following holds true:
\begin{proposition}\label{prop:will}
    For any $t\geq 0$ and $\lambda\in\mathbb{Y}$ we have
    \[\Lambda_K(e^{tB_{z, z', \xi}})\mathfrak{M}_\lambda=e^{-t|\lambda|}\mathfrak{M}_\lambda, \quad d\Lambda_K(B_{z, z', \xi})\mathfrak{M}_\lambda=-|\lambda|\mathfrak{M}_\lambda.\]
\end{proposition}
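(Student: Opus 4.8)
The plan is to mirror the proof of Proposition \ref{prop:markov_semigroup}, carrying out the computation at the level of the GNS space $\mathcal{H}_K$ and then transporting the result across the isomorphism $\mathcal{H}_K\cong L^2(\mathbb{Y}, M_{z, z', \xi})$ fixed above. Since the generator identity $d\Lambda_K(B_{z, z', \xi})\mathfrak{M}_\lambda=-|\lambda|\mathfrak{M}_\lambda$ follows from the semigroup identity by differentiating at $t=0$, it suffices to establish $\Lambda_K(e^{tB_{z, z', \xi}})\mathfrak{M}_\lambda=e^{-t|\lambda|}\mathfrak{M}_\lambda$.

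First I would start from the Wick-ordered expression
\[\psi_{\alpha_1, \dots, \alpha_n; \beta_1, \dots, \beta_n}=(-1)^n\rho_K(:a^*(\psi_{-\alpha_1})\cdots a^*(\psi_{-\alpha_n})a(\psi_{\beta_1})\cdots a(\psi_{\beta_n}):_K)\Omega_K\]
displayed just before the statement. Because $e^{tB_{z, z', \xi}}$ commutes with $K$, I can combine the intertwining relation $\Lambda_K(e^{tB})\rho_K(x)\Omega_K=\rho_K(a_K(e^{tB})(x))\Omega_K$ from Proposition \ref{prop:constract_semigroup}(1) with the transformation rule for Wick products in Equation \eqref{eq:qf_map} to obtain
\[\Lambda_K(e^{tB})\psi_{\alpha_1, \dots, \alpha_n; \beta_1, \dots, \beta_n}=(-1)^n\rho_K(:a^*(e^{tB}\psi_{-\alpha_1})\cdots a(e^{tB}\psi_{\beta_n}):_K)\Omega_K.\]
Next I would apply $e^{tB}$ to each $\psi_a$ via the eigenvalue formula for $B_{z, z', \xi}$: the creation slots carry indices $-\alpha_j\leq-1/2$, on which $B_{z, z', \xi}$ acts by $a\mapsto a$, while the annihilation slots carry indices $\beta_j\geq1/2$, on which it acts by $a\mapsto-a$; in both cases the scalar produced is $e^{-t\alpha_j}$ or $e^{-t\beta_j}$, respectively. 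Pulling these out gives the factor $\exp(-t\sum_{j=1}^n(\alpha_j+\beta_j))$, and since $(\alpha_1, \dots, \alpha_n|\beta_1, \dots, \beta_n)$ is the modified Frobenius coordinate of $\lambda$ we have $|\lambda|=\sum_{j=1}^n(\alpha_j+\beta_j)$ (as recorded in the proof of Proposition \ref{prop:ocha}). Hence $\Lambda_K(e^{tB})\psi_{\alpha_1, \dots, \alpha_n; \beta_1, \dots, \beta_n}=e^{-t|\lambda|}\psi_{\alpha_1, \dots, \alpha_n; \beta_1, \dots, \beta_n}$, and transporting this identity through the isomorphism that sends $\psi_{\alpha_1, \dots, \alpha_n; \beta_1, \dots, \beta_n}$ to $\mathfrak{M}_\lambda/\|\mathfrak{M}_\lambda\|_{L^2(\mathbb{Y}, M_{z, z', \xi})}$ yields the claimed eigenrelation for $\mathfrak{M}_\lambda$.

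I do not expect a serious obstacle here: the argument is essentially a transcription of Proposition \ref{prop:markov_semigroup} into the $z$-measure setting, with the ordinary Frobenius coordinate replaced by its modified analogue. The only point demanding a moment's care is the sign bookkeeping in the eigenvalue step — verifying that the creation slots (negative indices) and the annihilation slots (positive indices) both produce negative exponents, so that the total exponent is exactly $-t|\lambda|$ — but this is immediate from the two-branch formula for $B_{z, z', \xi}\psi_a$. The fact that $(\mathfrak{M}_\lambda)_{\lambda\in\mathbb{Y}}$ is only an orthogonal, rather than orthonormal, basis is harmless, since rescaling a vector affects neither whether it is an eigenvector nor its eigenvalue.
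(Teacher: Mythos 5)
Your argument is correct and is precisely the one the paper intends: Proposition \ref{prop:will} is stated without proof as being ``similar to Proposition \ref{prop:markov_semigroup}'', and your transcription --- Wick-ordered form of $\psi_{\alpha_1,\dots,\alpha_n;\beta_1,\dots,\beta_n}$, Equation \eqref{eq:qf_map}, the eigenvalue formula for $B_{z,z',\xi}$ on both branches, and $|\lambda|=\sum_j(\alpha_j+\beta_j)$ for the modified Frobenius coordinates --- is exactly that omitted argument. Your sign bookkeeping (eigenvalue $-\alpha_j$ on the creation slots and $-\beta_j$ on the annihilation slots) and the remark that rescaling by $\|\mathfrak{M}_\lambda\|$ is harmless are both right.
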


The following is a consequence of Propositions \ref{prop:ocha}, \ref{prop:will} (see also Corollary \ref{cor:love})
\begin{corollary}
    The operator $d\Gamma_K(D_{z, z', \xi})$ is equal to $d\Lambda_K(B_{z, z', \xi})$.
\end{corollary}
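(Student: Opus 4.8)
The plan is to follow the template of Corollary \ref{cor:love}: produce a single dense domain that is simultaneously a core for both $d\Gamma_K(D_{z,z',\xi})$ and $d\Lambda_K(B_{z,z',\xi})$, check that the two generators act identically there, and then conclude from the fact that two closed operators agreeing on a common core must coincide. The natural candidate for this domain is the linear span of the vectors $\psi_{\alpha_1,\dots,\alpha_n;\beta_1,\dots,\beta_n}$ (abbreviated $\psi_{\alpha;\beta}$) introduced in Proposition \ref{prop:ocha}.

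First I would record that this span is indeed a common core. For the unitary generator this is the content of Proposition \ref{prop:ocha}. For the stochastic generator it follows from the Wick-ordered rewriting
\[\psi_{\alpha_1,\dots,\alpha_n;\beta_1,\dots,\beta_n}=(-1)^n\rho_K(:a^*(\psi_{-\alpha_1})\cdots a^*(\psi_{-\alpha_n})a(\psi_{\beta_1})\cdots a(\psi_{\beta_n}):_K)\Omega_K\]
together with Proposition \ref{prop:constract_semigroup}(2), exactly as noted just before Proposition \ref{prop:will}. Thus both closed operators are determined by their restrictions to this span, and it suffices to compare them there.

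The heart of the argument is to compare the two restrictions on a single $\psi_{\alpha;\beta}$. On the unitary side Proposition \ref{prop:ocha} already presents $d\Gamma_K(D_{z,z',\xi})\psi_{\alpha;\beta}$ as an explicit scalar multiple. On the stochastic side I would transport Proposition \ref{prop:will} back to $\mathcal{H}_K$ through the fixed isomorphism $\mathcal{H}_K\cong L^2(\mathbb{Y},M_{z,z',\xi})$ sending $\psi_{\alpha;\beta}$ to $\mathfrak{M}_\lambda/\|\mathfrak{M}_\lambda\|$, so that $d\Lambda_K(B_{z,z',\xi})$ acts on $\psi_{\alpha;\beta}$ by the scalar attached to $\mathfrak{M}_\lambda$. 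The key step, which I expect to be the main obstacle, is to verify that these two scalars coincide for every partition $\lambda$. This is a bookkeeping computation in which one tracks how $B_{z,z',\xi}=(1-\xi)^{-1}D_{z,z',\xi}(1-2K)$ reverses the sign of the $D_{z,z',\xi}$-eigenvalue on $\mathrm{Ran}(K)$ and rescales by $(1-\xi)^{-1}$, how the antilinearity of the annihilation operators feeds into the generator identity \eqref{eq:qf_generator}, and how the creation data $-\alpha_i$ and annihilation data $\beta_j$ assemble, via $|\lambda|=\sum_j(\alpha_j+\beta_j)$, into a scalar depending only on $|\lambda|$; making the normalization $(1-\xi)^{-1}$ and the sign reversal line up is the only delicate point. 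Once the scalars are matched on each $\psi_{\alpha;\beta}$, the two generators agree on the common core, and since each is closed—being the generator of a strongly continuous unitary group and of a strongly continuous contraction semigroup, respectively—they are equal.
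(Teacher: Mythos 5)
Your overall strategy is exactly the paper's: the corollary is derived from Propositions \ref{prop:ocha} and \ref{prop:will} by comparing the two generators on the common core spanned by the $\psi_{\alpha_1,\dots,\alpha_n;\beta_1,\dots,\beta_n}$, using the Wick-ordered rewriting together with Proposition \ref{prop:constract_semigroup}(2) for the stochastic side, just as in Corollary \ref{cor:love}. So the architecture of your argument is the intended one.

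The gap is in the step you defer as ``bookkeeping.'' You must check that the scalar from Proposition \ref{prop:ocha} equals the scalar from Proposition \ref{prop:will} on the vector attached to a given $\lambda$, and with the definitions as stated this check does not close: Proposition \ref{prop:ocha} gives
\[
d\Gamma_K(D_{z,z',\xi})\psi_{\alpha_1,\dots,\alpha_n;\beta_1,\dots,\beta_n}=-(1-\xi)|\lambda|\,\psi_{\alpha_1,\dots,\alpha_n;\beta_1,\dots,\beta_n},
\]
whereas Proposition \ref{prop:will} gives $d\Lambda_K(B_{z,z',\xi})\mathfrak{M}_\lambda=-|\lambda|\,\mathfrak{M}_\lambda$. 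The prefactor $(1-\xi)^{-1}$ built into $B_{z,z',\xi}:=(1-\xi)^{-1}\bigl(D_{z,z',\xi}(1-K)-D_{z,z',\xi}K\bigr)$ strips the factor $(1-\xi)$ from the eigenvalues $(1-\xi)a$ of $D_{z,z',\xi}$, so the two generators agree only up to an overall factor $(1-\xi)$. (Contrast Corollary \ref{cor:love}, where $B_\mu$ carries no such normalization and the eigenvalues $m_\lambda-m_\emptyset$ genuinely coincide on both sides.) Hence either the identity holds only in the form $d\Gamma_K(D_{z,z',\xi})=(1-\xi)\,d\Lambda_K(B_{z,z',\xi})$, or the normalization in the definition of $B_{z,z',\xi}$ must be dropped; your proposal cannot be completed as written without resolving this, and observing that both eigenvalues ``depend only on $|\lambda|$'' is not enough --- they must be the \emph{same} function of $|\lambda|$.
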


By results in \cite{BO13}, the semigroup $(\Lambda_K(e^{t B_{z, z', \xi}}))_{t \geq 0}$ has a stochastic interpretation, and it is a reason why we chose a basis $(\mathfrak{M}_\lambda)_{\lambda\in \mathbb{Y}}$ for $L^2(\mathbb{Y}, M_{z, z', \xi})$. To explain it, we use the following notation. For two Young diagrams $\lambda, \nu\in\mathbb{Y}$, we write $\lambda\nearrow \nu$ if $\nu$ is obtained by adding one box to $\lambda$. Similarly, we write $\nu\searrow \lambda$ if $\nu$ is obtained by deleting one box from $\lambda$. We define a kernel $Q_{z, z', \xi}$ on $\mathbb{Y}\times \mathbb{Y}$ as follows (see \cite[Equation (8.11)]{BO13}):
\begin{align*}
    (1-\xi)Q_{z, z', \xi}(\lambda, \nu)
    :=\begin{cases}
          \xi (z)_{\nu/\lambda}(z')_{\nu/\lambda}\frac{\dim \nu}{(|\lambda|+1)\dim \lambda} & \lambda\nearrow \nu,  \\
          \frac{|\lambda|\dim \nu}{\dim \nu}                                                & \lambda \searrow \nu, \\
          (1+\xi)|\lambda| + \xi z z'                                                       & \nu = \lambda,        \\
          0                                                                                 & \text{otherwise}.
      \end{cases}
\end{align*}
By \cite[Proposition 8.4]{BO13}, \cite[Proposition 4.24]{Olshanski12}, we have $Q_{z, z', \xi}\mathfrak{M}_\lambda=-|\lambda|\mathfrak{M}_\lambda$. Namely, $Q_{z, z', \xi}$ is equal to $d\Lambda(B_{z, z', \xi})$ on the linear span of the $\mathfrak{M}_\lambda$ ($\lambda\in\mathbb{Y}$). Moreover, by \cite[Corollary 8.7]{BO13}, $Q_{z, z', \xi}$ gives rise to a Markov (Feller) semigroup on $\mathbb{Y}$. In other words, $(\Lambda_K(e^{t B_{z, z', \xi}}))_{t\geq 0}$ preserves the space $C_0(\mathbb{Y})$ of (continuous) functions vanishing at infinity, and it is a Feller semigroup, that is, a strongly continuous, positive, conservative contraction semigroup.

\section{Conclusions}
As we observed in Section \ref{sec:DPPGICAR}, determinantal point processes on discrete spaces are intimately related to quasi-free states on GICAR algebras. In the paper, we extended this relation to encompass dynamical aspects. In particular, we formulated a connection between dynamics on determinantal point processes and dynamics on GICAR algebras. The essential ideas can be summarized as follows: Firstly, for a specific determinantal point process, its correlation kernel is provided from a quasi-free state on a GICAR algebra, and the associated GNS-representation space is isomorphic to the $L^2$-space of the determinantal point process. Secondly, dynamics on a GICAR algebra preserving a quasi-free state induce dynamics on the associated GNS-representation space and the $L^2$-space with respect to the determinantal point process. In Section \ref{sec:unitary_dynamics}, we delve into semigroups of Bogoliubov automorphisms on a GICAR algebra and show that it gives rise to dynamics of unitary operators on the $L^2$-space of a determinantal point process. Furthermore, in Section \ref{sec:stochastic_dynamics}, we explore semigroups of completely positive linear maps on a GICAR algebra and show that it gives rise to stochastic dynamics on determinantal point processes. As Koshida \cite{Koshida21} pointed out, Pfaffian point processes on discrete spaces also relate to quasi-free states on self-dual GICAR algebras. Hence, by the method in the paper, we can establish a framework for constructing dynamics on Pfaffian point processes from dynamics on self-dual GICAR algebras. This will serve as a subject of future research.

In the paper, we dealt with two types of examples of determinantal point processes, which arise from orthogonal polynomials of hypergeometric type and $z$-measures on the set $\mathbb{Y}$ of Young diagrams. Those correlation kernels are given as spectral projections of self-adjoint difference operators, and they produce appropriate dynamics on GICAR algebras and unitary or stochastic dynamics on these determinantal point processes. Moreover, there are other determinantal point processes whose correlation kernels are given as spectral projections of self-adjoint operators. For instance, we did not discuss determinantal point processes related to $q$-orthogonal polynomials in the paper. However, they are interesting subjects in the intersection of probability theory and representation theory, see, e.g., \cite{GO16}.

Determinantal point processes on continuum spaces are also quite interesting subjects and intimately related to operator algebras (see \cite{Lytvynov02,LM07}). Hence, it is only natural to inquire about extending our results to the continuum case. Particularly, when exploring determinantal point processes with a representation-theoretic origin, we frequently delve into the continuum limits of discrete determinantal point processes and their dynamical aspects (see \cite{BO05, BO06, Olshanski08, BO13, GO16}). Therefore, it is an intriguing problem to reveal how such continuum limits, encompassing dynamical aspects, are described within the framework of operator algebras.

\section*{Acknowledgement}
The author gratefully acknowledges comments from Professor Makoto Katori since the early stage of this work. The author is grateful to Professor Gregory Schehr, Professor Tomoyuki Shirai, Professor Takeshi Imamura and Mr. Shinji Koshida for the valuable discussions. This work was supported by JSPS Research Fellowship for Young Scientists PD (KAKENHI Grand Number 22J00573).
}

\end{document}